\numberwithin{equation}{section}
\renewenvironment{proof}%
  {\begin{trivlist}%
   \item[\hskip\labelsep]
   \ignorespaces}%
  {\qed\end{trivlist}}
\newcommand{\E}{\mathbb{E}}
\newtheorem{thm}{Theorem}[section]
\newtheorem{lem}[thm]{Lemma}
\newtheorem{prop}[thm]{Proposition}
\newtheorem{rem}[thm]{Remark}
\newtheorem{assumption}[thm]{Assumption}
\begin{document}
\bibliographystyle{plain}
\title{ Multimodal sampling via Schr\"odinger-F\"ollmer samplers with temperatures
\footnotemark[2] {}}
	\author{
          \ 
	Xiaojie Wang,
    Xiaoyan Zhang
		\\
		\footnotesize School of Mathematics and Statistics,   HNP-LAMA, Central South University, Changsha 410083, Hunan, China
		%
		%
		%
	}

\maketitle
\footnotetext[2]{This work was supported by Natural Science Foundation of China (12471394, 12371417, 12071488) and Hunan Basic Science Research Center for Mathematical Analysis (2024JC2002). 
                 E-mail addresses:
 x.j.wang7@csu.edu.cn, 232103003@csu.edu.cn.
 }

\begin{abstract}
Generating samples from complex and high-dimensional distributions is ubiquitous in various scientific fields of statistical physics, Bayesian inference, scientific computing and machine learning.
Very recently,  Huang et al. (IEEE Trans. Inform. Theory, 2025)  proposed new Schr\"odinger-F\"ollmer samplers (SFS), based on the Euler discretization of the Schr\"odinger-F\"ollmer diffusion evolving on the unit interval $[0, 1]$. 
There, a convergence rate of order $\mathcal{O}(\sqrt{h})$ in the $L^2$-Wasserstein distance was obtained for the Euler discretization with a uniform time step-size $h>0$.
%
%
By introducing a temperature parameter, different samplers are proposed in this paper, based on the Euler discretization of the Schr\"odinger-F\"ollmer process with temperatures.
As revealed by numerical experiments, high temperatures are vital, particularly in sampling from multimodal distributions.
Further, a novel approach of error analysis is developed for the time discretization and an enhanced convergence rate of order $\mathcal{O}(h)$ is obtained in the $L^2$-Wasserstein distance, under certain smoothness conditions on the drift. This significantly improves the existing order-half convergence in the aforementioned paper.
Unlike Langevin samplers, SFS is gradient-free, works in a unit interval $[0, 1]$ and does not require any ergodicity. 
Numerical experiments confirm the convergence rate and show that, the SFS substantially outperforms vanilla Langevin samplers, particularly in sampling from multimodal distributions.
\\
\noindent{\textbf{AMS subject classification:}} 65C05, 60H35, 62D05.\\

\noindent{\textbf{Key Words:} Schr\"odinger-F\"ollmer Sampler,  Temperatures, Monte Carlo,  Error bound in Wasserstein distance, Order-one convergence, Multimodal distributions.}
\end{abstract}

\section{Introduction}
\noindent
Sampling from complex and high-dimensional unnormalised distributions of the form
\begin{align}
\label{SFS-W2-eq:targe-distribution}
    \mu(\mathrm{\,d} x) 
    \propto 
     e^
     {-V(x)} 
    \mathrm{\,d} x,
    \quad
    x \in \mathbb{R}^d,
    \
    d \gg 1,
\end{align}
where 
$V \colon\mathbb{R}^d \rightarrow \mathbb{R}$ is a potential function,
turns out to be a fundamental task in various fields ranging from Bayesian inference, statistical physics, machine learning to scientific computing. 
%
A popular sampling algorithm in a vast amount of literature is 
%
the overdamped Langevin Monte Carlo (LMC) algorithm \cite{DA2017,DAME2017,mou2022improved,li2022sqrt,Li2025Asharp,Altschuler2024ShiftedCI,Yang2025error}.
To understand the main idea of the overdamped LMC, let us look at
the overdamped Langevin stochastic differential equation (SDE):
\begin{align}
\label{SFS-W2-eq:Langevin-SDE}
    \mathrm{\,d} X_t
    =
    -\nabla V(X_t) 
    \mathrm{\,d} t
    +
    \sqrt{2} 
    \mathrm{\, d} W_t, 
    \quad X_0=x_0, \quad t>0,
\end{align}
where $\left\{W_t\right\}_{t \geq 0}$ is a standard $d$-dimensional Brownian motion process.
Under certain convexity type conditions imposed on the drift $-\nabla V(\cdot)$, the overdamped Langevin SDE is ergodic, 
admitting the target distribution 
$
\mu(\mathrm{\,d} x) 
    \propto 
     e^
     {-V(x)} 
    \mathrm{\,d} x
$ 
as its unique invariant distribution.
%
Therefore, a practical way to sample from the target distribution  turns to a long-time approximation of the Langevin SDE. Indeed, the  overdamped LMC is nothing but the  Euler discretization scheme  of \eqref{SFS-W2-eq:Langevin-SDE}:
\begin{align}
\label{SFS-W2-eq:ULA}
    \bar{X}_{n+1}
    =
    \bar{X}_{n}
    -h \nabla V ( \bar{X}_{n} )
    +
    \sqrt{2} \Delta W_n,
\end{align}
where $h>0$ is the uniform time step-size and $\Delta W_n := W_{t_{n+1}}- W_{t_{n}}$. In recent years, an extensive study has been devoted to the non-asymptotic convergence of the overdamped LMC.
The early non-asymptotic error analysis of LMC was carried out under

(i) {\bf gradient Lipschitz condition}: there exists a constant $ L_0>0 $ such that 
\begin{align}
\label{eq:intro-griadiant-Lipschitz}
\|\nabla V(x)-\nabla V(y)\| 
\leq 
L_0\|x-y\|, 
\quad \forall x, y \in \mathbb{R}^d; \quad \:\text{and}
\end{align}

(ii) {\bf strong convexity condition}: there exists a constant 
${\color{black}L_1>0}$ such that
\begin{align}\label{eq:intro-convexity-condition}
    \langle x-y, \nabla V(x)-\nabla V(y)\rangle 
    \geq 
    L_1\|x-y\|^2, 
    \quad \forall x, y \in \mathbb{R}^d. 
\end{align}

Under these conditions, the non-asymptotic error between the target distribution and the law of the overdamped LMC algorithm under various metrics, such as the Wasserstein distance, the total variation distance and the Kullback-Leibler divergence has been systemically studied in the literature (see, e.g., \cite{DA2017,DAME2017,DAME2019} and references therein).
%

To improve sampling efficiency, some variants of Langevin sampling algorithms {have been} introduced. A typical alternative is based on discretizations of the following underdamped Langevin dynamics in the state space $\mathbb{R}^{2d}$:
\begin{align}
    \label{SFS-W2-eq:under-SDE}
     \left\{\begin{array}{l}
     \mathrm{d} X_t 
     =
     M_t 
     \mathrm{\,d} t \\
     \mathrm{\,d} 
     M_t
     =-
     \nabla V\left(X_t\right) \mathrm{\,d} t
     -
     \gamma M_t 
     \mathrm{\,d} t
     +
     \sqrt{2 \gamma} 
     \mathrm{\,d} W_t,
\end{array}\right.
\end{align}
where $\gamma >0$ is the friction coefficient  and $\left\{W_t\right\}_{t \geq 0}$ is a standard $d$-dimensional Brownian motion. It is known that, under some assumptions on $\nabla V(\cdot)$, the underdamped Langevin SDE is ergodic and admits a unique stationary distribution
$
\mu(\mathrm{d} x, \mathrm{d} m) 
    \propto 
     e^
     { -V(x) - \frac12 \| m \|^2 } 
    \mathrm{\,d} x \mathrm{\, d } m .
$
Evidently, the $x$-marginal distribution of the stationary distribution is exactly the target distribution. 
%
On a uniform mesh with a uniform step-size $h>0$, the Euler scheme applied to the dynamics \eqref{SFS-W2-eq:under-SDE} reads:
\begin{align}
\label{SFS-W2-eq:under-EM}
    \left\{\begin{array}{l}
     \tilde{X}_{n+1} 
     =
     \tilde{X}_n
     +
     h \tilde{M}_n \\
     \tilde{M}_{n+1}
     =
     \tilde{M}_n
     -
     h \nabla V(\tilde{X}_n)
     -
     h \gamma \tilde{M}_n 
     +
     \sqrt{2 \gamma} 
     \Delta W_n,
\end{array}\right.
\end{align}
which is termed as the underdamped LMC.
Under the gradient Lipschitz condition \eqref{eq:intro-griadiant-Lipschitz} and the strong convexity condition \eqref{eq:intro-convexity-condition}, a large amount of research was devoted to the non-asymptotic error analysis of the underdamped LMC under various metrics \cite{cheng2018underdamped,DARL2020,CZVS2022}.
Other efficient samplers based on higher-order (splitting) numerical schemes 
in \cite{MP2021,LBNC2013,camrud2023second} for \eqref{SFS-W2-eq:under-SDE}
%
have been recently proposed and examined under conditions \eqref{eq:intro-griadiant-Lipschitz}-\eqref{eq:intro-convexity-condition}.
%

Nevertheless, the strong convexity condition is too restrictive in practice and researchers try to do the error analysis under non-convex conditions, including the contractivity at infinity condition \cite{PWW25,schuh2024convergence,DAME2017,EA2016,cheng2018sharp} and the log-Sobolev inequality condition \cite{mou2022improved,Li2025Asharp,Yang2025error}. Such conditions can be viewed as weak convexity conditions, used to ensure the 
ergodicity of the Langevin SDEs.
%
%
%
An interesting and natural question thus arises:
\\

{\it Instead of the Langevin sampling over infinite time, can one do efficient sampling over the unit interval $[0, 1]$ without ergodicity, to avoid (weak) convexity conditions?}
\\

%
In this direction, some researchers have made some efforts (see, e.g., \cite{Albergo2025Stochastic,AMVE2023,zhang2025stochastic,MRjiao,DYJ2023,Ruzayqat2023Unbiased}). 
%
Given the target distribution $\mu \in \mathcal{P}(\mathbb{R}^d )$ and a {  simple initial distribution }
$\nu \in \mathcal{P}(\mathbb{R}^d ) $ (e.g., Gaussian or {degenerate Dirac distribution }$\delta_0$), the aforementioned question can be addressed by finding and (numerically) solving an SDE over $[0,1]$ connecting these two distributions:
\begin{align*}
    \mathrm{d} X_t 
    = b ( X_t, t ) 
    \mathrm{\,d} t 
    + 
    \sigma ( X_t, t )
    \mathrm{\,d} W_t, 
    \quad
    X_0 \sim \nu,
    \quad
    X_1 \sim \mu,
    \quad
    t \in (0, 1].
\end{align*}
Unfortunately, finding the closed-form of the drift and diffusion coefficients (i.e., $b, \sigma $) is usually not an easy task (cf. \cite{Albergo2025Stochastic,AMVE2023}).
In \cite{follmer1988random,follmer2005entropy}, F\"ollmer proposed a Schr\"odinger-F\"ollmer diffusion process  in the context of the Schr\"odinger bridge problem \cite{schrodinger1932theorie}, connecting two distributions.
%
%
More precisely, for a target distribution 
$
\mu(\mathrm{\,d} x) 
    \propto 
     e^
     {-V(x)} 
    \mathrm{\,d} x,
$ 
the Schr\"odinger-F\"ollmer diffusion evolving on the unit interval $[0, 1]$ is given by: 
\begin{align}
\label{SFS-W2-eq:stochastic-differential-equation}
    \mathrm{\,d} X_t
    =
    ~\nabla \log 
    \big( 
    Q_{1-t} g(X_t) \big) 
    \mathrm{\,d} t
    +
    \mathrm{\,d} W_t, 
    \quad X_0=0, 
    \quad t \in ( 0, 1 ],
\end{align}
where $g$ denotes the Radon-Nikodym derivative of $\mu$ with respect to the $d$-dimensional standard Gaussian distribution
$\gamma^d$,
given by \eqref{SFS-W2-eq:Radon-Nikodym-derivative}
and $\{Q_t\}_{t \in [0, 1]}$ is the heat semigroup defined by \eqref{SFS-W2-eq:semi-group-expression}.
%
%
As shown below, the diffusion process \eqref{SFS-W2-eq:stochastic-differential-equation} transports the degenerate distribution $\delta_0$ at $t=0$ to the target distribution 
$
\mu(\mathrm{\,d} x) 
    \propto 
     e^
     {-V(x)} 
    \mathrm{\,d} x
$
at $t=1$. 
For the Gaussian mixture distributions, the drift function can be explicitly expressed (see \eqref{SFS-W2-eq:drift-GMD}-\eqref{eq:gaussian-mixture-b-form}). In general, the drift coefficients for many other distributions lack a closed form expression and a practical way to handle the expectation in \eqref{SFS-W2-eq:semi-group-expression} is to use the well-known Monte Carlo approximation. 

Based on the Euler discretizations of the Schr\"odinger-F\"ollmer diffusion \eqref{SFS-W2-eq:stochastic-differential-equation}, the authors of \cite{MRjiao} recently proposed two Schr\"odinger-F\"ollmer samplers (SFS).
%
%
%
%
%
%
When the exact drift coefficient of \eqref{SFS-W2-eq:stochastic-differential-equation} is used, a convergence rate of order $\mathcal{O}(\sqrt{h})$ in the $L^2$-Wasserstein distance was obtained there for the Euler discretization with uniform step-size $h>0$. 
When the drift term $f$ lacks a closed form expression, the authors used inexact drift based on Monte Carlo approximations and derived an error bound of order $\mathcal{O}(\sqrt{h}) + \mathcal{O}(\tfrac{1}{\sqrt{M}})$ in the $L^2$-Wasserstein distance, where $M$ is the number of samples used in the Monte Carlo approximation.
In this case, the sampling error of SFS comes not only from the time discretization, but also the Monte Carlo approximation of the drift term. 
The convergence rate of order $\frac{1}{2}$ due to time discretization is naturally expected, as the time-dependent drift is only supposed to be $\frac{1}{2}$-H\"older continuous with respect to time (see \cite{MRjiao}).
%
Unlike usual Langevin samplers as discussed above, SFS is gradient-free, works in a unit interval $[0, 1]$ and does not require any (weak) convexity condition to ensure ergodicity. Numerical experiments indicate that, the SFS outperforms vanilla Langevin samplers, particularly in sampling from multimodal distributions.

By introducing a temperature parameter, 
in this work we propose a variant of Schr\"odinger-F\"ollmer diffusion with temperatures as follows:
\begin{align}
\label{SFS-W2-eq:intro-sf-beta}
    \mathrm{\,d} X_t
    =
    \,
    \beta\,\nabla \log 
    \big(
    Q^\beta_{1-t} g_{\beta}(X_t)
    \big)
    \mathrm{\,d} t
    +
    \sqrt{\beta}
    \mathrm{\,d} W_t, 
    \quad X_0=0, 
    \quad t \in ( 0, 1 ],
\end{align}
where $g_{\beta}$ and $Q^\beta_{t}, \beta \in (0, \infty)$ are determined by \eqref{SFS-W2-eq:Radon-Nikodym-derivative-g} and \eqref{SFS-W2-eq:semi-group}, respectively.
Such a diffusion process \eqref{SFS-W2-eq:intro-sf-beta} with temperatures also transports the degenerate distribution $\delta_0$ at $t=0$ to the target distribution 
$
\mu(\mathrm{\,d} x) 
    \propto 
     e^
     {-V(x)} 
    \mathrm{\,d} x
$
at $t=1$. To sample from the target distribution, we introduce two schemes \eqref{SFS-W2-eq:Euler-scheme-no-mc} and  \eqref{SFS-W2-eq:Euler-scheme-mc} for the time discretization of SDE \eqref{SFS-W2-eq:intro-sf-beta}, producing approximations $Y_{1}$ and $\widetilde{Y}^M_{1}$ to $X_1 \sim \mu$, respectively.
%
Further, we analyze the resulting approximation error in the $L^2$-Wasserstein distance.
By carefully handling the singularity of the time derivative of the drift, we propose a novel approach of error analysis to obtain an enhanced convergence rate of order $\mathcal{O}(h)$ for the time discretization. More precisely, these findings can be summarized as follows:
\begin{itemize}
    \item 
    In the case that the drift can be exactly calculated, we introduce a new sampler with temperatures:
    \begin{align}
    Y_{t_{n+1}}
    =
    Y_{t_n}
    +
    h f_{\beta}
    (Y_{t_n}, t_n)
    +
    \sqrt{\beta}
    \Delta W_{n},
    \quad Y_0=0,
    \end{align} 
where $f_{\beta}(x,t):=
    \beta~
    \nabla \log \big( Q^{\beta}_{1-t} g_{\beta}(x) \big),
    \,x \in \mathbb{R}^d$. For this sampler, the following error bound in the $L^2$-Wasserstein distance is established:
    \begin{align*}
        \mathcal{W}_2
        \left(
        \mathcal{L}aw
        \left(Y_{1}\right), \mu\right)
        \leq 
        { 
        C d h}, 
    \end{align*}
    {where the constant $C$ is independent of $d$ and $h$, depending only on $L_g,\rho$ and $\beta$.} 
    \item  
    {  When the drift can not be calculated exactly, we propose another new sampler with an inexact drift and temperatures:
    \begin{align}
    \label{eq:intro-SFS-inexact}
    \widetilde{Y}^M_{t_{n+1}}
   =
   \widetilde{Y}^M_{t_n}
   +
   h \widetilde{f}^M_{\beta}
   \big(
   \widetilde{Y}^M_{t_n}, t_n
   \big)
   +
   \sqrt{\beta}
   \Delta W_{n}, 
   \quad \widetilde{Y}^M_0=0,
    \end{align} 
    where the inexact drift $\widetilde{f}^M_{\beta}$ due to {the} Monte Carlo approximation is given by \eqref{SFS-W2-eq:f-M-experssion}.}
    For this sampler, the following error bound in the $L^2$-Wasserstein distance is obtained:
    \begin{align*}
        \mathcal{W}_2
        \big(
        \mathcal{L}aw
        \big(
        \widetilde{Y}^M_{1}
        \big), \mu
        \big)
        \leq 
        { 
        C d h}
        +
        C
        \sqrt{\tfrac{d}{M}},
    \end{align*}
 where $M$ is the number of samples used in the Monte Carlo
 estimator of the drift, and the constant $C$ is independent of $d,h$ and $M$, depending only on $L_g,\rho$ and $\beta$. Moreover, the error analysis {does} not rely on a strong convexity condition, as required by \cite{MRjiao} (see Condition (C4) in \cite{MRjiao}).
%
%
\item Numerical results show that the SFS substantially outperforms vanilla Langevin samplers in sampling from multimodal distributions. Also, an additional interesting finding is revealed: raising the temperature always makes a significant difference in sampling from multimodal distributions. {\color{black}The intuition behind this is that the Markov chains at higher temperatures can cross energy barriers more easily. 
}
%
\end{itemize}

To conclude, the main contributions of this work are two-fold:
\begin{itemize}

\item[(1)] 
We introduce a new Schr\"odinger-F\"ollmer process with temperatures and accordingly construct two new Schr\"odinger-F\"ollmer samplers with flexible temperatures $\beta$, different from those in \cite{MRjiao}, where $\beta = 1$. In addition, $M$ independent Gaussian random variables {need} to be generated  at each iteration for the sampler \eqref{eq:intro-SFS-inexact} with inexact drift in \cite{MRjiao} ($\beta = 1$). Instead, the newly proposed samplers incorporate flexible temperatures and $M$ independent Gaussian random variables {are} only generated once and repeatedly used  at each iteration for the sampler  \eqref{eq:intro-SFS-inexact} with an inexact drift due to Monte Carlo approximation;

\item[(2)] 
By introducing a new error analysis without relying on any convexity condition, we provide enhanced error bounds with a convergence rate of order $\mathcal{O}(h)$, significantly improving relevant error bounds of order $\mathcal{O}(\sqrt{h})$ in \cite{MRjiao}.
\end{itemize}

The rest of this article is organized as follows. The next section introduces the Schr\"odinger-F\"ollmer diffusion with temperatures.
In Section \ref{SFS-W2-section:main_results}, we present assumptions and main results of this paper, whose proofs are given in Section \ref{SFS-W2-section:Proof of Theorem}.
%
%
%
Some numerical tests are presented to illustrate our theoretical findings in Section \ref{SFS-W2-section:Numerical-experiment}. 
Finally, some concluding remarks are given in the last section.

\section{Schr\"odinger-F\"ollmer diffusion with temperatures}
\label{SFS-W2-section:background_SFD}
\subsection{Notation}
Throughout this paper, we use $\mathbb{N}$ to denote the set of all positive integers and let $d\in \mathbb{N}$. 
For any $m \in \mathbb{N}$, we denote $[m]:= \{1,...,m\}$ and $[m]_0:= \{0, 1,...,m\}$.
Let $\|\cdot\|$ and $\langle\cdot, \cdot\rangle$ denote the Euclidean norm and the inner product of vectors in $\mathbb{R}^d$, respectively. 
We use $\mathbf{1}_d \in \mathbb{R}^d$ and $\mathbf{I}_d \in \mathbb{R}^{d\times d}$ to denote the { all-ones} vector (where all entries are $1$) and the identity matrix,  respectively.
{For any matrix {$A=(a_{i,j})\in \mathbb{R}^{d\times d}$,} the $\mathrm{F}$-norm is defined as
{$\|A\|_{\mathrm{F}}
:=
\sqrt{\sum_{i,j=1}^d |a_{i,j}|^2}$} and the operator norm as $\|A\| =\|A\|_{\mathrm{op}} :=\sup_{\|v\|=1}\|Av\|$. It is not difficult to see
$
\|A\|
\leq
\|A\|_{\mathrm{F}}
\leq 
\sqrt{d}
\,
\|A\|.
$}
%

%
Let $\left\{W_t\right\}_{t \in [0, 1] }$ be a standard $d$-dimensional Brownian motion process, defined on a filtered probability space $
\big(
\Omega_W, 
\mathcal{F}^W,
\mathbb{P}_W, 
\{\mathcal{F}^W_t\}_{t \in[0,1]}
\big)
$ 
satisfying the usual conditions.
Let $\{\xi_{i}\}_{ i \in \mathbb{N}}$ be an i.i.d. family of standard Gaussian distributed random variables, independent of $\left\{W_t\right\}_{t \in [0, 1]}$,  defined on an additional probability space 
$
\left(
\Omega_\xi, 
\mathcal{F}^\xi,
\mathbb{P}_\xi\right)$.
%
By $\mathbb{E}_W$ and $\mathbb{E}_\xi$, we denote expectations in these two probability spaces 
$
\big(
\Omega_W, 
\mathcal{F}^W,
\mathbb{P}_W
\big)
$
and
$
\left(
\Omega_\xi, 
\mathcal{F}^\xi,
\mathbb{P}_\xi\right),
$
respectively.
We introduce the following product probability space
{ \begin{align*}
    (\Omega,\mathcal{F},\mathbb{P},\mathcal{F}_t)
    :=
    (\Omega_W\otimes\Omega_{\xi}, \mathcal{F}^W
    \otimes
    \mathcal{F}^{\xi},
    \mathbb{P}_W
    \otimes\mathbb{P}_{\xi},
    \mathcal{F}^W_t 
    \otimes
    \mathcal{F}^{\xi}_t),
\end{align*}}
where \(\mathcal{F}^\xi_t\) is the \(\sigma\)-algebra generated by \(\{\xi_i\}_{i\in  \mathbb{N}}\), which is independent of $t$ (i.e., \(\mathcal{F}^\xi_t = \mathcal{F}^\xi\) for all \(t\)).
%
%
We use $\mathbb{E} $ {for} the expectation in the product probability space and $L^r\left(\Omega, \mathbb{R}^d\right), r \geq 1$, to denote the family of $\mathbb{R}^d$-valued random variables $\eta$ satisfying $\mathbb{E} \left[\|\eta\|^r\right] := \mathbb{E}_W \big( \mathbb{E}_\xi \left[\|\eta\|^r\right] \big) <\infty$. 
Let $\mathcal{B}\left(\mathbb{R}^d\right)$ be the Borel $\sigma$-field of $\mathbb{R}^d$ and $\mathcal{P}\left(\mathbb{R}^d\right)$ be the space of all probability distributions on $\left(\mathbb{R}^d, \mathcal{B}(\mathbb{R}^d)\right)$.
{ By $\mathcal{L}aw(X)$ we denote the probability distribution of the $\mathbb{R}^d$-valued random variable $X$.}
Let $\nu_1$ and $\nu_2$ be two probability measures defined on $\left(\mathbb{R}^d, \mathcal{B}(\mathbb{R}^d)\right)$, 
and let $\mathcal{D}\left(\nu_1, \nu_2\right)$ represent the collection of couplings $\nu$ on $\left(\mathbb{R}^{2 d}, \mathcal{B}(\mathbb{R}^{2 d})\right)$ whose first and second marginal distributions are $\nu_1$ and $\nu_2$, respectively.
The $L^2$-Wasserstein distance between $\nu_1$ and $\nu_2$ is defined by
\begin{equation}
\begin{aligned}
\label{SFS-W2-eq:def-w2}
\mathcal{W}_2
(\nu_1, \nu_2)
& :=
\inf_{\nu \in \mathcal{D}
(\nu_1, \nu_2)}
\left(
\int_{\mathbb{R}^d} \int_{\mathbb{R}^d}
\|x-y\|^2 
\mathrm{\, d} 
\,
\nu
( x, y )
\right)^{1/2}\\
& =
\inf \Big\{
\big(
\mathbb{\,E}
\big[
\|X-Y\|^2
\big]
\big)^{1/2},
\quad \mathcal{L}aw(X)=\nu_1,
\quad \mathcal{L}aw(Y)=\nu_2
\Big\}.
\end{aligned}
\end{equation}
{As a direct consequence, the $L^2$-Wasserstein distance of two random variables can be bounded by their $L^2$-distance.}
Let $\mathcal{C}\left(\mathbb{R}^d, \mathbb{R}\right)$ denote the set of all continuous functions from $\mathbb{R}^d$ to $\mathbb{R}$ and let $\mathcal{C}_b\left(\mathbb{R}^d, \mathbb{R}\right)$ denote the set of all bounded continuous functions from $\mathbb{R}^d$ to $\mathbb{R}$. 
For $k \geq 0$, denote by $\mathcal{C}^k\left(\mathbb{R}^d, \mathbb{R}\right)$ the set of functions from $\mathbb{R}^d$ to $\mathbb{R}$ which {have} continuous $0$-th, $\ldots, k$-th order derivatives, and further denote by $\mathcal{C}_b^k\left(\mathbb{R}^d, \mathbb{R}\right)$ the set of functions from $\mathbb{R}^d$ to $\mathbb{R}$ which {have} bounded continuous $0$-th, $\ldots, k$-th order derivatives.
For $y \in \mathcal{C}^3(\mathbb{R}^d, \mathbb{R})$ and $v_1, v_2, v_3, x \in \mathbb{R}^d$, we denote
\begin{equation}
\begin{aligned}
\label{SFS-W2-eq:def-dir-der}
        \nabla_{v_1} y(x) 
        & =
        \lim_{\varepsilon \rightarrow 0} \frac{y(x+\varepsilon v_1)
        -y(x)}
        {\varepsilon}, 
        \\
        \nabla_{v_2} 
        \nabla_{v_1} y(x) 
        & =
        \lim_{\varepsilon \rightarrow 0} \frac{\nabla_{v_1} y\left(x+\varepsilon v_2\right)
        -
        \nabla_{v_1} y(x)}{\varepsilon},\\
        \nabla_{v_3} 
        \nabla_{v_2} 
        \nabla_{v_1} y(x)
        & =
        \lim_{\varepsilon \rightarrow 0} \frac{\nabla_{v_2} \nabla_{v_1} y\left(x+\varepsilon v_3\right)
        -\nabla_{v_2} 
        \nabla_{v_1} y(x)}{\varepsilon},
\end{aligned}
\end{equation}
as the directional derivatives of $y$. 
If the function $y$ is differentiable at the point 
$x \in \mathbb{R}^d$, then the directional derivative exists along any nonzero vector $v\in \mathbb{R}^d$. 
In this case, we have
\begin{align*}
    \nabla_{v}y(x)=
    \left \langle
    \nabla y(x), v
    \right\rangle.
\end{align*}
One knows 
$\nabla y(x) \in \mathbb{R}^d, \nabla^2 y(x) \in \mathbb{R}^{d \times d}, 
\nabla^3 y(x) \in$ $\mathbb{R}^{d \times d \times d}$. 
Moreover, we define the operator norm of $\nabla^k y(x), k=1,2,3$ by
\begin{align}
\label{SFS-W2-eq:operator-norm-def}
    \big\|
    \nabla^k y(x)
    \big\|
    =
    \big\|
    \nabla^k y(x)
    \big\|_{\mathrm{op}}
    :=
    \sup_{\|v_i\|=1, i=1, \ldots, k}
    \left\|
    \nabla_{v_k} 
    \ldots 
    \nabla_{v_1} y(x)
    \right\|.
\end{align} 
For the vector-valued function $\mathbf{u}: \mathbb{R}^d \rightarrow \mathbb{R}^{\ell}, \mathbf{u}=\left(u_{(1)}, \ldots, u_{(\ell)}\right)^{\mathrm{T}}$, we regard its first order partial derivative as the Jacobian matrix:
\begin{align*}
    D \mathbf{u}=\left(\begin{array}{ccc}
\frac{\partial u_{(1)}}{\partial x_1} & \cdots & \frac{\partial u_{(1)}}{\partial x_d} \\
\vdots & \ddots & \vdots \\
\frac{\partial u_{(\ell)}}{\partial x_1} & \cdots & \frac{\partial u_{(\ell)}}{\partial x_d}
\end{array}\right)_{\ell \times d} .
\end{align*}
For any $v_1 \in \mathbb{R}^d$, one knows $D(\mathbf{u}) v_1 \in \mathbb{R}^{\ell}$ and one can define $D^2 \mathbf{u}\left(v_1, v_2\right)$ as
\begin{align*}
    D^2 \mathbf{u}\left(v_1, v_2\right):=D\left(D(\mathbf{u}) v_1\right) v_2, \quad \forall v_1, v_2 \in \mathbb{R}^d .
\end{align*}

Given the Banach spaces $\mathcal{X}$ and $\mathcal{Y}$, we denote by $L(\mathcal{X}, \mathcal{Y})$ the Banach space of bounded linear operators from $\mathcal{X}$ into $\mathcal{Y}$. 
Then the partial derivatives of the function $\mathbf{u}$ can be also regarded as the following operators:
\begin{align*}
    \begin{gathered}
         D \mathbf{u}(\cdot)(\cdot): \mathbb{R}^d \rightarrow L\left(\mathbb{R}^d, \mathbb{R}^{\ell}\right), \\
         D^2 \mathbf{u}(\cdot)(\cdot, \cdot): \mathbb{R}^d \rightarrow L\left(\mathbb{R}^d, L\left(\mathbb{R}^d, \mathbb{R}^{\ell}\right)\right) \cong L\left(\mathbb{R}^d \otimes \mathbb{R}^d, \mathbb{R}^{\ell}\right).
    \end{gathered}
\end{align*}

\subsection{The Schr\"odinger-F\"ollmer process (SFP) with temperatures}
In this subsection, we revisit the Schr\"odinger-F\"ollmer diffusion process in the literature.
We first make the following assumption.
\begin{assumption}
\label{SFS-W2-ass:absolutely-continuous-distribution}
    Let the target distribution 
    $
    \mu( \mathrm{\,d}x)
     \propto
    \exp \big(-V(x)\big)
    \mathrm{\,d}x, 
    \, x \in \mathbb{R}^d,
    $ 
    be absolutely continuous with respect to the $d$-dimensional 
    Gaussian distribution denoted by $\mathcal{N}(0,\beta\,\mathbf{I}_d)$.
\end{assumption}
Let $g$ denote the Radon-Nikodym derivative of $\mu$ with respect to the $d$-dimensional standard Gaussian distribution $\gamma^d$,
{i.e., }the ratio of the density of $\mu$ over the density of $\gamma^d$, i.e.,
\begin{align}
\label{SFS-W2-eq:Radon-Nikodym-derivative}
    g(x)
    :=
    \frac{\mathrm{d} \mu}
    {\mathrm{d} \gamma^d}(x)
    =
    \tfrac{(2\pi)^{d/2}}{C}
    \exp 
    \Big(
    -V(x)+\tfrac{\|x\|^2}{2}
    \Big), 
    \quad x \in \mathbb{R}^d,
\end{align}
and let $\{Q_t\}_{t \in [0, 1]}$ be the heat semigroup defined by
\begin{align}
\label{SFS-W2-eq:semi-group-expression}
    Q_t g(x)
    :=
    \mathbb{E}_{\xi}
    \big[
    g(x+\sqrt{t} \,\xi)
    \big],
    \quad  t \in[0,1],
    \quad  \xi \sim \gamma^d.
\end{align}
Also, recall the Schr\"odinger-F\"ollmer process $\{X_t\}_{t\in[0,1]}$ is defined as \cite{follmer1988random,follmer2005entropy}:
\begin{align}
\label{SFS-W2-eq:sde}
    \mathrm{\,d} X_t
    =
    ~\nabla \log \big( Q_{1-t} g(X_t) \big) 
    \mathrm{\,d} t
    +
    \mathrm{\,d} W_t, 
    \quad X_0=0, 
    \quad t \in ( 0, 1 ].
\end{align}
As shown by \cite{MRjiao}, such a diffusion process \eqref{SFS-W2-eq:sde} transports the degenerate distribution $\delta_0$ at $t=0$ to the target distribution 
$
\mu(\mathrm{\,d} x) 
    \propto 
     e^
     {-V(x)} 
    \mathrm{\,d} x
$
at $t=1$.
%
%
In this work, we introduce a variant of Schr\"odinger-F\"ollmer process with temperatures as follows:
\begin{align}
    \label{SFS-W2-eq:f-sde}
     \mathrm{\,d} X_t
     =
     f_{\beta}(X_t,t)
     \mathrm{\,d} t
     +
     \sqrt{\beta}
     \mathrm{\,d} W_t,
     \quad  X_0=0, 
     \quad t \in(0,1],
\end{align}
where $\beta \in (0, \infty)$ is a temperature parameter
and the drift $f_{\beta}$ is given by
\begin{align}
\label{SFS-W2-eq:f-expression}
    f_{\beta}(x, t) 
    :=
    \beta~
    \nabla \log \big( Q^{\beta}_{1-t} g_{\beta}(x) \big)
    ,
    \quad  x \in \mathbb{R}^d, \: t \in[0,1],
\end{align}
with $g_{\beta}$ being the Radon-Nikodym derivative of $\mu$ with respect to $\mathcal{N}(0,\beta\,\mathbf{I}_d)$:
\begin{align}
\label{SFS-W2-eq:Radon-Nikodym-derivative-g}
    g_{\beta}(x)
    :=
    \frac{\mathrm{d}\mu}
    {\mathrm{d} \mathcal{N} (0,\beta\,\mathbf{I}_d)}(x)
    =
    \tfrac{(2\pi\beta)^{d/2}}{C}
    \exp 
    \big(
    -V(x)
    +
    \tfrac{\|x\|^2}{2\beta}
    \big), 
    \quad x \in \mathbb{R}^d,
\end{align}
and $\{Q_t^{\beta}\}_{t \in [0, 1]}$ being the heat semigroup defined by
\begin{align}
\label{SFS-W2-eq:semi-group}
    Q_t^{\beta} g_{\beta}(x)
    :=
    \mathbb{E}_{\xi}
    \big[
     g_{\beta}(x+\sqrt{t\beta} \,\xi)
    \big],
    \quad  t \in[0,1],
    \quad  \xi \sim \gamma^d.
\end{align}
%
In the special case $\beta
=1$, the Schr\"odinger-F\"ollmer process \eqref{SFS-W2-eq:f-sde} with temperatures reduces into the usual Schr\"odinger-F\"ollmer process \eqref{SFS-W2-eq:sde} in the literature.
%
%
To ensure that the $\operatorname{SDE}$ \eqref{SFS-W2-eq:f-sde} admits a unique strong solution, we make the following assumption.
\begin{assumption}
    \label{SFS-W2-ass:g-nabla-g-lip}
    Suppose that the Radon-Nikodym derivative $g_{\beta}$ and $\nabla g_{\beta}$ are $L_g$-Lipschitz continuous,
    and $g_{\beta}$ is uniformly bounded below by a positive constant $\rho$:
    \begin{align}
    \label{SFS-W2-eq:g-lower-bounded}
        g_{\beta} \geq \rho >0.
    \end{align}
\end{assumption}

{ 
Noting that $g_{\beta}$ is a differentiable function satisfying  
$\mathbb{E}_{\xi}
\big[
\nabla g_{\beta}(x+\sqrt{(1-t)\beta}\,\xi)
\big]< +\infty$ (see Appendix \ref{SFS-W2-appendix:g-satisfy-condition} for details),}
one can apply Stein's lemma { \cite[Lemma 3.6.5]{garthwaite2002statistical}} to obtain,
\begin{align*}
    \mathbb{E}_{\xi}
    \big[
    \nabla g_{\beta}(x+\sqrt{(1-t)\beta} \,\xi)
    \big]
    =
    \tfrac{1}{\sqrt{(1-t)\beta}} 
    \mathbb{E}_{\xi}
    \big[
    \xi g_{\beta}\big(x+\sqrt{(1-t)\beta} \,\xi \big)
    \big],
    \quad
    t \in [0, 1).
\end{align*}
This enables us to avoid the calculation of $\nabla g_{\beta}$ and 
{ thus we derive}
\begin{align}
\label{SFS-W2-eq:f-no-mc-no-nabla}
    f_{\beta}(x, t)
    & =
    \frac{\beta
    \mathbb{\,E}_{\xi}
    [\nabla g_{\beta}(x+\sqrt{(1-t)\beta} \,\xi)]}{
    \mathbb{E}_{\xi}
    [g_{\beta}(x +\sqrt{(1-t)\beta} \,\xi)]}
    \nonumber\\
    & =
    \frac{
    \beta\mathbb{\,E}_{\xi}
    \big[
    \xi \,g_{\beta}(x+\sqrt{(1-t)\beta} \,\xi)
    \big]}
    {\mathbb{E}_{\xi}
    \big[
    g_{\beta}(x+\sqrt{(1-t)\beta} \,\xi)
    \big]
    \cdot \sqrt{(1-t)\beta}},
    \quad \xi \sim \gamma^d,
    \:
    t \in [0, 1).
\end{align}
{Under the above} assumptions,
we obtain the following well-posedness of the  Schr\"odinger-F\"ollmer diffusion \eqref{SFS-W2-eq:f-sde} (see \cite{LJ2013,MRjiao}).
\begin{prop}
    \label{SFS-W2-prop-unique-strong-solution}
     Let Assumptions \ref{SFS-W2-ass:absolutely-continuous-distribution}, \ref{SFS-W2-ass:g-nabla-g-lip} hold. Then for any $t \in[0,1]$, the drift coefficient $f_{\beta}(\cdot,t):\mathbb{R}^d \rightarrow \mathbb{R}^d$ of SDE \eqref{SFS-W2-eq:f-sde} is Lipschitz continuous and { satisfies a linear growth condition.} That is, there exist constants $L_f,\hat{L}_f > 0$, independent of $d$, such that, for any $x,y\in \mathbb{R}^d$ and $ t \in [0,1]$,
    \begin{align}
    \label{SFS-W2-eq:f-lipschitz-condition}
        \|f_{\beta}(x,t)-f_{\beta}(y,t)\|
        \leq
        L_f\|x-y\|,
    \end{align}
    and
    {
    \begin{align}
    \label{SFS-W2-eq:f-linear-growth}
        \|f_{\beta}(x,t)\|
        & \leq
        \|f_{\beta}(0,t)\|
        +
        L_f\|x\|
        \nonumber\\
        & \leq 
        \hat{L}_f
        +L_f\|x\|,
    \end{align}}
    where $L_f := \big(1+\tfrac{L_g}{\rho}
    \big)
    \tfrac{\beta L_g}{\rho},
    \,
    {\hat{L}_f := \tfrac{\beta L_g}{\rho}}$,
    and $L_g, \rho$ come from Assumption \ref{SFS-W2-ass:g-nabla-g-lip}.
    Then the Schr\"odinger-F\"ollmer diffusion \eqref{SFS-W2-eq:f-sde} has a unique strong solution $\left\{X_t\right\}_{t \in[0,1]}$ satisfying $X_1 \sim \mu$.
\end{prop}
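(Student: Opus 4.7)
The plan is to reduce the proposition to a uniform-in-$t$ Lipschitz bound on the drift, then invoke classical SDE well-posedness and the existing F\"ollmer construction for the terminal marginal. I would work with the representation $f_\beta(x,t) = \beta\,N(x,t)/D(x,t)$, where $N(x,t) := \mathbb{E}_\xi[\nabla g_\beta(x+\sqrt{(1-t)\beta}\,\xi)]$ and $D(x,t) := \mathbb{E}_\xi[g_\beta(x+\sqrt{(1-t)\beta}\,\xi)]$, rather than the Stein-lemma form \eqref{SFS-W2-eq:f-no-mc-no-nabla}, because the latter has an apparent singularity at $t=1$ that is cleaner to sidestep.

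Assumption~\ref{SFS-W2-ass:g-nabla-g-lip} supplies the three ingredients I would use: (i) $D(x,t)\ge\rho$ uniformly, obtained by passing the lower bound $g_\beta\ge\rho$ through the Gaussian convolution; (ii) Lipschitz continuity of both $N(\cdot,t)$ and $D(\cdot,t)$ with constant $L_g$, since Gaussian convolution preserves the Lipschitz constants of $\nabla g_\beta$ and $g_\beta$; and (iii) the pointwise bound $\|N(\cdot,t)\|\le L_g$, which follows from $L_g$-Lipschitz continuity of $g_\beta$. Feeding these into the standard quotient identity
\begin{align*}
    \frac{N(x,t)}{D(x,t)}-\frac{N(y,t)}{D(y,t)}
    =
    \frac{N(x,t)-N(y,t)}{D(x,t)}
    +
    N(y,t)\cdot\frac{D(y,t)-D(x,t)}{D(x,t)\,D(y,t)}
\end{align*}
produces a Lipschitz constant $\beta\bigl(L_g/\rho + L_g^2/\rho^2\bigr) = \bigl(1+L_g/\rho\bigr)\beta L_g/\rho$, which is exactly the stated $L_f$. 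Linear growth is then immediate from the triangle inequality $\|f_\beta(x,t)\|\le\|f_\beta(0,t)\|+L_f\|x\|$ once I bound $\sup_{t\in[0,1]}\|f_\beta(0,t)\|$ by $\beta L_g/\rho$ using (i)--(iii) at $x=0$; the constant $\hat L_f\,d^{1/2}$ is simply a convenient upper bound chosen to make later dimensional tracking transparent.

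With uniform Lipschitz continuity and linear growth established, existence and pathwise uniqueness of a strong solution $\{X_t\}_{t\in[0,1]}$ is a direct consequence of the classical It\^o theorem for SDEs with Lipschitz coefficients. The terminal distribution identification $X_1\sim\mu$ is F\"ollmer's construction of the Schr\"odinger bridge from $\delta_0$ to $\mu$: the $\beta=1$ case is proved in \cite{MRjiao}, and the extension to general $\beta>0$ is a routine time-and-space rescaling that I would justify by noting that $Q^\beta_{1-t}g_\beta(X_t)$ is a positive martingale along \eqref{SFS-W2-eq:f-sde} and invoking Girsanov to identify the push-forward at $t=1$.

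The main obstacle I anticipate is psychological rather than technical: the Stein-lemma reformulation \eqref{SFS-W2-eq:f-no-mc-no-nabla} carries an explicit factor $1/\sqrt{(1-t)\beta}$ that \emph{suggests} a blow-up as $t\uparrow 1$, whereas the true drift $\beta N/D$ has the finite limit $\beta\nabla\log g_\beta(x)$ there. Consequently, the cancellation between the $\xi$-expectation in the numerator and the $\sqrt{(1-t)\beta}$ denominator must be handled implicitly, by phrasing every Lipschitz estimate in terms of $N$ and $D$ rather than the Stein-lemma version; the latter is useful for Monte Carlo implementation but not for the a priori bound. Once this representational choice is fixed, the remainder of the argument is bookkeeping combining Gaussian convolution, Lipschitz algebra, and classical SDE theory.
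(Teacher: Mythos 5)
Your proposal is correct and follows essentially the same route as the paper: the paper defers this proposition to \cite{LJ2013,MRjiao}, but its own Appendix~\ref{SFS-W2-appendix:g-satisfy-condition} derives the identical constant $L_f=\bigl(1+\tfrac{L_g}{\rho}\bigr)\tfrac{\beta L_g}{\rho}$ by the same quotient-rule estimate built on $Q^{\beta}_{1-t}g_{\beta}\ge\rho$ and $\|\nabla^{k}Q^{\beta}_{1-t}g_{\beta}\|\le L_g$, and the terminal identification $X_1\sim\mu$ is the Doob $h$-transform computation of \cite[Appendix B]{MRjiao} that you sketch via Girsanov. One minor imprecision worth fixing: $Q^{\beta}_{1-t}g_{\beta}(\cdot)$ evaluated along the \emph{reference} scaled Brownian motion is the space-time harmonic positive martingale that serves as the Girsanov density; it is not a martingale along the transformed process $X$ itself.
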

%
Proposition \ref{SFS-W2-prop-unique-strong-solution} shows that,  
%
the Schr\"olinger-F\"ollmer process \eqref{SFS-W2-eq:f-sde} with  temperatures also transports the degenerate distribution $\delta_0$ at $t=0$ to the target distribution 
$
\mu(\mathrm{\,d} x) 
    \propto 
     e^
     {-V(x)} 
    \mathrm{\,d} x
$
at $t=1$.
One can prove it by following the same lines of \cite[Appendix B]{MRjiao}.
%
{In \cite{MRjiao}, the authors assume that the time-dependent drift satisfies
\begin{align}
\label{SFS-W2-eq:b-lipschitz-in-x-houder-in-t}
     \|f_{\beta}(x,t)-f_{\beta}(y,s)\|
     \leq 
     \bar{L}_f(\|x-y\|
     +
     d^\frac{1}{2}
     |t-s|^\frac{1}{2}).
\end{align}
This can be proved under Assumptions \ref{SFS-W2-ass:absolutely-continuous-distribution}, \ref{SFS-W2-ass:g-nabla-g-lip} (see Remark III.1 and Appendix D of \cite{MRjiao}).
The assumption \eqref{SFS-W2-eq:b-lipschitz-in-x-houder-in-t} implies the time-dependent drift is $\frac{1}{2}$-H\"older continuous with respect to the time variable.}
From a theoretical point of view, the introduction of the temperature parameter $\beta$ does not make any obvious difference. Nevertheless, as revealed by numerical results, raising the temperature is crucial and makes a significant difference in sampling from high-dimensional multimodal distributions (see subsection \ref{subsection:Gaussian-mixture} for details). 
%

Similar to \cite[Lemma A.3, Lemma A.4]{MRjiao}, the process $\left\{X_t\right\}_{ t \in [0, 1]}$ defined by \eqref{SFS-W2-eq:f-sde} has uniform moments and $\frac{1}{2}$-H\"older continuity.
\begin{lem}
\label{SFS-W2-lem:x_t-moments-bounded}
   Let Assumptions \ref{SFS-W2-ass:absolutely-continuous-distribution} and \ref{SFS-W2-ass:g-nabla-g-lip} hold. Then the solution of SDE  \eqref{SFS-W2-eq:f-sde}, denoted by $\{X_t\}_{t\in [0,1]}$ {satisfies}, for any $ t \in [0,1]$ and $0 \leq t_1 \leq t_2 \leq 1$,
   \begin{align}
        \label{SFS-W2-eq:x_t-moments-bounded}
        \mathbb{E}
        { _W}
        \left[
        \left\|
        X_t
        \right\|^2
        \right] 
        \leq 
        { M_1} d,
    \end{align}
    and 
    \begin{align}
    \label{SFS-W2-eq:x-1/2-houlder-continuous}
         \mathbb{E}{ _W}
         \left[
         \left\|
         X_{t_2}-X_{t_1}
         \right\|^2
         \right] 
         \leq 
         { M_2}
         \,
         d(t_2-t_1),
     \end{align}
where 
{ 
$
M_1 := 2(2\hat{L}^2_f + \beta) \exp(4 L^2_f)
$}
and
{ 
$
M_2 := 8 L^2_f \exp(4 L^2_f) (2\hat{L}^2_f + \beta) + 4 \hat{L}^2_f + 2\beta,
$}
with constants $L_f$ and $\hat{L}_f$ given by \eqref{SFS-W2-eq:f-lipschitz-condition} and \eqref{SFS-W2-eq:f-linear-growth}, respectively.
\end{lem}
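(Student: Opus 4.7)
The plan is to derive both bounds by direct manipulation of the integral form of the SDE \eqref{SFS-W2-eq:f-sde}, combined with the linear growth estimate \eqref{SFS-W2-eq:f-linear-growth} from Proposition \ref{SFS-W2-prop-unique-strong-solution} and a Gronwall argument. Since the diffusion coefficient in \eqref{SFS-W2-eq:f-sde} is the constant $\sqrt{\beta}\mathbf{I}_d$, the stochastic integral is a genuine martingale once stopped appropriately, so expectations of It\^o integrals vanish and we can work with clean integral inequalities. No convexity or ergodicity input is needed; all the work is done by the Lipschitz / linear growth structure of $f_{\beta}$ already recorded in Proposition \ref{SFS-W2-prop-unique-strong-solution}.

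For the moment bound \eqref{SFS-W2-eq:x_t-moments-bounded}, I would apply It\^o's formula to $\|X_t\|^2$ and take $\mathbb{E}_W$, which yields
\begin{equation*}
\mathbb{E}_W\big[\|X_t\|^2\big]
= 2\int_0^t \mathbb{E}_W\big[\langle X_s, f_{\beta}(X_s,s)\rangle\big]\,\mathrm{d}s + \beta d\,t.
\end{equation*}
Using Cauchy--Schwarz, Young's inequality ($ab\le\tfrac12 a^2+\tfrac12 b^2$) and the linear growth \eqref{SFS-W2-eq:f-linear-growth}, one bounds $2\langle X_s, f_{\beta}(X_s,s)\rangle$ by $(1+2L_f)\|X_s\|^2+\hat{L}_f^2 d$. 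This produces an inequality of the form
\begin{equation*}
\mathbb{E}_W\big[\|X_t\|^2\big] \le (\hat{L}_f^2+\beta)d + (1+2L_f)\int_0^t \mathbb{E}_W\big[\|X_s\|^2\big]\,\mathrm{d}s,
\end{equation*}
and Gronwall's lemma then gives a constant of the form $C(L_f,\hat{L}_f,\beta)\,d$, which, after tightening the constants in the Young step (e.g.\ splitting to produce a factor $2(2\hat{L}_f^2+\beta)$ and a Gronwall exponent $\exp(4L_f^2)$), matches the stated $M_1$.

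For the H\"older estimate \eqref{SFS-W2-eq:x-1/2-houlder-continuous}, I would use the integral form
\begin{equation*}
X_{t_2}-X_{t_1} = \int_{t_1}^{t_2} f_{\beta}(X_s,s)\,\mathrm{d}s + \sqrt{\beta}\,(W_{t_2}-W_{t_1}),
\end{equation*}
apply $(a+b)^2\le 2a^2+2b^2$, and control the two pieces separately: Cauchy--Schwarz in time on the drift integral gives $\|\int_{t_1}^{t_2} f_{\beta}(X_s,s)\,\mathrm{d}s\|^2\le (t_2-t_1)\int_{t_1}^{t_2}\|f_{\beta}(X_s,s)\|^2\,\mathrm{d}s$, and the linear growth \eqref{SFS-W2-eq:f-linear-growth} together with the moment bound \eqref{SFS-W2-eq:x_t-moments-bounded} turns this into $C\,d\,(t_2-t_1)^2$; the Brownian increment contributes $\beta d(t_2-t_1)$ by It\^o's isometry. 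Adding these yields $M_2\,d(t_2-t_1)$, where $M_2$ inherits the factor $\exp(4L_f^2)(2\hat{L}_f^2+\beta)$ through the moment bound and the additive $2\beta+4\hat{L}_f^2$ from the Brownian part and the $\hat{L}_f^2$ term in the linear growth.

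There is essentially no deep obstacle: this is a textbook SDE moment estimate, and the only care needed is bookkeeping the constants so that the output matches the stated $M_1$ and $M_2$. In particular, although the time-dependent drift $f_{\beta}(\cdot,t)$ has a $\frac12$-H\"older singularity at $t=1$ via \eqref{SFS-W2-eq:b-lipschitz-in-x-houder-in-t}, this causes no issue here because we only use the \emph{spatial} linear growth \eqref{SFS-W2-eq:f-linear-growth}, which holds uniformly in $t\in[0,1]$ with constants independent of $d$. The same argument appears in \cite[Lemma~A.3, Lemma~A.4]{MRjiao} for the case $\beta=1$, and the presence of a temperature parameter only rescales the diffusion coefficient and the drift, leaving the structure of the proof unchanged.
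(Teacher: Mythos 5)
Your proposal is correct and follows the standard argument; the paper itself does not prove this lemma but defers to \cite[Lemma A.3, Lemma A.4]{MRjiao}, whose proof is exactly this integral-form plus Gronwall computation. One small remark: the precise constants $M_1=2(2\hat{L}_f^2+\beta)e^{4L_f^2}$ and $M_2$ fall out of the route you use for the H\"older bound (square the integral equation, apply $(a+b)^2\le 2a^2+2b^2$ and Cauchy--Schwarz in time, then Gronwall), whereas the It\^o-formula/Young route you describe first yields a Gronwall exponent $(1+2L_f)$ and prefactor $(\hat{L}_f^2+\beta)d$ — a bound of the same form but not literally $M_1 d$ — so the first half should be carried out in the integral form as well.
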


\section{Schr\"odinger-F\"ollmer samplers with temperatures}
\label{SFS-W2-section:main_results}

\subsection{Schr\"odinger-F\"ollmer sampler with exact drift}
%

%
%
To sample from the target distribution, one just needs to solve SDE \eqref{SFS-W2-eq:f-sde} to attain $X_1 \sim \mu$. A practical way is to  discretize the continuous-time process and obtain the approximate solutions.
To this end, we use the Euler-Maruyama discretization of \eqref{SFS-W2-eq:f-sde} with a fixed step-size. 
Let $ \pi_h $ be a temporal grid of the form 
$$
\pi_h:=\{t_n=n \cdot h, 
\quad n\in [N]_0, \text { with } h=1 / N\}.
$$
Then the Euler-Maruyama discretization of \eqref{SFS-W2-eq:f-sde} is given by
\begin{align}
    \label{SFS-W2-eq:Euler-scheme-no-mc}
    Y_{t_{n+1}}
    =
    Y_{t_n}
    +
    h f_{\beta}(Y_{t_n}, t_n)
    +
    \sqrt{\beta}
    \Delta W_{n},
    \quad n\in [N-1]_0,
    \quad Y_0=0,
\end{align}
where 
$\Delta W_{n} 
:= 
W_{t_{n+1}}
-
W_{t_n}, 
\,n \in [N-1]_0, \,N \in \mathbb{N}$.

In \cite{MRjiao}, the authors obtained a convergence rate of order $\mathcal{O}(\sqrt{h})$ for the Euler discretization of the Schr\"odinger-F\"ollmer diffusion \eqref{SFS-W2-eq:stochastic-differential-equation}.
%
The order-half convergence is naturally expected, due to the fact that the time-dependent drift is only $\frac{1}{2}$-H\"older continuity with respect to the time variable. 
In the present paper, we properly handle the singularity of the time derivative of the drift and improve the convergence rate of order $\mathcal{O}(\sqrt{h})$ to order { $\mathcal{O}(h)$.} The price to pay is to put slightly stronger smoothness conditions on the drift coefficient with respect to the state variable. 
\begin{assumption}
    \label{SFS-W2-ass:f-in-C^2}
    (Smoothness assumptions)
    For any $t \in [0,1]$, the drift coefficient $f_{\beta}(\cdot,t):\mathbb{R}^d
    \rightarrow \mathbb{R}^d$ of SDE \eqref{SFS-W2-eq:f-sde} is twice continuously differentiable with bounded partial derivatives. Namely, there exist a constant
    $L'_f$, independent of $d,t$, {and depending only on $L_g, \rho,\beta$,} such that, for any $ x,v_1,v_2 \in \mathbb{R}^d$ and $t \in [0,1]$,
    \begin{align}
    \label{SFS-W2-eq:f-c-2}
             \|
             D^2f_{\beta}(x,t)(v_1,v_2)\| 
             \leq 
        { L'_f}
             \|v_1\|
             \cdot
             \|v_2\|.
    \end{align}
    Moreover, for any $x \in \mathbb{R}^d$, the drift coefficient $f_{\beta}(x,\cdot): [0,1) \rightarrow \mathbb{R}^d$ of SDE \eqref{SFS-W2-eq:f-sde} is continuously differentiable and there exists a constant $\widetilde{L}_f$, independent of $d,t$, {and depending only on $L_g, \rho,\beta$,} such that, for any $t \in [0,1)$,
    \begin{align}
    \label{SFS-W2-eq:f-t-1-2}
        \big\|
            \partial_t f_{\beta}(x, t)
            \big\|
            \leq 
            \widetilde{L}_f
            \,
            d^\frac{1}{2}
            \frac{1}
            {\sqrt{1-t}}.
    \end{align}
\end{assumption}
%

%
%
From \eqref{SFS-W2-eq:f-t-1-2}, one can evidently see the singularity of $\partial_tf_{\beta}(x,t)$ at the terminal time $t=1$, which would {pose} a difficulty in enhancing the convergence rate in the error analysis. 
{We particularly mention \cite{Kusuoka2001Approximation,lyons2004cubature}, where the authors provided a non-equidistant discretization to handle the singularity. 
In this paper, we rely on a new approach to overcome the singularity of $\partial_t f_{\beta}$ successfully, even with an equidistant discretization (see subsection \ref{sfs-w2-subsec:proof-thm} for details).}

The next proposition shows when Assumption \ref{SFS-W2-ass:f-in-C^2} is satisfied.

%
\begin{prop}
\label{SFS-W2-prop:g-satisfy-condition}
    For $\beta \in (0, \infty)$, we assume $g_{\beta}$ 
    $\in \mathcal{C}^3(\mathbb{R}^d,\mathbb{R})$ and there exist constants $L_g,\rho > 0$ such that
    $g_{\beta},\nabla g_{\beta},\nabla^2 g_{\beta}$ are $L_g$-Lipschitz continuous and \eqref{SFS-W2-eq:g-lower-bounded} holds.
    { Then Assumption \ref{SFS-W2-ass:f-in-C^2} is satisfied with
$
L'_f := \big( 1 + \tfrac{3L_g}{\rho} + \tfrac{2 L_g^2}{\rho^2} \big) \tfrac{L_g \beta}{\rho}$ and $
\widetilde{L}_f := \big( 1 + \tfrac{L_g}{\rho} \big) \tfrac{L_g \beta^{3/2}}{2\rho}.
$}
{ Moreover, we have
\begin{equation} \label{SFS-W2-prop:Df-bound}
    \|Df_{\beta}(x,t)v_1\|
    \leq 
    L_f 
    \|v_1\|,
    \quad
    \forall\, x, v_1 \in \mathbb{R}^d, \, ~ t \in [0,1],
\end{equation}
where $L_f := \big(1+\tfrac{L_g}{\rho}
    \big)
    \tfrac{\beta L_g}{\rho}$}.
\end{prop}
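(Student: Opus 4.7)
The plan is to derive the three bounds by direct quotient-rule differentiation applied to the representation
\[
f_{\beta}(x,t) \;=\; \beta\,\frac{B(x,t)}{A(x,t)},\qquad A(x,t) := Q^{\beta}_{1-t}g_{\beta}(x),\qquad B(x,t) := \mathbb{E}_{\xi}\bigl[\nabla g_{\beta}(x+\sqrt{(1-t)\beta}\,\xi)\bigr],
\]
together with three ingredients supplied by the hypotheses. First, $A(x,t)\geq\rho$, inherited verbatim from $g_{\beta}\geq\rho$. Second, the $L_g$-Lipschitzness of $g_{\beta},\nabla g_{\beta},\nabla^{2}g_{\beta}$ yields the operator-norm bounds $\|\nabla^{k}g_{\beta}\|_{\mathrm{op}}\leq L_g$ for $k=1,2,3$, as well as the pointwise consequence $\|B\|\leq L_g$. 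Third, the Gaussian weight together with these bounds legitimises differentiation under the integral in both $x$ and $t$ via dominated convergence; this is routine once checked for $\partial_{t}$, since the worst integrand is $L_g\|\xi\|$ times a finite factor in $(1-t)^{-1/2}$, which is Gaussian-integrable for every fixed $t\in[0,1)$.

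For the state derivatives I would apply the quotient rule directly. The first derivative reads
\[
Df_{\beta}(x,t)\,v_1 \;=\; \beta\,\frac{D_{x}B(x,t)\,v_1}{A(x,t)} \;-\; \beta\,\frac{B(x,t)\bigl[D_{x}A(x,t)\,v_1\bigr]}{A(x,t)^{2}},
\]
and the estimates $\|D_{x}B\cdot v_1\|\leq L_g\|v_1\|$, $|D_{x}A\cdot v_1|\leq L_g\|v_1\|$, $\|B\|\leq L_g$, $A\geq\rho$ give $\|Df_{\beta}(x,t)v_1\|\leq(1+L_g/\rho)(\beta L_g/\rho)\|v_1\|=L_f\|v_1\|$, which is \eqref{SFS-W2-prop:Df-bound}. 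Differentiating once more and grouping terms, one finds
\[
D^{2}f_{\beta}(v_1,v_2)= \beta\Bigl[\tfrac{D_{x}^{2}B(v_1,v_2)}{A} - \tfrac{(D_{x}B\,v_1)(D_{x}A\,v_2)+(D_{x}B\,v_2)(D_{x}A\,v_1)+B\,D_{x}^{2}A(v_1,v_2)}{A^{2}} + \tfrac{2B\,(D_{x}A\,v_1)(D_{x}A\,v_2)}{A^{3}}\Bigr],
\]
where now $\|D_{x}^{2}B(v_1,v_2)\|\leq L_g\|v_1\|\|v_2\|$ invokes the new hypothesis that $\nabla^{2}g_{\beta}$ is $L_g$-Lipschitz. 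Summing the five contributions produces exactly $(L_g\beta/\rho)(1+3L_g/\rho+2L_g^{2}/\rho^{2})\|v_1\|\|v_2\|=L'_f\|v_1\|\|v_2\|$, establishing \eqref{SFS-W2-eq:f-c-2}.

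The only genuinely delicate ingredient is the singular time-derivative bound \eqref{SFS-W2-eq:f-t-1-2}, which is the source of the $(1-t)^{-1/2}$ difficulty highlighted in the paper. Writing $h(t):=\sqrt{(1-t)\beta}$ so that $h'(t)=-\sqrt{\beta}/(2\sqrt{1-t})$, differentiation under $\mathbb{E}_{\xi}$ gives
\[
\partial_{t}A(x,t) = h'(t)\,\mathbb{E}_{\xi}\bigl[\langle\nabla g_{\beta}(x+h(t)\xi),\xi\rangle\bigr],\qquad \partial_{t}B(x,t) = h'(t)\,\mathbb{E}_{\xi}\bigl[\nabla^{2}g_{\beta}(x+h(t)\xi)\,\xi\bigr].
\]
Both are dominated by $L_g|h'(t)|\mathbb{E}_{\xi}[\|\xi\|]\leq L_g\sqrt{\beta}\sqrt{d}/(2\sqrt{1-t})$, using $\mathbb{E}_{\xi}[\|\xi\|]\leq\sqrt{d}$. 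Substituting into $\partial_{t}f_{\beta}=\beta(\partial_{t}B\cdot A - B\cdot\partial_{t}A)/A^{2}$ and invoking $A\geq\rho$, $\|B\|\leq L_g$ delivers
\[
\|\partial_{t}f_{\beta}(x,t)\| \;\leq\; \Bigl(1+\tfrac{L_g}{\rho}\Bigr)\tfrac{L_g\beta^{3/2}}{2\rho}\,\tfrac{\sqrt{d}}{\sqrt{1-t}} \;=\; \widetilde{L}_f\,\tfrac{\sqrt{d}}{\sqrt{1-t}},
\]
which is precisely \eqref{SFS-W2-eq:f-t-1-2}. The dimension factor $\sqrt{d}$ arises from the single $\xi$ pulled out by $\partial_{t}$, and the singular factor $(1-t)^{-1/2}$ from $h'(t)$. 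I do not expect any obstacle beyond careful bookkeeping of constants; the only non-automatic Lipschitz bound used is that on $\nabla^{2}g_{\beta}$, which is exactly why the present proposition needs the stronger hypothesis $g_{\beta}\in\mathcal{C}^{3}$ absent from Assumption \ref{SFS-W2-ass:g-nabla-g-lip}.
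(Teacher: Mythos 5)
Your proposal is correct and follows essentially the same route as the paper's Appendix~\ref{SFS-W2-appendix:g-satisfy-condition}: the Lipschitz hypotheses on $g_{\beta},\nabla g_{\beta},\nabla^2 g_{\beta}$ are converted into uniform operator-norm bounds $\|\nabla^k Q^{\beta}_{1-t}g_{\beta}\|\leq L_g$ ($k=1,2,3$) by differentiating under the Gaussian expectation, the state-derivative bounds follow from the quotient rule with $Q^{\beta}_{1-t}g_{\beta}\geq\rho$ (your five-term grouping of $D^2f_{\beta}$ is just the expansion of the paper's third term), and the time derivative is handled by differentiating the Stein-form representation, producing the same factor $h'(t)=-\sqrt{\beta}/(2\sqrt{1-t})$ and the bound $\mathbb{E}_{\xi}[\|\xi\|]\leq\sqrt{d}$. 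All three constants $L_f$, $L'_f$, $\widetilde{L}_f$ match the paper's.
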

The proof of Proposition \ref{SFS-W2-prop:g-satisfy-condition} is straightforward and postponed to Appendix \ref{SFS-W2-appendix:g-satisfy-condition}. {In what follows, we provide sufficient conditions on the potential $V$ of the target distribution, to ensure the smoothness assumptions required in Proposition \ref{SFS-W2-prop:g-satisfy-condition}. 

\begin{prop}
\label{sfs-w2-prop:ass-v}
Let $V \in \mathcal{C}^3(\mathbb{R}^d,\mathbb{R})$ and let the following inequalities hold:
\begin{equation}
\label{sff-w2-eq:v-condition}
    \begin{aligned}
        \sup_{x
        \in\mathbb{R}^d}
        \big\|
        \beta^{-1}
        x
        -
        \nabla V(x)
        \big\|^i
        \cdot
        \exp
        \big(
        -V(x)
        +
        \tfrac{\|x\|^2}
        {2\beta}
        \big)
        & < 
        \infty,
        \:
        i \in \{ 1,2,3 \},
        \\
        \sup_{x
        \in\mathbb{R}^d}
        \|
        \beta^{-1}
        \mathbf{I}_d
        -
        \nabla^2 V(x)
        \|
        \cdot
        \exp
        \big(
        -V(x)
        +
        \tfrac{\|x\|^2}
        {2\beta}
        \big)
        & < 
        \infty,\\
        \sup_{x
        \in\mathbb{R}^d}
        \|
        \nabla^3 V(x)
        \|
        \cdot
        \exp
        \big(
        -V(x)
        +
        \tfrac{\|x\|^2}
        {2\beta}
        \big)
        & < 
        \infty,\\
\sup_{x\in\mathbb{R}^d}
\|
\beta^{-1}
\mathbf{I}_d
-
\nabla^2 V(x)
\|
\cdot
\|
\beta^{-1}
x
-
\nabla V(x)
\|
\cdot
\exp
\big(
-V(x)
+
\tfrac{\|x\|^2}
{2\beta}
\big)
& < 
\infty.
\end{aligned}
\end{equation}
Then the functions $g_{\beta},\nabla g_{\beta}$ and $\nabla^2 g_{\beta}$ for any 
$\beta \in (0,\infty )$ are $L_g$-Lipschitz continuous.
\end{prop}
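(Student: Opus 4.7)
The plan is to establish that $\nabla g_{\beta}$, $\nabla^2 g_{\beta}$, and $\nabla^3 g_{\beta}$ are all uniformly bounded on $\mathbb{R}^d$. Since these are, respectively, the derivatives of $g_{\beta}$, $\nabla g_{\beta}$, $\nabla^2 g_{\beta}$, a standard mean value theorem argument then yields the claimed Lipschitz continuity with a common constant $L_g$ chosen as the maximum of the three supremum bounds. The appropriate regularity for applying the mean value theorem is guaranteed by the assumption $V \in \mathcal{C}^3(\mathbb{R}^d,\mathbb{R})$, which transfers to $g_{\beta} \in \mathcal{C}^3(\mathbb{R}^d,\mathbb{R})$.

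The key observation is that
$g_{\beta}(x) = \tfrac{(2\pi\beta)^{d/2}}{C}\exp(h(x))$
with $h(x) := -V(x) + \tfrac{\|x\|^2}{2\beta}$, so that
\begin{align*}
    \nabla h(x)   & = \beta^{-1}x - \nabla V(x), \\
    \nabla^2 h(x) & = \beta^{-1}\mathbf{I}_d - \nabla^2 V(x), \\
    \nabla^3 h(x) & = -\nabla^3 V(x).
\end{align*}
Applying the chain and Leibniz rules to $\exp(h)$ up to three times yields
\begin{align*}
    \nabla g_{\beta}   & = g_{\beta}\,\nabla h, \\
    \nabla^2 g_{\beta} & = g_{\beta}\bigl[\nabla h\otimes\nabla h + \nabla^2 h\bigr], \\
    \nabla^3 g_{\beta} & = g_{\beta}\bigl[(\nabla h)^{\otimes 3} + \nabla h\otimes\nabla^2 h + \nabla^2 h\otimes\nabla h + \nabla h\otimes\nabla^2 h + \nabla^3 h\bigr].
\end{align*}

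To conclude, I would take operator (or Frobenius) norms term by term, invoke the triangle inequality, and absorb the exponential prefactor $g_{\beta}(x) \propto \exp(-V(x) + \tfrac{\|x\|^2}{2\beta})$ into the suprema in \eqref{sff-w2-eq:v-condition}. Concretely: the first hypothesis with $i=1$ bounds $\|\nabla g_{\beta}\|_{\infty}$; the first with $i=2$ together with the second bound the two summands of $\|\nabla^2 g_{\beta}\|_{\infty}$; and the first with $i=3$, the third hypothesis, and the fourth hypothesis together bound, respectively, the $(\nabla h)^{\otimes 3}$ term, the $\nabla^3 h$ term, and the three mixed $\nabla h\otimes \nabla^2 h$ terms appearing in $\|\nabla^3 g_{\beta}\|_{\infty}$. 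The maximum of the resulting finite suprema (up to the constants $(2\pi\beta)^{d/2}/C$ and small combinatorial factors from the Leibniz rule) furnishes $L_g$. The only non-routine part is the bookkeeping of the Leibniz expansion of $\nabla^3\exp(h)$ and matching each tensor term to the correct hypothesis in \eqref{sff-w2-eq:v-condition}; no further analytic subtlety is required.
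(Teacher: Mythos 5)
Your proposal is correct and follows essentially the same route as the paper: the paper also writes $g_{\beta}(x)-g_{\beta}(y)=\int_0^1\nabla g_{\beta}(y+r(x-y))(x-y)\,\mathrm{d}r$ (and likewise for $\nabla g_{\beta}$, $\nabla^2 g_{\beta}$) and then bounds the relevant derivative uniformly by matching each term of the Leibniz expansion of the derivatives of $\exp\big(-V(x)+\tfrac{\|x\|^2}{2\beta}\big)$ to the corresponding hypothesis in \eqref{sff-w2-eq:v-condition}. Your bookkeeping of the third-derivative expansion (one $(\nabla h)^{\otimes 3}$ term, three mixed $\nabla h\otimes\nabla^2 h$ terms, one $\nabla^3 h$ term) is exactly the step the paper omits with ``proved similarly,'' so nothing is missing.
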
}

{ The proof of Proposition \ref{sfs-w2-prop:ass-v} is put in Appendix \ref{sfs-w2-app:ass-v}. It is not difficult to see, for the Gaussian mixture distribution \eqref{SFS-W2-eq:def-Gaussian-mixture-distribution} being the target distribution, $g_{\beta},\nabla g_{\beta} \text{ and }\nabla^2 g_{\beta}$ are Lipschitz continuous on the condition that $
\Sigma_i^{-1}
-
\beta^{-1}
\mathbf{I}_d, \, i=1,{\ldots},\kappa,$ 
are positive {matrices} (see also \cite{MRjiao} for the case $\beta = 1$).}
%

{ 
However, to check the Lipschitzness of the drift of SDEs for sampling is not an easy task and a similar problem exists in Langevin type samplers \cite{Chen2023Improved}. To guarantee the Lipschitzness of the drift $f_\beta$, we additionally put the lower bound assumption \eqref{SFS-W2-eq:g-lower-bounded} on $g_{\beta}$, which seems restrictive but was commonly used in the literature  \cite{LJ2013,MRjiao,Ruzayqat2023Unbiased,tzen2019theoretical}.
To remedy it, one can regularize $\mu$ by mixing it with $\mathcal{N}(0,\beta\,\mathbf{I}_d)$, similarly to \cite{MRjiao}.
Specifically, for $\rho \in (0, 1)$, we introduce a modified target distribution $\mu_{\rho} := (1-\rho)\mu + \rho \,\mathcal{N}(0,\beta \,\mathbf{I}_d)$,
whose corresponding density ratio is thus given by $g_{\beta,\rho} 
= 
\mathrm{d}
\mu_{\rho} / 
\mathrm{d} 
\mathcal{N}(0,\beta\,\mathbf{I}_d) 
= 
(1-\rho)g_{\beta} 
+ \rho$.
It follows directly that $g_{\beta,\rho} \geq \rho > 0$, i.e., \eqref{SFS-W2-eq:g-lower-bounded} holds true. 
Furthermore,  the Lipschitz continuity of $g_{\beta}$ and $\nabla g_{\beta}$ ensures that $g_{\beta,\rho}$ and $\nabla g_{\beta,\rho}$ are also Lipschitz continuous.}
Thanks to the above assumptions, 
we are able to derive an enhanced non-asymptotic error bound in the $L^2$-Wasserstein distance for the sampling algorithm \eqref{SFS-W2-eq:Euler-scheme-no-mc}.
\begin{thm}
\label{SFS-W2-thm:main-rerult-no-mc}
     (Main result: enhanced error bounds with exact drift) 
     Let $N \in\mathbb{N}$ and let Assumptions \ref{SFS-W2-ass:absolutely-continuous-distribution}, \ref{SFS-W2-ass:g-nabla-g-lip}, \ref{SFS-W2-ass:f-in-C^2} hold. 
     Let $\left\{Y_{t_n}\right\}_{n \in [N]_0}$ be produced by the sampler \eqref{SFS-W2-eq:Euler-scheme-no-mc} using the uniform step-size $h=\tfrac{1}{N}$. 
     Then there exists a constant $C$ independent of $d$ and $h$, such that,
    \begin{align*}
        \mathcal{W}_2
        \left(
        \mathcal{L}aw
        \left(Y_{1}\right), \mu\right)
        \leq 
        { 
        C d h}.
    \end{align*}
\end{thm}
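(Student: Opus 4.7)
I would establish the bound by strongly coupling the exact process $\{X_t\}_{t\in[0,1]}$ from \eqref{SFS-W2-eq:f-sde} and the Euler iterates $\{Y_{t_n}\}_{n\in[N]_0}$ from \eqref{SFS-W2-eq:Euler-scheme-no-mc}, driving both by the common Brownian motion $\{W_t\}$ on the same filtered probability space. Since this coupling places $Y_1$ and $X_1\sim\mu$ on one space,
\begin{align*}
\mathcal{W}_2(\mathcal{L}aw(Y_1), \mu) \,\leq\, \bigl(\mathbb{E}\|X_1 - Y_1\|^2\bigr)^{1/2},
\end{align*}
so it suffices to control the strong error $e_n := X_{t_n} - Y_{t_n}$ in $L^2(\Omega;\mathbb{R}^d)$. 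Subtracting the two defining equations and isolating the one-step consistency residual,
\begin{align*}
e_{n+1} = e_n + h\bigl[f_\beta(X_{t_n}, t_n) - f_\beta(Y_{t_n}, t_n)\bigr] + I_n,
\qquad
I_n \,:=\, \int_{t_n}^{t_{n+1}} \bigl[f_\beta(X_s, s) - f_\beta(X_{t_n}, t_n)\bigr]\, \mathrm{d} s.
\end{align*}
The bracketed term is Lipschitz-controlled by $hL_f\|e_n\|$ and drives a discrete Gronwall-type loop; the whole analysis therefore reduces to obtaining a sharp aggregated $L^2$-bound on $I_n$ that yields $\|e_N\|_{L^2}=O(dh)$.

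The key device is an It\^o-Taylor expansion of the integrand of $I_n$. Under Assumption~\ref{SFS-W2-ass:f-in-C^2} (justified by Proposition~\ref{SFS-W2-prop:g-satisfy-condition}), the It\^o formula applied component-wise to $f_\beta(X_s, s)$ gives
\begin{align*}
f_\beta(X_s, s) - f_\beta(X_{t_n}, t_n) = \int_{t_n}^s \bigl[\partial_u f_\beta + Df_\beta \cdot f_\beta + \tfrac{\beta}{2}\Delta f_\beta\bigr](X_u, u)\, \mathrm{d} u + \sqrt{\beta}\int_{t_n}^s Df_\beta(X_u, u)\, \mathrm{d} W_u,
\end{align*}
and swapping the order of the double integral by Fubini produces the decisive weight $(t_{n+1}-u)$:
\begin{align*}
I_n = \int_{t_n}^{t_{n+1}} (t_{n+1} - u)\bigl[\partial_u f_\beta + Df_\beta f_\beta + \tfrac{\beta}{2}\Delta f_\beta\bigr](X_u, u)\, \mathrm{d} u + \sqrt{\beta}\int_{t_n}^{t_{n+1}} (t_{n+1} - u)\, Df_\beta(X_u, u)\, \mathrm{d} W_u.
\end{align*}
The second summand $I_n^{\mathrm{mart}}$ is an $\mathcal{F}_{t_{n+1}}$-martingale increment, mean-zero given $\mathcal{F}_{t_n}$. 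It\^o isometry combined with $\|Df_\beta\|_{\mathrm{F}} \leq \sqrt{d}\,L_f$ (Proposition~\ref{SFS-W2-prop:g-satisfy-condition}) gives $\mathbb{E}\|I_n^{\mathrm{mart}}\|^2 \leq \tfrac{1}{3}\beta dL_f^2 h^3$, so that after propagation through the Lipschitz-in-state recursion and exploitation of martingale orthogonality, the total $L^2$-contribution of $\{I_n^{\mathrm{mart}}\}_n$ to $e_N$ is $O(\sqrt{d}\,h)$. The bounded deterministic pieces $Df_\beta f_\beta$ and $\tfrac{\beta}{2}\Delta f_\beta$ (with $\|\Delta f_\beta\|\le dL'_f$ from the operator-norm definition) yield an $O(dh^2)$ per-step contribution, aggregating via triangle and Gronwall to $O(dh)$.

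The principal obstacle is the singular term $\partial_u f_\beta$: the bound \eqref{SFS-W2-eq:f-t-1-2} only gives $\|\partial_u f_\beta(X_u, u)\| \le \widetilde{L}_f d^{1/2}/\sqrt{1-u}$, and a na\"ive $L^2$-estimate of $\int_{t_n}^{t_{n+1}}\partial_u f_\beta\,\mathrm{d} u$ would produce a divergent $\int \mathrm{d} u/(1-u)$ on the terminal block and force the order-half convergence of \cite{MRjiao}. The $(t_{n+1}-u)$-weight harvested by the Fubini step is exactly what tames this. A direct calculation yields
\begin{align*}
\int_{t_n}^{t_{n+1}} \frac{t_{n+1} - u}{\sqrt{1 - u}}\, \mathrm{d} u \,\leq\, \frac{h^{2}}{\sqrt{1 - t_{n+1}}} \quad (0 \leq n \leq N-2), \qquad \int_{t_{N-1}}^{1} \sqrt{1-u}\, \mathrm{d} u \,=\, \tfrac{2}{3}h^{3/2},
\end{align*}
so that the aggregated singular contribution to $\sum_n\|I_n\|_{L^2}$ is bounded by
\begin{align*}
\widetilde{L}_f\, d^{1/2}\Bigl(\tfrac{2}{3}h^{3/2} + h^{3/2}\sum_{k=1}^{N-1} k^{-1/2}\Bigr) \,\leq\, C d^{1/2}\, h,
\end{align*}
using $\sum_{k=1}^{N-1} k^{-1/2} \le 2\sqrt{N}=2/\sqrt{h}$. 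Combining all three estimates with the moment bounds of Lemma~\ref{SFS-W2-lem:x_t-moments-bounded} in the discrete Gronwall recursion delivers $\mathbb{E}\|e_N\|^2 \le C d^2 h^2$, and the coupling inequality then yields $\mathcal{W}_2(\mathcal{L}aw(Y_1),\mu)\le \|e_N\|_{L^2} \le Cdh$.
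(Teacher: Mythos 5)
Your proposal is correct and follows essentially the same route as the paper: unroll the error recursion so that the residuals are summed before squaring, expand each residual by the It\^o formula, kill the cross terms of the stochastic-integral parts by martingale orthogonality, and absorb the $(1-t)^{-1/2}$ singularity of $\partial_t f_\beta$ through the extra factor of length $h$ (equivalently the weight $t_{n+1}-u$), which renders the aggregated singular contribution $O(d^{1/2}h)$. The only cosmetic differences are that the paper keeps the iterated integral and uses $\int_0^1(1-s)^{-1/2}\,\mathrm{d}s=2$ uniformly over all blocks instead of your explicit Fubini weight and separate treatment of the terminal block.
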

%
Its proof is deferred to Section \ref{SFS-W2-section:Proof of Theorem}. As a direct consequence, we obtain the following result, 
whose proof is easy and thus omitted.
\begin{prop}
\label{SFS-W2-prop:mixing-time-no-mcmc}
    Let Assumptions \ref{SFS-W2-ass:absolutely-continuous-distribution}, \ref{SFS-W2-ass:g-nabla-g-lip} and \ref{SFS-W2-ass:f-in-C^2} hold. 
    To achieve a given precision level $\epsilon>0$ in the $L^2$-Wasserstein distance, a required number of { evaluations of the drift $f_{\beta}$ and the number of Brownian increments} of the SFS algorithm \eqref{SFS-W2-eq:Euler-scheme-no-mc} is of order { $\mathcal{O}\left(\frac{d}{\epsilon}\right)$.}
\end{prop}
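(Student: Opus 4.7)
My plan is to bound the $L^{2}$-Wasserstein distance directly by the strong $L^{2}$-error $\bigl(\mathbb{E}\|X_{1}-Y_{1}\|^{2}\bigr)^{1/2}$, which is permitted by \eqref{SFS-W2-eq:def-w2}. Set $E_{n}:=X_{t_{n}}-Y_{t_{n}}$ and write the discrete error recursion
\begin{align*}
 E_{n+1}
 =
 E_{n}
 +
 h\bigl(f_{\beta}(X_{t_{n}},t_{n})-f_{\beta}(Y_{t_{n}},t_{n})\bigr)
 +
 R_{n},
 \qquad
 R_{n}
 :=
 \int_{t_{n}}^{t_{n+1}}\!
 \bigl(f_{\beta}(X_{s},s)-f_{\beta}(X_{t_{n}},t_{n})\bigr)\,\mathrm{d}s.
\end{align*}
Squaring this recursion, using the Lipschitz bound \eqref{SFS-W2-eq:f-lipschitz-condition}, and noting that both $E_{n}$ and $f_{\beta}(X_{t_{n}},t_{n})-f_{\beta}(Y_{t_{n}},t_{n})$ are $\mathcal{F}^{W}_{t_{n}}$-measurable, I will derive an inequality of the form $\mathbb{E}\|E_{n+1}\|^{2}\leq(1+Ch)\mathbb{E}\|E_{n}\|^{2}+\tfrac{C}{h}\mathbb{E}\|R_{n}^{\mathrm{det}}\|^{2}+C\mathbb{E}\|R_{n}^{\mathrm{mart}}\|^{2}$, in which the cross terms with the martingale part of $R_{n}$ vanish by conditioning on $\mathcal{F}^{W}_{t_{n}}$. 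Discrete Gronwall will then reduce the theorem to cumulative bounds of order $d^{2}h^{2}$ on each of $\tfrac{1}{h}\sum_{n}\mathbb{E}\|R_{n}^{\mathrm{det}}\|^{2}$ and $\sum_{n}\mathbb{E}\|R_{n}^{\mathrm{mart}}\|^{2}$.

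The main step is an It\^o--Taylor expansion of $R_{n}$. Assumption \ref{SFS-W2-ass:f-in-C^2} makes $f_{\beta}$ smooth enough that It\^o's formula applied to $f_{\beta}(X_{s},s)$ on $[t_{n},s]$ yields
\begin{align*}
 f_{\beta}(X_{s},s)-f_{\beta}(X_{t_{n}},t_{n})
 =
 \int_{t_{n}}^{s}\mathcal{A}f_{\beta}(X_{u},u)\,\mathrm{d}u
 +
 \sqrt{\beta}\int_{t_{n}}^{s}Df_{\beta}(X_{u},u)\,\mathrm{d}W_{u},
\end{align*}
where $\mathcal{A}=\partial_{t}+(Df_{\beta})f_{\beta}+\tfrac{\beta}{2}\sum_{i=1}^{d}\partial_{i}^{2}$ is the infinitesimal generator associated with \eqref{SFS-W2-eq:f-sde}. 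Integrating in $s$ and a stochastic Fubini argument produce the decomposition $R_{n}=R_{n}^{\mathrm{det}}+R_{n}^{\mathrm{mart}}$ with $R_{n}^{\mathrm{mart}}=\sqrt{\beta}\int_{t_{n}}^{t_{n+1}}(t_{n+1}-u)Df_{\beta}(X_{u},u)\,\mathrm{d}W_{u}$. The martingale part will be controlled by the It\^o isometry together with $\|Df_{\beta}\|_{\mathrm{F}}\leq\sqrt{d}\,L_{f}$ from Proposition \ref{SFS-W2-prop:g-satisfy-condition}; the spatial pieces $(Df_{\beta})f_{\beta}$ and $\sum_{i}\partial_{i}^{2}f_{\beta}$ appearing in $R_{n}^{\mathrm{det}}$ will be handled through Assumption \ref{SFS-W2-ass:f-in-C^2}, the linear growth \eqref{SFS-W2-eq:f-linear-growth}, and the moment estimate \eqref{SFS-W2-eq:x_t-moments-bounded}, producing the correct $d^{2}h^{2}$ cumulative bound.

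The hard part is precisely the time-singular contribution $\int_{t_{n}}^{t_{n+1}}\!\int_{t_{n}}^{s}\partial_{t}f_{\beta}(X_{u},u)\,\mathrm{d}u\,\mathrm{d}s$: the pointwise estimate \eqref{SFS-W2-eq:f-t-1-2} diverges as $u\uparrow 1$, and using it crudely on the terminal interval $[1-h,1]$ would cost a full factor of $\sqrt{h}$ and destroy the improved rate. The trick I would rely on is that the double integration entirely tames the singularity through an exact primitive. Indeed, $\int_{t_{n}}^{s}(1-u)^{-1/2}\,\mathrm{d}u=2\bigl(\sqrt{1-t_{n}}-\sqrt{1-s}\bigr)$; a further substitution $v=\sqrt{1-s}$ in the outer $s$-integral produces a deterministic bound proportional to $(a-b)^{2}(a+2b)$, where $a=\sqrt{1-t_{n}}$ and $b=\sqrt{1-t_{n+1}}$. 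Because $a-b=h/(a+b)$, this quantity is of order $h^{2}/\sqrt{1-t_{n}}$ on interior intervals and $h^{3/2}$ on the terminal interval, so summing the squares across $n$ gives a total of order $h^{3}$ (up to a harmless logarithmic factor from $\sum_{k}1/k$). Inserting this into the squared-error recursion, combining with the spatial and martingale contributions, and applying discrete Gronwall then gives $\mathbb{E}\|E_{N}\|^{2}\leq Cd^{2}h^{2}$, whence $\mathcal{W}_{2}(\mathcal{L}aw(Y_{1}),\mu)\leq Cdh$ via \eqref{SFS-W2-eq:def-w2}.
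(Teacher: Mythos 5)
First, a scope remark: the proposition you were asked to prove is only the complexity corollary of Theorem \ref{SFS-W2-thm:main-rerult-no-mc}; the paper's (omitted) proof is the one-line inversion $Cdh\le\epsilon\Rightarrow N=1/h=\mathcal{O}(d/\epsilon)$, each step of \eqref{SFS-W2-eq:Euler-scheme-no-mc} costing exactly one drift evaluation and one Brownian increment. What you have sketched is instead a proof of the error bound itself (and you never state the final inversion, though that part is trivial). So the substantive question is whether your re-derivation of the $\mathcal{O}(dh)$ bound is correct.

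It contains a genuine gap: the logarithmic factor you dismiss as ``harmless'' is precisely what the paper's argument is structured to eliminate, and it is not harmless for the stated $\mathcal{O}(d/\epsilon)$ complexity. Your step-by-step recursion forces you to control $h^{-1}\sum_n\mathbb{E}\|R_n^{\mathrm{det}}\|^2$, a sum of \emph{squares} of per-interval quantities. For the time-singular piece your own computation gives a per-interval $L^2$ bound of order $\sqrt{d}\,h^2(1-t_n)^{-1/2}$; squaring, summing and weighting by $h^{-1}$ yields $d\,h^3\sum_n(1-t_n)^{-1}\approx d\,h^2|\ln h|$, hence $\mathcal{W}_2\le C\big(dh+\sqrt{d}\,h\sqrt{|\ln h|}\big)$ and a step count of order $\tfrac{d}{\epsilon}+\tfrac{\sqrt{d}}{\epsilon}\sqrt{\ln(1/\epsilon)}$, which exceeds $\mathcal{O}(d/\epsilon)$ whenever $d$ is small compared with $\ln(1/\epsilon)$. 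The paper avoids this (Proposition \ref{SFS-W2-prop:Upper-mean-square error-bounds} and Subsection \ref{sfs-w2-subsec:proof-thm}) by \emph{not} unrolling the recursion one step at a time: it writes $X_{t_n}-Y_{t_n}=h\sum_{i}(\cdots)+\sum_i\mathcal{R}_i$ and bounds $\big\|\sum_i\mathcal{R}_i\big\|_{L^2}$ as a whole, applying the Minkowski inequality to the \emph{sum} of the time-derivative double integrals so that the singularity is absorbed into the single convergent integral $\int_0^1(1-s)^{-1/2}\,\mathrm{d}s=2$, with no logarithm and only a factor $d$ (not $d^2$) from that term. To close your argument you would need to restructure the global error estimate in this way (or otherwise remove the $|\ln h|$); the remaining ingredients of your outline --- vanishing of the martingale cross terms by conditioning, the It\^o isometry with $\|Df_\beta\|_{\mathrm F}\le\sqrt{d}\,L_f$, and the $\mathcal{O}(d^2h^4)$ per-step bounds for the $(Df_\beta)f_\beta$ and Hessian-trace terms --- are sound and match the paper's.
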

Compared to \cite[Theorem III.1]{MRjiao}, our refined error analysis guarantees an enhanced convergence rate of order one with respect to the time step-size $h>0$, but at the cost of an increasing dimension dependence that scales linearly with $d$. This also happened in the non-asymptotic error analysis of LMC (see, e.g., \cite{DA2017,DAKA2019,DAME2019,mou2022improved}). 
%
%
{In Table \ref{tab:comparison-1}, we compare our results with those in \cite{MRjiao}, 
in terms of the error bounds, the total number of evaluations
of the drift $f_{\beta}$ and the number of Brownian increments
required to achieve a given precision level $\epsilon>0$ in the $L^2$-Wasserstein distance.
Evidently, the computational cost of the new sampler is significantly reduced.}
%
%
\begin{table}[h]
\caption{A comparison of non-asymptotic error bounds in the $L^2$-Wasserstein distance for SFS \eqref{SFS-W2-eq:Euler-scheme-no-mc}.}
\vspace{-2pt} 
\centering
\renewcommand{\arraystretch}{1.2} 
\small 
\begin{tabular}{ccccc}
\Xhline{1pt}  
 & \makecell[c]{Additional\\ condition\textsuperscript{1}}
 & \makecell[c]{Error\\ bound} 
 & \makecell[c]{ {Number of}\\ 
 { evaluations of drift} }
 & \makecell[c]{ {Number of}
 \\
 { Brownian increments} }
 \\ \hline
\cite{MRjiao} 
& No 
& $\mathcal{O}(\sqrt{dh})$ 
& $\mathcal{O}(d/\epsilon^{2})$
& { $\mathcal{O}(d/\epsilon^{2})$}
\\ 
This work 
& Yes 
& { 
$\mathcal{O}(dh)$} 
& { 
$\mathcal{O}(d/\epsilon)$}
& { 
$\mathcal{O}(d/\epsilon)$}
\\ 
\Xhline{1pt} 
\end{tabular}
\label{tab:comparison-1}
\caption*{\textsuperscript{1}
Smoothness assumptions other
than the Lipschitz condition for the drift.}
\end{table}

Next, we give examples when the drift has a known closed-form and can be exactly calculated.
\newline

\noindent
\textbf{Examples with exact drift: Gaussian mixture distributions.}
\newline

As validated in \cite{MRjiao}, in the case of Gaussian mixture distributions \cite{LLMW2021},
\begin{align}
\label{SFS-W2-eq:def-Gaussian-mixture-distribution}
        \mu
        =
        \sum_{i=1}^\kappa \theta_i 
        \,\mathcal{N}(\alpha_i, \Sigma_i),
        \quad \sum_{i=1}^\kappa \theta_i=1 
        \text { and } 
        0 \leq \theta_i \leq 1, 
        \quad i=1, \ldots, \kappa,
\end{align}
the drift term of the Schr\"odinger-F\"ollmer diffusion \eqref{SFS-W2-eq:f-sde} is given by
\begin{align}
\label{SFS-W2-eq:drift-GMD}
    f_{\beta}(x, t)
    =
    \frac{\beta \sum_{i=1}^\kappa \theta_i 
    \mathbb{E}_{\xi} 
    \left[
    \nabla g_{\beta,i}(x+\sqrt{(1-t)\beta} \,\xi)
    \right]}
    {\sum_{i=1}^\kappa \theta_i \mathbb{E}_{\xi} 
    \left[ g_{\beta,i}(x+\sqrt{(1-t)\beta} \,\xi)
    \right]}, 
    \quad \xi \sim \gamma^d.
\end{align}
Here $\kappa \in \mathbb{N}$ is the number of mixture components, $\mathcal{N}(\alpha_i, \Sigma_i)$ is the $i$-th Gaussian component with mean $\alpha_i \in \mathbb{R}^d$ and covariance matrix $\Sigma_i \in \mathbb{R}^{d \times d}$.
{ 
Following an argument similar as that in \cite[Appendix C]{MRjiao},} we obtain
\begin{align*}
         & 
         \mathbb{\,E}_{\xi}
         \Big[
         \nabla g_{\beta,i}(x+\sqrt{(1-t)\beta} \,\xi)
         \Big]\\
         & \quad =
         \tfrac{\Sigma_i^{-1} \alpha_i+\left(\tfrac{1}{\beta}\mathbf{I}_d-\Sigma_i^{-1}\right)
         \left[
         t \mathbf{I}_d+(1-t)\beta \,\Sigma_i^{-1}\right]^{-1}\left[(1-t)\beta \Sigma_i^{-1} \alpha_i+x\right]}
         {\left\|t \Sigma_i+(1-t)\beta \,\mathbf{I}_d\right\|^{1 / 2}} b_{\beta,i}(x, t), \\
         & 
         \mathbb{E}_{\xi}
         \left[
         g_{\beta,i}(x+\sqrt{(1-t)\beta} \,\xi)
         \right]
         =
         \tfrac{b_{\beta,i}(x, t)}
         {\left\|t \Sigma_i+(1-t) \beta\mathbf{I}_d\right\|^{1 / 2}},
\end{align*}
where
\begin{align} \label{eq:gaussian-mixture-b-form}
        b_{\beta,i}(x, t)
        = & 
        \exp \left(\tfrac{1}{2(1-t)\beta}
        \Big\|
        \big(t \mathbf{I}_d+(1-t)\beta \,\Sigma_i^{-1}\big)^{-1 / 2}
        \big((1-t)\beta\, \Sigma_i^{-1} \alpha_i+x\big)
        \Big\|^2
        \right)\nonumber \\
        & \cdot \exp \left(-\tfrac{1}{2} \alpha_i^T \Sigma_i^{-1} \alpha_i-\tfrac{1}{2(1-t)\beta}\|x\|^2\right).
\end{align}
{ Inserting these expressions into \eqref{SFS-W2-eq:drift-GMD} gives an explicit representation of the drift $f_{\beta}$.}

\subsection{Schr\"odinger-F\"ollmer sampler with inexact drift}

Unfortunately, { analytically calculating} the drift term $f_{\beta}$ is, in general, intractable when the target distribution $\mu$ is complex.
In this case, one can just get {an} estimator $\widetilde{f}^M_{\beta}$ of the drift $f_{\beta}$, by replacing $\mathbb{E}_{\xi}$ in $f_{\beta}$ with sample mean.
For $M\in \mathbb{N}$, let $\{\xi_{j}\}_{j \in [M]}$ be a family of independent standard Gaussian distributed random variables on the probability space 
$
\left(
\Omega_\xi, 
\mathcal{F}^\xi,
\mathbb{P}_\xi\right)$.
%
The random variables $\{\xi_{j}\}_{j \in [M]}$, independent of the randomness already presented in SDE \eqref{SFS-W2-eq:f-sde}, can be regarded as added random inputs for the approximation of the drift term.
{More precisely}, we approximate $f_{\beta}(x,t_n)$ by $\widetilde{f}^M_{\beta}: \Omega_{\xi} \times \mathbb{R}^d \times \pi_h \rightarrow \mathbb{R}^d$:
{ 
\begin{align}
\label{SFS-W2-eq:f-M-experssion}
\widetilde{f}^M_{\beta}
    (x,t_n)
    & =
    \frac{\frac{\beta }{M} 
    \sum_{j=1}^M
    \left[
    \nabla 
    g_{\beta}
    (x+\sqrt{(1-t_n)\beta} 
    \,
    {\xi}_{j})
    \right]}
    {\frac{1}{M} 
    \sum_{j=1}^M
    \left[
    g_{\beta}
    (x+\sqrt{(1-t_n)\beta} \,{\xi}_{j})
    \right]}
    \nonumber\\
    & =
    \frac{\frac{\beta }{M} 
    \sum_{j=1}^M
    \left[
    \xi_j
    \,
    g_{\beta}(x+\sqrt{(1-t_n)\beta} \,{\xi}_{j})
    \right]}
    {\frac{1}{M} 
    \sum_{j=1}^M
    \left[
    g_{\beta}
    (x+\sqrt{(1-t_n)\beta} \,{\xi}_{j})
    \right]
    \sqrt{(1-t)\beta}}, 
    \quad \xi_{j} \sim \gamma^d,
    \:
    n \in [N-1]_0,
\end{align}
where the second equality stands due to Stein's lemma.}
In this case, we propose another new sampler with temperatures as follows:
\begin{align}
\label{SFS-W2-eq:Euler-scheme-mc}
   \widetilde{Y}^M_{t_{n+1}}
   =
   \widetilde{Y}^M_{t_n}
   +
   h \widetilde{f}^M_{\beta}
   \big(
   \widetilde{Y}^M_{t_n}, t_n
   \big)
   +
   \sqrt{\beta}
   \Delta W_{n}, 
   \quad n\in [N-1]_0,
   \quad \widetilde{Y}^M_0=0,
\end{align}
where the inexact drift $\widetilde{f}^M_{\beta}$ is given by \eqref{SFS-W2-eq:f-M-experssion}. 
%

For the SFS with inexact drift in \cite{MRjiao}, the temperature was taken to be unit (i.e., $\beta = 1$) and $M$ independent Gaussian random variables need to be generated  at each iteration. Instead, the newly proposed sampler \eqref{SFS-W2-eq:Euler-scheme-mc} incorporates a flexible temperature $\beta$ and $M$ independent Gaussian random variables are only generated once and repeatedly used  at each iteration for the sampler  \eqref{SFS-W2-eq:Euler-scheme-mc}.
{ 
We would like to mention that, instead of a single-level Monte Carlo method used here, 
%
the multi-level technique introduced by
Heinrich \cite{heinrich2001multilevel} and Giles \cite{Giles2008Multilevel} might be helpful for the approximation of the drift $f_{\beta}$. This idea is promising but non-trivial,
which we leave as a possible future work.}
In the following proposition, we show that the inexact drift $\widetilde{f}^M_{\beta}$ is also Lipschitz continuous.
\begin{prop}
    Let Assumptions \ref{SFS-W2-ass:absolutely-continuous-distribution} and  \ref{SFS-W2-ass:g-nabla-g-lip} hold. Then 
    for any $n\in [N-1]_0$, the estimator $\widetilde{f}^M_{\beta} \colon
    (\cdot,t_n)
    \rightarrow \mathbb{R}^d$ is uniformly Lipschitz continuous.
    Namely, we have
    \begin{align}
    \label{sfs-w2-eq:til-f-lip}
        \big\|
        \widetilde{f}^M
        _{\beta}(x,t_n)
        -
        \widetilde{f}^M
        _{\beta}(y,t_n)
        \big\|
        \leq
        L_f\|x-y\|,
        \quad
        \forall \, x,y \in \mathbb{R}^d,
    \end{align} 
    { where $L_f := \big(1+\tfrac{L_g}{\rho}
    \big)
    \tfrac{\beta L_g}{\rho}$ is independent of $d$ and $h$.}
\end{prop}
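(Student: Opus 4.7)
The plan is to exploit the quotient structure of $\widetilde{f}^M_{\beta}$ by writing it as $N(x)/D(x)$ with
\[
N(x) := \tfrac{\beta}{M}\sum_{j=1}^M \nabla g_{\beta}\bigl(x+\sqrt{(1-t_n)\beta}\,\xi_j\bigr), \qquad D(x) := \tfrac{1}{M}\sum_{j=1}^M g_{\beta}\bigl(x+\sqrt{(1-t_n)\beta}\,\xi_j\bigr),
\]
and then applying the standard identity
\[
\frac{N(x)}{D(x)}-\frac{N(y)}{D(y)} \;=\; \frac{N(x)-N(y)}{D(y)} \;+\; \frac{N(x)\,\bigl(D(y)-D(x)\bigr)}{D(x)\,D(y)}.
\]
Taking norms reduces the task to bounding three elementary quantities: $\|N(x)-N(y)\|$, $|D(x)-D(y)|$, and $\|N(x)\|/D(x)$.

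First, since the same samples $\{\xi_j\}_{j=1}^M$ appear in $N(x)$ and $N(y)$, the differences $(x+\sqrt{(1-t_n)\beta}\,\xi_j)-(y+\sqrt{(1-t_n)\beta}\,\xi_j)=x-y$ are deterministic. So the $L_g$-Lipschitz continuity of $\nabla g_\beta$ (Assumption \ref{SFS-W2-ass:g-nabla-g-lip}) yields $\|N(x)-N(y)\|\le \beta L_g\|x-y\|$, and the $L_g$-Lipschitz continuity of $g_\beta$ yields $|D(x)-D(y)|\le L_g\|x-y\|$ by the triangle inequality term-by-term. Second, the lower bound $g_\beta\ge \rho>0$ from \eqref{SFS-W2-eq:g-lower-bounded} gives $D(x)\ge \rho$ and $D(y)\ge \rho$ uniformly in $x,y,t_n$ and in the samples.

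The only slightly non-obvious ingredient is the uniform bound on $\|N(x)\|/D(x)$. Here I would observe that $L_g$-Lipschitz continuity of $g_\beta$ implies $\|\nabla g_\beta(z)\|\le L_g$ for every $z\in\mathbb{R}^d$ (as $g_\beta\in\mathcal{C}^1$); hence $\|N(x)\|\le \beta L_g$ by the triangle inequality applied to the average. Combined with $D(x)\ge \rho$, this gives $\|N(x)\|/D(x)\le \beta L_g/\rho$.

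Substituting these three bounds into the identity yields
\[
\bigl\|\widetilde{f}^M_{\beta}(x,t_n)-\widetilde{f}^M_{\beta}(y,t_n)\bigr\| \;\le\; \tfrac{\beta L_g}{\rho}\|x-y\| \;+\; \tfrac{\beta L_g}{\rho}\cdot\tfrac{L_g}{\rho}\|x-y\| \;=\; \Bigl(1+\tfrac{L_g}{\rho}\Bigr)\tfrac{\beta L_g}{\rho}\|x-y\|,
\]
which is exactly $L_f\|x-y\|$ with the constant stated in \eqref{sfs-w2-eq:til-f-lip}. All constants are independent of $d$, $h$, $M$, $t_n$, and the realization of $\{\xi_j\}$, so the inequality holds uniformly as required. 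I do not expect any serious obstacle: the main point is simply to recognize that the Monte Carlo average preserves the ratio structure on which the corresponding Lipschitz argument for the exact drift $f_\beta$ (used in Proposition \ref{SFS-W2-prop-unique-strong-solution}) already relies; the bound on $\|\nabla g_\beta\|_\infty$ deduced from the Lipschitzness of $g_\beta$ is the only subtlety worth highlighting.
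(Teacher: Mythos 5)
Your proof is correct and follows essentially the same quotient-rule strategy that the paper uses for the exact drift in Appendix~\ref{SFS-W2-appendix:g-satisfy-condition} (the paper states this proposition without an explicit proof): the three bounds you isolate, $\|N(x)-N(y)\|\le\beta L_g\|x-y\|$, $|D(x)-D(y)|\le L_g\|x-y\|$ and $\|N\|/D\le\beta L_g/\rho$, combine to give exactly the constant $L_f=\big(1+\tfrac{L_g}{\rho}\big)\tfrac{\beta L_g}{\rho}$. One caveat worth flagging: your argument relies on the first (gradient) representation in \eqref{SFS-W2-eq:f-M-experssion}; the second representation obtained via Stein's lemma coincides with it only in expectation, not pathwise for finite $M$, so the uniform Lipschitz constant you derive is for the gradient form of the estimator.
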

Now
we can present an enhanced non-asymptotic error bound for the SFS algorithm \eqref{SFS-W2-eq:Euler-scheme-mc}.

\begin{thm}
\label{SFS-W2-thm:main-rerult-mc}
    (Main result: enhanced error bounds with inexact drift)
    Let Assumptions \ref{SFS-W2-ass:absolutely-continuous-distribution}, \ref{SFS-W2-ass:g-nabla-g-lip} and \ref{SFS-W2-ass:f-in-C^2} hold.
    Let $M,N \in\mathbb{N}$ and let $\big\{\widetilde{Y}^M_{t_n}\big\}_{n \in [N]_0}$ be produced by the sampler \eqref{SFS-W2-eq:Euler-scheme-mc} with the uniform step-size $h=\tfrac{1}{N}
    $. Then there exists a constant $C$ independent of $d,h$ and $M$, such that,
    \begin{align*}
        \mathcal{W}_2
        \big(
        \mathcal{L}aw
        (
        \widetilde{Y}^M
        _{1}
        ), 
        \mu
        \big)
        \leq 
        { 
        C d h
        +
        C
        \sqrt{\tfrac{d}{M}},}
    \end{align*}
    where $M$ is the number of samples used in the Monte Carlo
    estimator \eqref{SFS-W2-eq:f-M-experssion}.
\end{thm}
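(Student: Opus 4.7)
The plan is to split the total error into a time-discretization contribution, which is already controlled by Theorem~\ref{SFS-W2-thm:main-rerult-no-mc}, and a Monte Carlo contribution obtained by coupling the two Euler recursions \eqref{SFS-W2-eq:Euler-scheme-mc} and \eqref{SFS-W2-eq:Euler-scheme-no-mc} through the same Brownian increments $\{\Delta W_n\}$. Since $\mathcal{W}_2$ is dominated by the $L^2$-norm under any coupling, the triangle inequality yields
\begin{equation*}
\mathcal{W}_2\bigl(\mathcal{L}aw(\widetilde{Y}^M_1),\mu\bigr)
\leq \bigl(\mathbb{E}\|\widetilde{Y}^M_1 - Y_1\|^2\bigr)^{1/2} + \mathcal{W}_2\bigl(\mathcal{L}aw(Y_1),\mu\bigr)
\leq \bigl(\mathbb{E}\|\widetilde{Y}^M_1 - Y_1\|^2\bigr)^{1/2} + Cdh,
\end{equation*}
so the task reduces to proving $\mathbb{E}\|\widetilde{Y}^M_1 - Y_1\|^2 \leq Cd/M$.

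The first ingredient I would establish is a uniform-in-$(y,t)$ Monte Carlo variance estimate
\begin{equation*}
\sup_{y\in\mathbb{R}^d,\,n\in[N-1]_0}\mathbb{E}_\xi\bigl\|\widetilde{f}^M_\beta(y,t_n) - f_\beta(y,t_n)\bigr\|^2 \leq \frac{Cd}{M}.
\end{equation*}
To sidestep the $1/\sqrt{1-t}$ singularity in the Stein-type representation, I would work with the non-singular ratio form $f_\beta(x,t) = \beta\,\mathbb{E}_\xi[\nabla g_\beta(\cdot)]/\mathbb{E}_\xi[g_\beta(\cdot)]$ and its empirical counterparts $\hat A_M$, $\hat B_M$. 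The decomposition $\hat A_M/\hat B_M - A/B = (\hat A_M - A)/\hat B_M + A(B - \hat B_M)/(B\hat B_M)$, combined with the uniform lower bound $\hat B_M, B \geq \rho$ from \eqref{SFS-W2-eq:g-lower-bounded}, reduces matters to standard i.i.d.\ variance estimates for $\hat A_M$ and $\hat B_M$; the linear growth of $\nabla g_\beta$ (a consequence of its Lipschitzness in Assumption~\ref{SFS-W2-ass:g-nabla-g-lip}) supplies the dimensional factor $d$ through $\mathbb{E}_\xi\|\nabla g_\beta(y+\sqrt{(1-t)\beta}\,\xi)\|^2 \leq C(1+d)$.

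Setting $e_n := \widetilde{Y}^M_{t_n} - Y_{t_n}$, subtracting the two recursions cancels the Brownian increments and leaves
\begin{equation*}
e_{n+1} = e_n + h\bigl[\widetilde{f}^M_\beta(\widetilde{Y}^M_{t_n},t_n) - \widetilde{f}^M_\beta(Y_{t_n},t_n)\bigr] + h\bigl[\widetilde{f}^M_\beta(Y_{t_n},t_n) - f_\beta(Y_{t_n},t_n)\bigr].
\end{equation*}
The first bracket is controlled in norm by $L_f\|e_n\|$ via the uniform Lipschitz estimate \eqref{sfs-w2-eq:til-f-lip}. For the second bracket I would exploit that $Y_{t_n}$ is $\mathcal{F}^W$-measurable while $\{\xi_j\}$ is independent of $\mathcal{F}^W$; conditioning on $\mathcal{F}^W$ and applying the variance estimate above at the random argument $y = Y_{t_n}$ yields $\mathbb{E}\|\widetilde{f}^M_\beta(Y_{t_n},t_n) - f_\beta(Y_{t_n},t_n)\|^2 \leq Cd/M$. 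Expanding $\|e_{n+1}\|^2$ and bounding the cross-term by Young's inequality delivers the one-step recursion $\mathbb{E}\|e_{n+1}\|^2 \leq (1+Ch)\,\mathbb{E}\|e_n\|^2 + Ch\cdot d/M$, from which a discrete Gronwall inequality gives $\mathbb{E}\|e_N\|^2 \leq Cd/M$.

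The main obstacle is conceptual rather than technical, and is intrinsic to the design of the new sampler: the $M$ Gaussian draws $\{\xi_j\}$ are generated once and reused at every iteration, so the Monte Carlo fluctuations $\widetilde{f}^M_\beta(\cdot,t_n) - f_\beta(\cdot,t_n)$ are strongly correlated across $n$ and possess no martingale-difference structure to exploit. The remedy is to route everything through second-moment bounds at each step and to invoke independence of $\{\xi_j\}$ only from the \emph{exact} trajectory $\{Y_{t_n}\}$ (never from $\{\widetilde{Y}^M_{t_n}\}$, which genuinely depends on $\{\xi_j\}$); this is precisely what makes the coupling-plus-Gronwall argument robust to the sample reuse and produces the advertised $C\sqrt{d/M}$ contribution.
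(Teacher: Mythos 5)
Your proposal is correct, and it reaches the stated bound by a slightly different decomposition than the paper. You compare the inexact-drift scheme $\widetilde{Y}^M$ to the exact-drift scheme $Y$ via a discrete-to-discrete coupling (same Brownian increments), bound $\mathbb{E}\|\widetilde{Y}^M_1-Y_1\|^2\leq Cd/M$ by a one-step recursion plus Gronwall, and then invoke Theorem \ref{SFS-W2-thm:main-rerult-no-mc} as a black box through the triangle inequality for $\mathcal{W}_2$. The paper instead compares $\widetilde{Y}^M$ directly to the continuous solution $X$ using the unified error-propagation estimate of Proposition \ref{SFS-W2-prop:Upper-mean-square error-bounds}, splitting the accumulated residual $\sum_i\mathcal{R}_i$ into a time-discretization part $J_1$ (recycled from the proof of Theorem \ref{SFS-W2-thm:main-rerult-no-mc}) and a Monte Carlo part $J_2$, and runs a single Gronwall pass. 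The two arguments share the essential ingredients: the variance bound $\mathbb{E}_\xi\|f_\beta(x,t_n)-\widetilde{f}^M_\beta(x,t_n)\|^2\leq Cd/M$ (the paper's Lemma \ref{SFS-W2-lem:jiao-M-error}), the uniform Lipschitz property \eqref{sfs-w2-eq:til-f-lip} of $\widetilde{f}^M_\beta$, and — crucially — the conditioning device that applies the variance bound only at an argument ($Y_{t_n}$ for you, $X_{t_i}$ for the paper) that is $\mathcal{F}^W$-measurable and hence independent of the reused samples $\{\xi_j\}$. You correctly identify this last point as the place where the sample-reuse design could bite and where independence must not be invoked against $\widetilde{Y}^M$ itself. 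Your route buys modularity (Theorem \ref{SFS-W2-thm:main-rerult-no-mc} is used verbatim and the MC analysis is isolated in a clean discrete coupling); the paper's route avoids introducing the auxiliary process $Y$ and its moment bounds. Both yield $Cdh+C\sqrt{d/M}$.
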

Its proof is deferred to Section \ref{SFS-W2-section:Proof of Theorem}.
For the sampler \eqref{SFS-W2-eq:Euler-scheme-mc}, we obtain an enhanced convergence rate of order { $\mathcal{O}(h)$,} significantly improving the convergence rate of order $\mathcal{O}(\sqrt{h})$ in \cite[Theorem III.2]{MRjiao}.
Moreover, the error analysis here does not rely on a strong convexity condition, as required by \cite{MRjiao} (see Condition (C4) in \cite{MRjiao}).
As a direct consequence, we obtain the following result complexity concerning the required number of evaluations of $g_{\beta}$, the number of Brownian increments and the number of samples used in the Monte Carlo estimator, whose proof is postponed to Appendix \ref{SFS-W2-appendix:mixing-time}.

\begin{prop}
\label{SFS-W2-prop:mixing-time}
    Let Assumptions \ref{SFS-W2-ass:absolutely-continuous-distribution}, \ref{SFS-W2-ass:g-nabla-g-lip} and \ref{SFS-W2-ass:f-in-C^2} hold. 
    To achieve a given precision level $\epsilon>0$ in the $L^2$-Wasserstein distance, a required number of {  Brownian increments} of the SFS algorithm \eqref{SFS-W2-eq:Euler-scheme-mc} is of order { $\mathcal{O}\left(\frac{d}{\epsilon}\right)$,
    the number of evaluations of $g_{\beta}$ is of order $\mathcal{O}\left(\frac{d^2}{\epsilon^3}\right)$} and the number of {samples of $\xi$} used in the Monte Carlo estimator \eqref{SFS-W2-eq:f-M-experssion} is of order
    { $\mathcal{O}
    \left(\frac{d}{\epsilon^2}\right)$}.
\end{prop}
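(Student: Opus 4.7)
The plan is to read off the complexity bounds directly from the error estimate supplied by Theorem \ref{SFS-W2-thm:main-rerult-mc}, namely
\[
\mathcal{W}_2\big(\mathcal{L}aw(\widetilde{Y}^M_{1}),\mu\big) \leq C d h + C\sqrt{d/M}.
\]
The idea is to allocate the tolerance $\epsilon$ evenly between the time-discretization term $Cdh$ and the Monte Carlo term $C\sqrt{d/M}$, and then simply invert each inequality for $N=1/h$ and $M$ respectively. No additional probabilistic machinery is needed; the whole proof is a bookkeeping exercise built on top of Theorem \ref{SFS-W2-thm:main-rerult-mc}.

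First I would impose $Cdh \leq \epsilon/2$. Solving gives $h \leq \epsilon/(2Cd)$, which means it suffices to take $N = \lceil 2Cd/\epsilon\rceil$, yielding $N = \mathcal{O}(d/\epsilon)$ time steps. Since the Euler scheme \eqref{SFS-W2-eq:Euler-scheme-mc} uses exactly one Brownian increment $\Delta W_n$ per step, the total number of Brownian increments required is $\mathcal{O}(d/\epsilon)$, matching the advertised rate.

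Next I would impose $C\sqrt{d/M}\leq \epsilon/2$. Solving for $M$ yields $M\geq 4C^2 d/\epsilon^2$, so taking $M=\mathcal{O}(d/\epsilon^2)$ samples in the Monte Carlo estimator \eqref{SFS-W2-eq:f-M-experssion} is sufficient. At this point I would also emphasise the key structural observation already highlighted in the paper after \eqref{SFS-W2-eq:Euler-scheme-mc}: the family $\{\xi_j\}_{j=1}^M$ is generated \emph{once} and reused across every iteration, so this value of $M$ does not need to grow with $N$.

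Finally, the total number of evaluations of $g_\beta$ is the product of the per-iteration cost (namely $M$ evaluations of $g_\beta$ inside $\widetilde{f}^M_\beta(\cdot,t_n)$, since by Stein's lemma both the numerator and denominator in \eqref{SFS-W2-eq:f-M-experssion} only require evaluations of $g_\beta$ at the same $M$ nodes) and the number of iterations $N$, giving $NM = \mathcal{O}(d/\epsilon)\cdot \mathcal{O}(d/\epsilon^2) = \mathcal{O}(d^2/\epsilon^3)$. I do not anticipate any technical obstacle here; the only subtlety is matching the advertised orders exactly after the even split of $\epsilon$, and being careful to count $g_\beta$-evaluations rather than pairs $(g_\beta,\nabla g_\beta)$, which is justified because the Stein-lemma rewriting \eqref{SFS-W2-eq:f-M-experssion} eliminates the need to evaluate $\nabla g_\beta$.
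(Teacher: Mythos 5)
Your proposal is correct and follows essentially the same route as the paper's proof in Appendix B: split the tolerance evenly between the two error terms of Theorem \ref{SFS-W2-thm:main-rerult-mc} and invert for $N$ and $M$. Your explicit count of the $g_{\beta}$-evaluations as $NM=\mathcal{O}(d^2/\epsilon^3)$ (using that Stein's rewriting in \eqref{SFS-W2-eq:f-M-experssion} avoids evaluating $\nabla g_{\beta}$) is a detail the paper's proof leaves implicit, and is a welcome addition.
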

According to \cite[Theorem III.2]{MRjiao}, a required number of iterations of the SFS algorithm \eqref{SFS-W2-eq:Euler-scheme-mc} is of order $\mathcal{O}\left(\frac{d}{\epsilon^2}\right)$, and the number of samples used in the Monte Carlo estimator \eqref{SFS-W2-eq:f-M-experssion} is of order
    $\mathcal{O}
    \left(\frac{d^2}{\epsilon^4}\right)$, in order to attain a given precision level $\epsilon>0$ in the  $L^2$-Wasserstein distance.
{We mention that in the SFS with an inexact drift proposed by \cite{MRjiao}, $M$ independent Gaussian random variables are updated at each time step. Consequently, the cost in the Monte Carlo estimator is of order {$\mathcal{O}(h^{-1}Md)$, where $d$ appears here as the Gaussians are $d$-dimensional.}
By contrast, $M$ independent Gaussian random variables are only generated once and repeatedly used at each iteration for the newly proposed samplers \eqref{SFS-W2-eq:Euler-scheme-mc} with an inexact drift in our paper.
Thus, the cost of Monte Carlo in our work is reduced to an order of {$\mathcal{O}(Md)$}.}

In Table \ref{tab:comparison}, we compare our results with those in \cite{MRjiao}, in terms of the error bounds, the number of Brownian increments, the number of evaluations of $g_{\beta}$ and the number of samples used in Monte Carlo estimator of
the SFS \eqref{SFS-W2-eq:Euler-scheme-mc} required to achieve a given precision level $\epsilon>0$ in the $L^2$-Wasserstein distance.
Evidently, our results significantly improve those in \cite[Theorem III.2]{MRjiao}.

\begin{table}[h]
\caption{A comparison of non-asymptotic error bounds in the $L^2$-Wasserstein distance for SFS \eqref{SFS-W2-eq:Euler-scheme-mc}.}
\vspace{-2pt} 
\centering
\renewcommand{\arraystretch}{1.5} 
\footnotesize 
\begin{tabular}{ccccccc}
\Xhline{1pt} 
 & \makecell[c]{Strong\\ convexity} 
 & \makecell[c]{Additional\\ condition\textsuperscript{1}}
 & \makecell[c]{Error\\ bound} 
 & \makecell[c]{{ Number of} \\{  Brownian }\\{ increments}}
 & \makecell[c]{{ Number of}
 \\{ MC samples}}
 & \makecell[c]{{ Number of} \\ { evaluations of  $g_{\beta}$} }\\ \hline
\cite{MRjiao} 
& Yes
& No 
& $\mathcal{O}(\sqrt{dh}+\sqrt{d/M})$
& { 
$\mathcal{O}(d/\epsilon^{2})$}
& { $\mathcal{O}(d^2/\epsilon^{4})$ }
& { $\mathcal{O}(d^2/\epsilon^{4})$ }
\\ 
This work 
& No 
& Yes 
& { $\mathcal{O}(dh+\sqrt{d/M})$ }
& { $\mathcal{O}(d/\epsilon)$}
& { $\mathcal{O}(d/\epsilon^{2})$}
& { $\mathcal{O}(d^2/\epsilon^{3})$}\\ 
\Xhline{1pt} 
\end{tabular}
\label{tab:comparison}
\caption*{\textsuperscript{1}
Smoothness assumptions other
than the Lipschitz condition for the drift.}
\end{table}

In the following, we also present a comparison between our samplers and LMC in Table \ref{tab:comparison-lmc}, in terms of the smoothness assumptions, the number of Brownian increments,
the number of evaluations of functions and the number of samples used in Monte Carlo estimator required to attain a tolerance level $\epsilon>0$ in the $L^2$-Wasserstein distance.
%
\begin{table}[h]
\caption{A comparison of computational cost between SFS and LMC in the $L^2$-Wasserstein distance.}
\vspace{-2pt} 
\centering
\renewcommand{\arraystretch}{1.5} 
\small
\begin{tabular}{ccccc}
\Xhline{1pt} 
& \makecell[c]{Convexity\\ condition}
& \makecell[c]{Number of \\ Brownian increments}
& \makecell[c]{Number of \\ MC samples}
& \makecell[c]{Number of \\ evaluations of functions}
\\ \hline
\cite{DA2017}
& SC
& $\mathcal{O}
\big(
\tfrac{d}
{\epsilon^2}
\log(\tfrac{\sqrt{d}}{\epsilon})
\big)$
& No
& $\mathcal{O}
\big(
\tfrac{d}
{\epsilon^2}
\log(\tfrac{\sqrt{d}}{\epsilon})
\big)$
\\ 
\cite{mou2022improved}
& LSI
& $\mathcal{O}
\big(
\tfrac{d}
{\epsilon}
\log(\tfrac{\sqrt{d}}{\epsilon})
\big)$
& No
& $\mathcal{O}
\big(
\tfrac{d}
{\epsilon}
\log(\tfrac{\sqrt{d}}{\epsilon})
\big)$
\\ 
SFS \eqref{SFS-W2-eq:Euler-scheme-no-mc} in this work
& No 
& $\mathcal{O}(\tfrac{d}{\epsilon})$
& No
& $\mathcal{O}(\tfrac{d}{\epsilon})$
\\ 
SFS \eqref{SFS-W2-eq:Euler-scheme-mc} in this work
& No 
& $\mathcal{O}(\tfrac{d}{\epsilon})$
& $\mathcal{O}(\tfrac{d}{\epsilon^2})$
& $\mathcal{O}(\tfrac{d^2}{\epsilon^3})$
\\
\Xhline{1pt} 
\end{tabular}
\caption*{
\footnotesize
{ 
SC: strong convexity \quad
LSI: log-Sobolev inequality}}
\label{tab:comparison-lmc}
\end{table}
{  As shown in Table \ref{tab:comparison-lmc}, the LMC seems cheaper than SFS \eqref{SFS-W2-eq:Euler-scheme-mc} with inexact drift in terms of computational costs.  But when the target distribution exhibits complex structures, notably high dimensions and multiple modes, 
the sampled points from LMC are prone to becoming trapped in a single mode and struggle to escape from it (see subsection \ref{subsection:Gaussian-mixture} in details). 
Numerical results show that the SFS substantially outperforms vanilla Langevin samplers in multimodal sampling.}

\section{Proof of Theorem \ref{SFS-W2-thm:main-rerult-no-mc} and Theorem \ref{SFS-W2-thm:main-rerult-mc}}
\label{SFS-W2-section:Proof of Theorem}
In this section, we aim to prove Theorem \ref{SFS-W2-thm:main-rerult-no-mc} and Theorem \ref{SFS-W2-thm:main-rerult-mc}.
%
%
For convenience, we unify the two proposed samplers \eqref{SFS-W2-eq:Euler-scheme-no-mc} and \eqref{SFS-W2-eq:Euler-scheme-mc} in the following form
\begin{align}
\label{SFS-W2-eq:unify-e-m}
   \hat{Y}_{t_{n+1}}
   =
   \hat{Y}_{t_n}
   +
   h 
   \hat{f}_{\beta}
   \big(
   \hat{Y}_{t_n}, t_n
   \big)
   +
   \sqrt{\beta}
   \Delta W_{n}, 
   \quad n\in [N-1]_0,
   \quad \hat{Y}_0=0,
\end{align}
where 
$\Delta W_{n} 
:= 
W_{t_{n+1}}
-
W_{t_n}.$
{  
In the case when the drift can be computed exactly, we take $\hat{Y}=Y$ and $\hat{f}_{\beta}=f_{\beta}$, reducing \eqref{SFS-W2-eq:unify-e-m} to the SFS algorithm \eqref{SFS-W2-eq:Euler-scheme-no-mc}.
In the other case that $f_{\beta}$ lacks a closed form expression, we set $\hat{Y}=\widetilde{Y}^M$,\,$\hat{f}_{\beta}=\widetilde{f}^M_{\beta}$, 
and \eqref{SFS-W2-eq:unify-e-m} becomes the SFS algorithm \eqref{SFS-W2-eq:Euler-scheme-mc} with an inexact drift.}
Next we prove the following result, describing {   the error propagation} of the scheme \eqref{SFS-W2-eq:unify-e-m} for the Schr\"odinger-F\"ollmer diffusion \eqref{SFS-W2-eq:f-sde}.

\begin{prop}
\label{SFS-W2-prop:Upper-mean-square error-bounds}
     Let Assumptions \ref{SFS-W2-ass:absolutely-continuous-distribution} and \ref{SFS-W2-ass:g-nabla-g-lip} hold.  Let $\left\{X_t\right\}_{t \in [0,1]}$ and $\big\{\hat{Y}_{t_n}\big\}_{n \in [N]_0}$ be the solutions of SDE \eqref{SFS-W2-eq:f-sde} and the SFS algorithm \eqref{SFS-W2-eq:unify-e-m}, respectively.
     { 
     Then
\begin{align*}
\mathbb{E}
\Big[
\big\|
X_{t_n}
-
\hat{Y}_{t_n}
\big\|^2
\Big] 
\leq
2 L_f^2 h
\sum_{i=1}^{n-1}
\mathbb{E}
\Big[
\big\|
X_{t_i}
-
\hat{Y}_{t_i}
\big\|^2
\Big]
+
2 \,
\mathbb{E}
\bigg[
\Big\|
\sum_{i=1}^{n}
\mathcal{R}_i
\Big\|^2
\bigg],
\end{align*}}
where, for $i \in [n-1], n \in [N]$, we denote
\begin{align}
\label{SFS-W2-eq:defination-R_n}
         \mathcal{R}_i:
         = 
         \int_{t_{i-1}}^{t_i}
         \big[
         f_{\beta}(X_s,s)
         -
         \hat{f}_{\beta}(X_{t_{i-1}},t_{i-1})
         \big] 
         \mathrm{\,d} s.
\end{align}
\end{prop}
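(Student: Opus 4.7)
The plan is to unify the analysis by exploiting the shared structure of the two samplers written in the form \eqref{SFS-W2-eq:unify-e-m} and to derive the one-step error decomposition in integral form. I would begin by writing both processes from $0$ to $t_n$ in their integral representations, namely
\begin{align*}
X_{t_n} = \int_0^{t_n} f_{\beta}(X_s,s)\,\mathrm{d}s + \sqrt{\beta}\,W_{t_n},
\qquad
\hat{Y}_{t_n} = h\sum_{i=0}^{n-1}\hat{f}_{\beta}(\hat{Y}_{t_i},t_i) + \sqrt{\beta}\,W_{t_n},
\end{align*}
so that the stochastic integrals cancel and I obtain
\begin{align*}
X_{t_n} - \hat{Y}_{t_n}
= \sum_{i=0}^{n-1}\int_{t_i}^{t_{i+1}}
\bigl[f_{\beta}(X_s,s) - \hat{f}_{\beta}(\hat{Y}_{t_i},t_i)\bigr]\,\mathrm{d}s.
\end{align*}

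Next I would insert $\pm\hat{f}_{\beta}(X_{t_i},t_i)$ inside the integrand to split the error into a consistency part and a stability part:
\begin{align*}
X_{t_n} - \hat{Y}_{t_n}
= \sum_{i=1}^{n}\mathcal{R}_i
+ h\sum_{i=0}^{n-1}\bigl[\hat{f}_{\beta}(X_{t_i},t_i) - \hat{f}_{\beta}(\hat{Y}_{t_i},t_i)\bigr],
\end{align*}
where the first sum, after an index shift, matches the definition \eqref{SFS-W2-eq:defination-R_n} of $\mathcal{R}_i$. Then I apply $\|a+b\|^2\le 2\|a\|^2+2\|b\|^2$ and bound the stability part via Cauchy--Schwarz,
\begin{align*}
h^2\Bigl\|\smallsum_{i=0}^{n-1}[\hat{f}_{\beta}(X_{t_i},t_i)-\hat{f}_{\beta}(\hat{Y}_{t_i},t_i)]\Bigr\|^2
\le h^2 n\sum_{i=0}^{n-1}\|\hat{f}_{\beta}(X_{t_i},t_i)-\hat{f}_{\beta}(\hat{Y}_{t_i},t_i)\|^2,
\end{align*}
together with the uniform Lipschitz bound on $\hat{f}_{\beta}$ (Proposition \ref{SFS-W2-prop-unique-strong-solution} when $\hat f_\beta=f_\beta$, and \eqref{sfs-w2-eq:til-f-lip} when $\hat f_\beta=\widetilde f^M_\beta$), using that $nh\le 1$.

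Taking expectation and observing that $X_0=\hat{Y}_0=0$ annihilates the $i=0$ summand, I end up with
\begin{align*}
\mathbb{E}\bigl[\|X_{t_n}-\hat{Y}_{t_n}\|^2\bigr]
\le 2L_f^2 h\sum_{i=1}^{n-1}\mathbb{E}\bigl[\|X_{t_i}-\hat{Y}_{t_i}\|^2\bigr]
+ 2\,\mathbb{E}\Bigl[\Bigl\|\smallsum_{i=1}^{n}\mathcal{R}_i\Bigr\|^2\Bigr],
\end{align*}
which is the claimed bound. This proposition is essentially the preparatory step that isolates the local truncation terms $\mathcal{R}_i$ from a Gronwall-type stability iteration; the hard work is deferred to the sharp estimate of $\mathbb{E}\|\sum_i\mathcal{R}_i\|^2$, where one must handle both the $1/2$-Hölder behavior of $s\mapsto f_\beta(X_s,s)$ and the $\partial_t f_\beta$ singularity at $t=1$ from \eqref{SFS-W2-eq:f-t-1-2}. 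For the present proposition, however, the only subtleties are the careful addition/subtraction producing $\mathcal{R}_i$ and the crude but dimension-free Cauchy--Schwarz trick $h^2 n\le h$; no further difficulty is expected.
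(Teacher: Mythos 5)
Your proposal is correct and follows essentially the same route as the paper: the paper derives the identical decomposition $X_{t_n}-\hat{Y}_{t_n}=h\sum_{i=0}^{n-1}\big(\hat{f}_{\beta}(X_{t_i},t_i)-\hat{f}_{\beta}(\hat{Y}_{t_i},t_i)\big)+\sum_{i=1}^{n}\mathcal{R}_i$ (by iterating the one-step recursion rather than writing the integral forms directly, which is an immaterial difference), and then applies the same $\|a+b\|^2\le 2\|a\|^2+2\|b\|^2$, Cauchy--Schwarz with $nh\le 1$, the Lipschitz bounds \eqref{SFS-W2-eq:f-lipschitz-condition} and \eqref{sfs-w2-eq:til-f-lip}, and $X_0=\hat{Y}_0=0$. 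No gaps.
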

\begin{proof}
\textbf{Proof.}
By \eqref{SFS-W2-eq:f-sde}, one can write
\begin{align*}
    X_{t_n}
    =
    X_{t_{n-1}}
    +
    \hat{f}_{\beta}(X_{t_{n-1}},t_{n-1}) h
    + 
    \sqrt{\beta}
    \Delta W_{n-1}
    +
    \mathcal{R}_n,
\end{align*}
where $\mathcal{R}_n$ is defined by \eqref{SFS-W2-eq:defination-R_n}.
{ 
Subtracting \eqref{SFS-W2-eq:unify-e-m} from this yields
\begin{align}
\label{SFS-W2-eq:x_n-y_n-1}
        X_{t_n}
        -
        \hat{Y}_{t_n}
        & =  
        X_{t_{n-1}}
        -
        \hat{Y}_{t_{n-1}}
        + 
        h
        \big(
        \hat{f}_{\beta}
        (X_{t_{n-1}},t_{n-1})
        -
        \hat{f}_{\beta}
        (\hat{Y}_{t_{n-1}},t_{n-1})
        \big)
        +
        \mathcal{R}_n
        \nonumber\\
        & =
        h
        \sum_{i=0}^{n-1}
        \big(
        \hat{f}_{\beta}
        (X_{t_{i}},t_{i})
        -
        \hat{f}_{\beta}
        (\hat{Y}_{t_{i}},
        t_{i})
        \big)
        +
        \sum_{i=1}^{n}
        \mathcal{R}_i,
\end{align}
where the fact was also used that 
$X_{0}
=
\hat{Y}_{0}=0$.
Squaring both sides of \eqref{SFS-W2-eq:x_n-y_n-1}, taking expectations and using the Lipschitz conditions \eqref{SFS-W2-eq:f-lipschitz-condition} and \eqref{sfs-w2-eq:til-f-lip},
we obtain
\begin{align}
\mathbb{E}
\Big[
\big\|
X_{t_n}
-
\hat{Y}_{t_n}
\big\|^2
\Big]
& =
\mathbb{E}
\Big[
\Big\|
h
\sum_{i=0}^{n-1}
\big(
\hat{f}_{\beta}
(X_{t_{i}},t_{i})
-
\hat{f}_{\beta}
(\hat{Y}_{t_{i}},t_{i})
\big)
+
\sum_{i=1}^{n}
\mathcal{R}_i
\Big\|^2
\Big]
\nonumber
\\
& 
\leq
2 h^2
\,
\mathbb{E}
\Big[
\Big\|
\sum_{i=0}^{n-1}
\big(
\hat{f}_{\beta}
(X_{t_{i}},t_{i})
-
\hat{f}_{\beta}
(\hat{Y}_{t_{i}},t_{i})
\big)
\Big\|^2
\Big]
+
2 \,
\mathbb{E}
\bigg[
\Big\|
\sum_{i=1}^{n}
\mathcal{R}_i
\Big\|^2
\bigg]
\nonumber\\
& \leq
2 L_f^2 h
\sum_{i=1}^{n-1}
\mathbb{E}
\Big[
\big\|
X_{t_i}
-
\hat{Y}_{t_i}
\big\|^2
\Big]
+
2 \,
\mathbb{E}
\bigg[
\Big\|
\sum_{i=1}^{n}
\mathcal{R}_i
\Big\|^2
\bigg].
\end{align}
The proof is completed.}
\end{proof}

\subsection{Proof of Theorem \ref{SFS-W2-thm:main-rerult-no-mc}}
\label{sfs-w2-subsec:proof-thm}
%
\begin{proof}
Bearing Proposition \ref{SFS-W2-prop:Upper-mean-square error-bounds} in mind, one just needs to properly handle the error term
$
\mathbb{E}
\Big[
\big\|
\sum_{i=1}^n
\mathcal{R}_i
\big\|^2
\Big],
$ 
where  
we have
$
\hat{Y}=Y \text{ and } \hat{f}_{\beta}=f_{\beta}
$
for the sampler \eqref{SFS-W2-eq:Euler-scheme-no-mc}.
By means of the It\^o formula \cite[Theorem 4.2.1]{oksendal2013stochastic} we infer, {for 
$ s \in [t_{i-1}, t_i)$,}
\begin{equation}
\begin{aligned}
\label{SFS-W2-eq:f-Taylor-expansion-1}
        f_{\beta}(X_s,s)
        -
        f_{\beta}(X_{t_{i-1}},t_{i-1})
        & =
        \int^s_{t_{i-1}}
        \partial_r 
        f_{\beta}(X_r,r)
        \mathrm{\,d} r 
        +
        \int^s_{t_{i-1}} 
        D f_{\beta}(X_r,r) 
        f_{\beta}(X_r,r) \mathrm{\,d} r
        \\
        & \quad +
        \sqrt{\beta}
        \int^s_{t_{i-1}}   
        D f_{\beta}(X_r,r) 
        \mathrm{\,d} W_r
        +
        \frac{\beta}{2} 
        \sum_{j=1}^d
        \int^s_{t_{i-1}}
        D^2 f_{\beta}(X_r,r)
        (e_j,e_j)
        \mathrm{\,d} r,
\end{aligned}
\end{equation}
where $\{e_j\}_{j \in\{1, \cdots, d\}}$ is denoted as the orthonormal basis of $\mathbb{R}^d$.
Therefore,
\begin{align}
\mathbb{E}
\bigg[
\Big\|
\sum_{i=1}^{n}
\mathcal{R}_i
\Big\|^2
\bigg]
& =
\mathbb{E}
\bigg[
\Big\|
\sum_{i=1}^n
\int_{t_{i-1}}^{t_i}
\big[
f_{\beta}(X_s,s)
-
f_{\beta}(X_{t_{i-1}},t_{i-1})
\big] 
\mathrm{\,d} s
\Big\|^2
\bigg]
\nonumber\\
& \leq
\underbrace{
4
\,
\mathbb{E}
\bigg[
\Big\|
\sum_{i=1}^n
\int_{t_{i-1}}^{{t_{i}}}
\int_{t_{i-1}}^{s}
\partial_r 
f_{\beta}(X_r,r)
\mathrm{\,d} r 
\mathrm{\,d} s
\Big\|^2
\bigg]}_{=:T_1}
\nonumber\\
& \quad +
\underbrace{
4 \,
\mathbb{E}
\bigg[
\Big\|
\sum_{i=1}^n
\int_{t_{i-1}}^{{t_{i}}}
\int_{t_{i-1}}^{s}
D f_{\beta}(X_r,r) f_{\beta}(X_r,r) 
\mathrm{\,d} r 
\mathrm{\,d} s
\Big\|^2
\bigg]}_{=:T_2}
\nonumber\\
& \quad +
\underbrace{
4 \beta \,
\mathbb{E}
\bigg[
\Big\|
\sum_{i=1}^n
\int_{t_{i-1}}^{{t_{i}}}
\int_{t_{i-1}}^{s}
D f_{\beta}(X_r,r) 
\mathrm{\,d} W_r
\mathrm{\,d} s
\Big\|^2
\bigg]}_{=:T_3}
\nonumber\\
& \quad +
\underbrace{
 \beta^2 \,
\mathbb{E}
\bigg[
\Big\|
\sum_{i=1}^n
\sum_{j=1}^d
\int_{t_{i-1}}^{{t_{i}}}
\int_{t_{i-1}}^{s}
D^2 f_{\beta}(X_r,r)
(e_j,e_j)
\mathrm{\,d} r
\mathrm{\,d} s
\Big\|^2
\bigg]}_{=:T_4}.
\end{align}
For 
$T_{1}$, we use the Minkowski inequality, the H\"older inequality and Assumption \ref{SFS-W2-ass:f-in-C^2} to deduce
\begin{equation}
\label{sfs-w2-eq:est-j11}
\begin{aligned}
T_{1}
& =
4 \,
\Big\|
\sum_{i=1}^n
\int_{t_{i-1}}^{{t_{i}}}
\int_{t_{i-1}}^{s}
\partial_r 
f_{\beta}
(X_r,r)
\mathrm{\,d} r 
\mathrm{\,d} s
\Big\|^2_{L^2(\Omega;\mathbb{R}^d)} \\
& \leq
4
\bigg(
\sum_{i=1}^n
\int_{t_{i-1}}^{{t_{i}}}
\int_{t_{i-1}}^{s}
\big\|
\partial_r 
f_{\beta}
(X_r,r)
\big\|_{L^2(\Omega;\mathbb{R}^d)}
\mathrm{\,d} r 
\mathrm{\,d} s
\bigg)^2\\
& \leq
4
\bigg(
\widetilde{L}_f
\,
d^\frac{1}{2}
\sum_{i=1}^n
\int_{t_{i-1}}^{{t_{i}}}
\int_{t_{i-1}}^{s}
\frac{1}
{\sqrt{1-r}}
\mathrm{\,d} r 
\mathrm{\,d} s
\bigg)^2\\
& \leq
4
\bigg(
\widetilde{L}_f
\,
d^\frac{1}{2}
h
\sum_{i=1}^n
\int_{t_{i-1}}^{{t_{i}}}
\frac{1}
{\sqrt{1-s}} 
\mathrm{\,d} s
\bigg)^2\\
& =
16
\widetilde{L}^2_f\,
dh^2,
\end{aligned}
\end{equation}
where we noted
\begin{align*}
    \sum_{i=1}^{n}
    \int_{t_{i-1}}^{t_i}
    \frac{1}
    {\sqrt{1-s}} 
    \mathrm{\,d} s
    =
    \int_{0}^{1}
    \frac{1}
    {\sqrt{1-s}} 
    \mathrm{\,d} s
    =
    \int_0^1
    \frac{1}{\sqrt{t}}
    \mathrm{\,d} t
    =
    2.
\end{align*}
With regard to $T_2$, by using the inequality $(\sum_{i=1}^n u_i )^2 \leq n \sum_{i=1}^n u_i^2$, the H\"older inequality, Assumption \ref{SFS-W2-ass:f-in-C^2} and Lemma \ref{SFS-W2-lem:x_t-moments-bounded}, we obtain
\begin{equation}
\label{sfs-w2-eq:est-j12}
\begin{aligned}
T_2
& \leq
4
nh^2 
\sum_{i=1}^n
\mathbb{E}
\bigg[
\int_{t_{i-1}}^{{t_{i}}}
\int_{t_{i-1}}^{s}
\big\|
D f_{\beta}(X_r,r) f_{\beta}(X_r,r)
\big\|^2
\mathrm{\,d} r 
\mathrm{\,d} s
\bigg]\\
& \leq
4 h^2
\sup _{r\in[0, 1]} 
\mathbb{E}
\big[
\|
Df_{\beta}(X_r,r) 
f_{\beta}(X_r,r)\|^2
\big]
\\
& \leq 
4 L_f^2 h^2 
\left\|
f_{\beta}(X_r,r)
\right\|^2_{L^2(\Omega ; \mathbb{R}^d)}
\\
& \leq
8 L_f^2 h^2
\Big(
\hat{L}_f^2  
+
L_f^2
\sup_{r \in[0, 1]}
\left\|
X_r
\right\|^2_{L^2(\Omega ; \mathbb{R}^d)}\Big)
\\
& \leq
8 L_f^2
\big(
\hat{L}_f^2  
+
L_f^2 M_1
\big) 
d h^2.
\end{aligned}
\end{equation}
Next we cope with $T_3$.
First, we have
\begin{equation}
\begin{aligned}
T_3
& =
\underbrace{
4 \beta
\sum_{i=1}^n
\mathbb{E}
\bigg[
\Big\|
\int_{t_{i-1}}^{{t_{i}}}
\int_{t_{i-1}}^{s}
D f_{\beta}(X_r,r) 
\mathrm{\,d} W_r
\mathrm{\,d} s
\Big\|^2
\bigg]}_{=:T_{3,1}}
\nonumber\\
& \quad +
8\beta
\sum_{1 \leq i < j \leq n}
\underbrace{
\E
\bigg[
\Big \langle
\int_{t_{i-1}}^{{t_{i}}}
\int_{t_{i-1}}^{s}
D f_{\beta}(X_r,r) 
\mathrm{\,d} W_r 
\mathrm{\,d} s,
\int_{t_{j-1}}^{{t_{j}}}
\int_{t_{j-1}}^{s}
D f_{\beta}(X_r,r) 
\mathrm{\,d} W_r 
\mathrm{\,d} s
\Big \rangle
\bigg]}_{=:T_{3,2}}.
\end{aligned}
\end{equation}
For any $1\leq i < j \leq n$, we show that $T_{3,2}$ vanishes: 
\begin{equation}
\begin{aligned}
T_{3,2}
& =
\E
\bigg[
\E
\bigg[
\Big \langle
\int_{t_{i-1}}^{{t_{i}}}
\int_{t_{i-1}}^{s}
D f_{\beta}(X_r,r) 
\mathrm{\,d} W_r 
\mathrm{\,d} s,
\int_{t_{j-1}}^{{t_{j}}}
\int_{t_{j-1}}^{s}
D f_{\beta}(X_r,r) 
\mathrm{\,d} W_r 
\mathrm{\,d} s
\Big \rangle
\bigg|
\mathcal{F}^W_{t_{j-1}}
\bigg]
\bigg]\\
& =
\E
\bigg[
\Big \langle
\int_{t_{i-1}}^{{t_{i}}}
\int_{t_{i-1}}^{s}
D f_{\beta}(X_r,r) 
\mathrm{\,d} W_r 
\mathrm{\,d} s,
\int_{t_{j-1}}^{{t_{j}}}
\E
\bigg[
\int_{t_{j-1}}^{s}
D f_{\beta}(X_r,r) 
\mathrm{\,d} W_r
\Big|
\mathcal{F}^W_{t_{j-1}}
\bigg]
\mathrm{\,d} s
\Big \rangle
\bigg]\\
& =0,
\end{aligned}
\end{equation}
where we used the fact that the first integral is $\mathcal{F}^W_{t_{j-1}}$-measurable and the basic property of the It\^o integral.
As a consequence, we further employ the H\"older inequality, the It\^o Isometry \cite[Lemma 5.4]{mao2007stochastic}, Assumption \ref{SFS-W2-ass:f-in-C^2} and Lemma \ref{SFS-W2-lem:x_t-moments-bounded} to show
\begin{equation}
\begin{aligned}
\label{sfs-w2-eq:est-j13}
T_3 
= 
T_{3,1}
&\leq
4 \beta h
\sum_{i=1}^n
\mathbb{E}
\bigg[
\int_{t_{i-1}}^{{t_{i}}}
\Big\|
\int_{t_{i-1}}^{s}
D f_{\beta}(X_r,r) 
\mathrm{\,d} W_r
\Big\|^2
\mathrm{\,d} s
\bigg]
 \\
& =
4 \beta h
\sum_{i=1}^n
\mathbb{E}
\bigg[
\int_{t_{i-1}}^{{t_{i}}}
\int_{t_{i-1}}^{s}
\|
D f_{\beta}(X_r,r) 
\|^2_{\mathrm{F}}
\mathrm{\,d} r
\mathrm{\,d} s
\bigg]
\\
& \leq
4 \beta L_f^2 
d h^2,
\end{aligned}
\end{equation}
{where the last inequality follows from the direct result of \eqref{SFS-W2-prop:Df-bound} that}
\begin{align}
    \|Df_{\beta}(x,t)\|_{\mathrm{F}}
    \leq 
    \sqrt{d}
    \,
    \|Df_{\beta}(x,t)\|
    =
    \sqrt{d}
    \,
    \big(
    \sup_{\|v_1\|=1}
    \|Df_{\beta}(x,t)v_1\|
    \big)
    \leq 
    \sqrt{d}
    \,L_f,
    \:
    \forall\, x \in \mathbb{R}^d, \, ~ t \in [0,1].
\end{align}
For the estimate of $T_4$, by the inequality $(\sum_{i=1}^n u_i )^2 \leq n \sum_{i=1}^n u_i^2$, the H\"older inequality, 
Assumption \ref{SFS-W2-ass:f-in-C^2} and Lemma \ref{SFS-W2-lem:x_t-moments-bounded}, one obtains
\begin{equation}
\label{sfs-w2-eq:est-j14}
    \begin{aligned}
        T_4
        & \leq
        \beta^2 nh^2 
        \sum_{i=1}^n
        \mathbb{E}
        \bigg[
        \int_{t_{i-1}}^{{t_{i}}}
        \int_{t_{i-1}}^{s}
        \Big\|
        \sum_{j=1}^d
        D^2 f_{\beta}(X_r,r) (e_j,e_j)
        \Big\|^2
        \mathrm{\,d} r 
        \mathrm{\,d} s
        \bigg]\\
        & \leq
        \beta^2
        h^2
        \sup_{r\in[0, 1]} 
        \mathbb{E}
         \bigg[
         \Big\|
         \sum_{j=1}^d
         D^2 f_{\beta}(X_r,r) (e_j,e_j)
         \Big\|^2
         \bigg]\\
         & \leq
        \beta^2
        d
        h^2
        \sup_{r\in[0, 1]}
        \Big(
        \sum_{j=1}^d
        \mathbb{E}
         \Big[
         \big\|
         D^2 f_{\beta}(X_r,r) (e_j,e_j)
         \big\|^2
         \Big]
         \Big)\\
         & \leq
         \beta^2
         L_f^{' 2}
         d^2 h^2.
    \end{aligned}
\end{equation}
Combining \eqref{sfs-w2-eq:est-j11}, \eqref{sfs-w2-eq:est-j12}, \eqref{sfs-w2-eq:est-j13} with \eqref{sfs-w2-eq:est-j14} yields
\begin{align*}
\mathbb{E}
\bigg[
\Big\|
\sum_{i=1}^{n}
\mathcal{R}_i
\Big\|^2
\bigg]
\leq
C_1 d^2 h^2,
\end{align*}
where $C_1
: =
\max
\big\{
16\widetilde{L}^2_f,
8 L_f^2
\big(
\hat{L}_f^2  
+
L_f^2 M_1
\big),
4 \beta L^2_f,
\beta^2 L_f^{' 2}
\big\}$.
By virtue of Proposition \ref{SFS-W2-prop:Upper-mean-square error-bounds} and the discrete Gronwall inequality \cite[Lemma 2.3]{kruse2019randomized}, we get
\begin{align}
\mathbb{E}
\Big[
\big\|
X_{1}
-
{Y}_{1}
\big\|^2
\Big]
& \leq
2
C_1 d^2 h^2
\exp 
\Big(
\sum_{i=1}^N
2 L_f^2 h
\Big)
= 
2
C_1
\exp 
\big(
2 L_f^2
\big)
d^2 h^2.
\end{align}
The proof of Theorem \ref{SFS-W2-thm:main-rerult-no-mc} is completed.
\end{proof}

\subsection{Proof of Theorem \ref{SFS-W2-thm:main-rerult-mc}}
%
%
Before proving Theorem \ref{SFS-W2-thm:main-rerult-mc}, we present the following lemma.
\begin{lem}
\label{SFS-W2-lem:jiao-M-error}
    Let Assumption \ref{SFS-W2-ass:g-nabla-g-lip} hold. Then for any $x\in \mathbb{R}^d$ and $n \in [N-1]_0$,
    \begin{align*}
        \mathbb{E}_{\xi}
        \big[
        \|f_{\beta}(x, t_n)
        -\widetilde{f}^M_{\beta}
        (x, t_n)
        \|^2
        \big]
        \leq
        { 
        \frac{C_2 d}
        {M},}
    \end{align*}
    { where 
    $C_2
    :=
    \tfrac{4 L_g^2\beta^2}
    {\rho^2}    \Big(1+\tfrac{L_g^2}{\rho^2}\Big)$,}
    $L_g, \rho$ come from Assumption \ref{SFS-W2-ass:g-nabla-g-lip},
    and $M$ is the number of samples used in the Monte Carlo estimator \eqref{SFS-W2-eq:f-M-experssion}.
\end{lem}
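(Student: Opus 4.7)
The plan is to bound the Monte Carlo error of the ratio $\widetilde f^M_\beta=\hat A/\hat B$ by the errors in its numerator and denominator separately, then appeal to the variance formula for i.i.d.\ sample averages together with the Lipschitz continuity of $g_\beta$ and $\nabla g_\beta$. Set
$$A:=\beta\,\mathbb{E}_{\xi}[\nabla g_\beta(x+\sqrt{(1-t_n)\beta}\,\xi)],\qquad B:=\mathbb{E}_{\xi}[g_\beta(x+\sqrt{(1-t_n)\beta}\,\xi)],$$
and let $\hat A,\hat B$ denote their $M$-sample averages, so that $f_\beta(x,t_n)=A/B$ and $\widetilde f^M_\beta(x,t_n)=\hat A/\hat B$. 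The elementary identity
$$\frac{A}{B}-\frac{\hat A}{\hat B}=\frac{A-\hat A}{B}+\frac{\hat A(\hat B-B)}{B\hat B},$$
combined with the lower bound $B,\hat B\geq\rho$ from Assumption~\ref{SFS-W2-ass:g-nabla-g-lip} and the pointwise bound $\|\hat A\|\leq \beta L_g$ (which follows since $g_\beta$ being $L_g$-Lipschitz forces $\|\nabla g_\beta\|\leq L_g$), yields
$$\|f_\beta(x,t_n)-\widetilde f^M_\beta(x,t_n)\|^2\leq \frac{2}{\rho^2}\|A-\hat A\|^2+\frac{2\beta^2 L_g^2}{\rho^4}(\hat B-B)^2.$$

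I would then control each of the two Monte Carlo variances. Writing $U_j:=\beta\,\nabla g_\beta(x+\sqrt{(1-t_n)\beta}\,\xi_j)$ and $V_j:=g_\beta(x+\sqrt{(1-t_n)\beta}\,\xi_j)$, independence of the $\xi_j$ reduces the problem to the variances of $U_1$ and $V_1$, each divided by $M$. The key move is to upper bound each variance by the expected squared deviation from the \emph{constant} obtained by evaluation at the mean $x$ of $y:=x+\sqrt{(1-t_n)\beta}\,\xi$; invoking the $L_g$-Lipschitzness of $\nabla g_\beta$ and $g_\beta$ and the identity $\mathbb{E}_{\xi}[\|y-x\|^2]=(1-t_n)\beta d$, one obtains
$$\mathbb{E}_{\xi}\bigl[\|U_1-\mathbb{E}_{\xi}[U_1]\|^2\bigr]\leq \beta^2L_g^2(1-t_n)\beta\, d\leq \beta^3 L_g^2 d,\qquad \mathbb{E}_{\xi}\bigl[(V_1-\mathbb{E}_{\xi}[V_1])^2\bigr]\leq \beta L_g^2 d.$$
Substituting these estimates into the previous display produces the desired bound $\mathbb{E}_{\xi}[\|f_\beta(x,t_n)-\widetilde f^M_\beta(x,t_n)\|^2]\leq C_2 d/M$ with a constant $C_2$ of the announced form.

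The only genuinely delicate point is the appearance of the factor $d$. A crude use of the pointwise estimate $\|\nabla g_\beta\|\leq L_g$ (together with a similar bound for $g_\beta$) would yield a dimension-free upper bound. To recover the linear $d$-dependence in the statement one must compare each random quantity with its deterministic value at the mean $x$ and exploit the Lipschitz property, so that the dimension enters through $\mathbb{E}_{\xi}[\|y-x\|^2]=(1-t_n)\beta d$. Once this perspective is adopted, the remainder of the proof is a routine i.i.d.\ Monte Carlo variance calculation.
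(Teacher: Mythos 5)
Your argument is correct and follows essentially the same route as the proof the paper points to (\cite[Lemma A.6]{MRjiao}): decompose the error of the ratio estimator into numerator and denominator errors via the identity $\tfrac{A}{B}-\tfrac{\hat A}{\hat B}=\tfrac{A-\hat A}{B}+\tfrac{\hat A(\hat B-B)}{B\hat B}$, use $g_\beta\geq\rho$ and $\|\nabla g_\beta\|\leq L_g$ to control the prefactors, and then a standard i.i.d.\ variance computation in which the dimension enters through $\mathbb{E}_\xi\|\sqrt{(1-t_n)\beta}\,\xi\|^2=(1-t_n)\beta d$; your use of the variational property of the mean to centre at $g_\beta(x)$ is a clean way to make that last step precise. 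The only discrepancy is in bookkeeping: your computation yields the constant $\tfrac{2\beta^{3}L_g^{2}}{\rho^{2}}\bigl(1+\tfrac{L_g^{2}}{\rho^{2}}\bigr)$ rather than the stated $C_2=\tfrac{4\beta^{2}L_g^{2}}{\rho^{2}}\bigl(1+\tfrac{L_g^{2}}{\rho^{2}}\bigr)$ (the extra power of $\beta$ coming from $(1-t_n)\beta\leq\beta$), which does not affect the $Cd/M$ form of the bound or its use downstream.
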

See \cite[Lemma A.6]{MRjiao} for a similar proof of this lemma.
Now we can prove Theorem \ref{SFS-W2-thm:main-rerult-mc}.
\begin{proof}
For the considered sampler \eqref{SFS-W2-eq:Euler-scheme-mc}, we have
$\hat{Y}=\widetilde{Y}^M $ and $ \hat{f}_{\beta}=\widetilde{f}^M_{\beta}$.
{First, one can do the following error decomposition:
\begin{equation}
\begin{aligned}
\mathbb{E}
\bigg[
\Big\|
\sum_{i=1}^{n}
\mathcal{R}_i
\Big\|^2
\bigg]
& =
\mathbb{E}
\bigg[
\Big\|
\sum_{i=1}^n
\int_{t_{i-1}}^{{t_{i}}}
\big[
f_{\beta}  (X_s,s)
-
\widetilde{f}^M_{\beta}  
(
X_{t_{i-1}}, t_{i-1}
)
\big]
\,
\mathrm{\,d} s
\Big\|^2
\bigg]\\
& \leq
\underbrace{
2
\,
\mathbb{E}
\bigg[
\Big\|
\sum_{i=1}^n
\int_{t_{i-1}}^{{t_{i}}}
\big[
f_{\beta}  
(X_s,s)
-
f_{\beta}  
(
X_{t_{i-1}}, t_{i-1}
)
\big]
\,
\mathrm{\,d} s
\Big\|^2
\bigg]}_{=:J_1}\\
& \quad +
\underbrace{
2
\,
\mathbb{E}
\bigg[
\Big\|
\sum_{i=1}^n
\int_{t_{i-1}}^{{t_{i}}}
\big[
f_{\beta} 
(X_{t_{i-1}}, t_{i-1})
-
\widetilde{f}^M_{\beta}  
(
X_{t_{i-1}}, t_{i-1}
)
\big]
\,
\mathrm{\,d} s
\Big\|^2
\bigg]}_{=:J_2},
\end{aligned}
\end{equation}
where $J_1$ has been estimated in the proof of Theorem \ref{SFS-W2-thm:main-rerult-no-mc}:
\begin{align}
\label{SFS-W2-eq:J_1_J_3}
    J_1
    \leq
    2 C_1 d^2 h^2.
\end{align}
Now it remains to estimate $J_2$.
Thanks to the inequality $(\sum_{i=1}^n u_i )^2 \leq n \sum_{i=1}^n u_i^2$, the property of the conditional expectation \cite[Theorem 2.24]{klebaner2012introduction} and Lemma \ref{SFS-W2-lem:jiao-M-error}, one can get
\begin{align}
\label{sfs-w2-eq:est-j2}
        J_2
        & \leq
        2 n h^2
        \sum_{i=0}^{n-1}
        \mathbb{E}
        \Big[
        \big\|
        f_{\beta}\big(
        X_{t_i},t_i
        \big)
        -
        \widetilde{f}^M_{\beta}
        \big(
        X_{t_i},t_i
        \big)
        \big\|^2
        \Big]
        \nonumber\\
        & =
        2 h
        \sum_{i=0}^{n-1}
        \mathbb{E}
        \left[
        \mathbb{E}
        \left[
        \big\|
        f_{\beta}\big(
        X_{t_i},t_i
        \big)
        -
        \widetilde{f}^M_{\beta}
        \big(
        X_{t_i},t_i
        \big)
        \big\|^2
        \Big|
        X_{t_i}
        \right]
        \right]
        \nonumber\\
        & =
        2 h
        \sum_{i=0}^{n-1}
        \mathbb{E}_W
        \left[
        \mathbb{E}_{\xi}
        \left[
        \big\|
        f_{\beta}
        \big(
        X_{t_i},t_i
        \big)
        -
        \widetilde{f}^M_{\beta}
        \big(
        X_{t_i},t_i
        \big)
        \big\|^2
        \right]
        \right]
        \nonumber\\
        & \leq
        \frac{2 C_2 d}
        {M}.
\end{align}
In view of \eqref{SFS-W2-eq:J_1_J_3} and \eqref{sfs-w2-eq:est-j2}, and by the discrete Gronwall inequality \cite[Lemma 2.3]{kruse2019randomized}, we derive
\begin{align}
\mathbb{E}
\Big[
\big\|
X_{1}
-
\widetilde{Y}^M_{1}
\big\|^2
\Big]
& \leq
2
\Big(
C_1 d^2 h^2
+
\tfrac{C_2 d}{M}
\Big)
\exp 
\big(
\sum_{i=1}^N
2 L_f^2 h
\big)
= 
2
\Big(
C_1 d^2 h^2
+
\tfrac{C_2 d}{M}
\Big)
\exp 
\big(
2 L_f^2
\big).
\end{align}
All together, we obtain
\begin{equation}
\label{sfs-w2-eq:l2-total-bound}
\mathbb{E}
\Big[
\big\|
X_{1}
-
\widetilde{Y}^M_{1}
\big\|^2
\Big] 
\leq
C d^2 h^2
+
\frac{C d}{M},
\end{equation}
where
$C
: =
2
e^{2 L_f^2}\cdot\max\{C_1,C_2\}$.
The proof of Theorem \ref{SFS-W2-thm:main-rerult-mc} is completed.}
\end{proof}

\section{Numerical experiments}
\label{SFS-W2-section:Numerical-experiment}
\noindent
In order to test the efficiency and quality of the proposed sampling algorithms, some numerical results are performed in this section. 
We illustrate our findings via the one-dimensional, two-dimensional, high-dimensional Gaussian mixture distributions and other complex distributions.
%
\subsection{Three shaped distributions in two dimensions}
In this subsection, we consider sampling from the following two-dimensional distributions with probability density functions (PDF):
$$
    p_1(x)
    =
    c_1
    \,\mathrm{e}^{-\left(\left(x_1 x_2\right)^2+x_1^2+x_2^2-8\left(x_1+x_2\right)\right) / 2}, 
$$
$$
    p_2(x) 
    =
    c_2
    \,\mathrm{e}^{-\tfrac{\left(r-r_0\right)^2}{2 \sigma^2}}, 
    \quad r=\sqrt{x_1^2+x_2^2},
    ~r_0=2,~\sigma=\tfrac{1}{5},
$$
$$
    p_3(x) 
    =
    c_3
    \,
    \mathrm{e}^{-x_1^2/2}
    \mathrm{e}^{-x_2^2/2\mathrm{e}^{2\alpha x_1}}, 
    \quad \alpha=\tfrac{3}{5},
$$
where $c_1,c_2,c_3 $ are {normalizing} constants.
Here, $p_1(x)$ is taken from \cite[Example 6.4]{rubinstein2016simulation}, $p_2$ is the Ring distribution, with $r_0$ being the ring center radius and $\sigma$ being the ring width, and $p_3(x)$ is the Anisotropic Funnel distribution, with the parameter $\alpha >0$ controlling the shape of the funnel.
We simulate $2000$ independent Markov chains using SFS algorithm \eqref{SFS-W2-eq:Euler-scheme-mc} with $\beta=1$ and step-size $h=10^{-3}$.
In Figures \ref{figure:p1}, \ref{figure:ringsdistribution}, \ref{figure:gmd_4_density}, 
the left pictures present the true contour plots, while the right ones depict the scatter plots of sampled points.
Evidently, the SFS gives a good performance in sampling from these complex distributions.
\begin{figure}[htbp]
    \centering

    \begin{subfigure}[b]{0.45\textwidth}
        \centering
        \includegraphics[width=\linewidth]{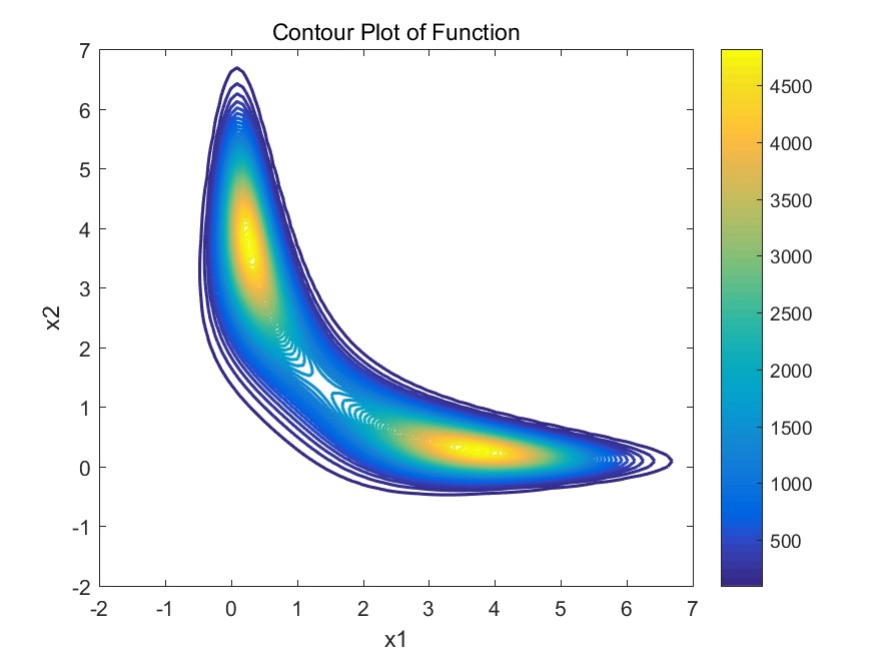}
    \end{subfigure}
    \hspace{-0.5cm} 
    \begin{subfigure}[b]{0.45\textwidth}
        \centering
        \includegraphics[width=\linewidth]{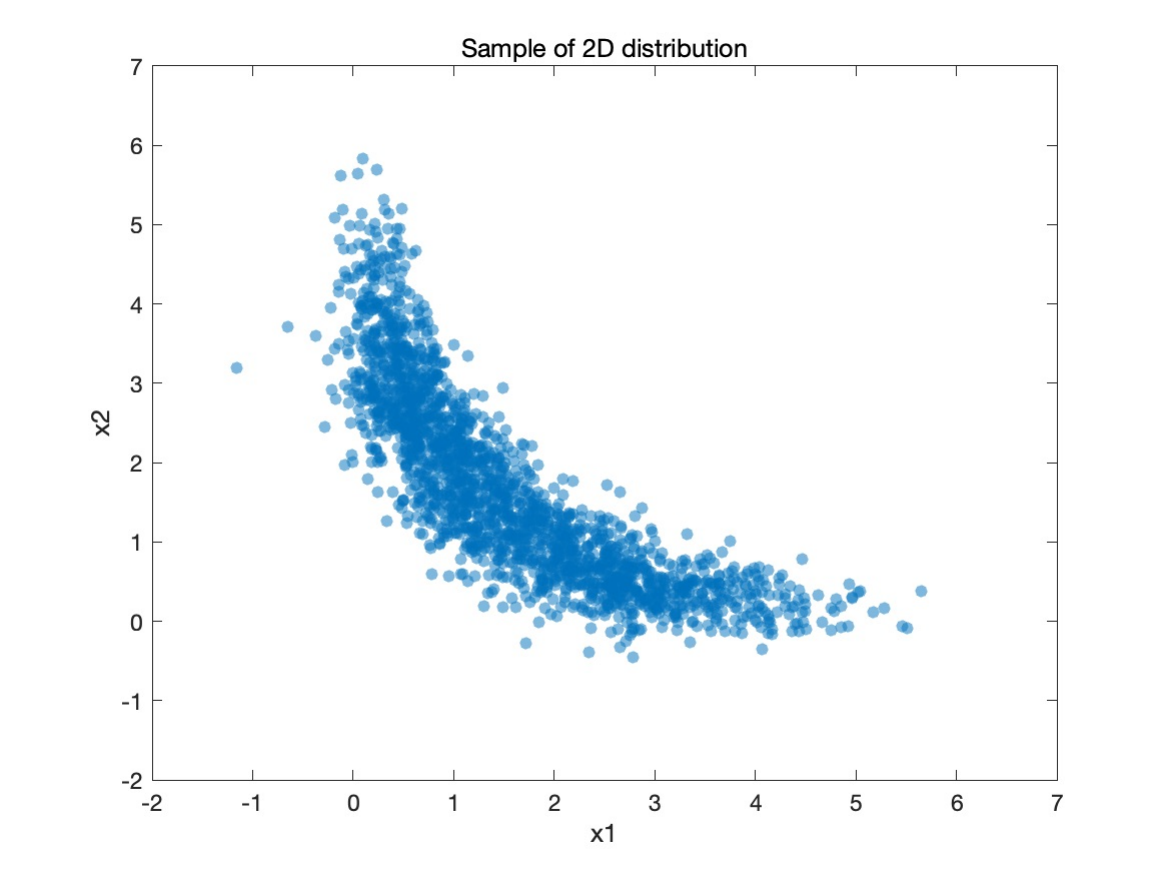}
    \end{subfigure}
    \caption{Sampling from $p_1(x)$ with SFS.}
    \label{figure:p1}
    \vspace{1em} 

    \begin{subfigure}[b]{0.45\textwidth}
        \centering
        \includegraphics[width=\linewidth]{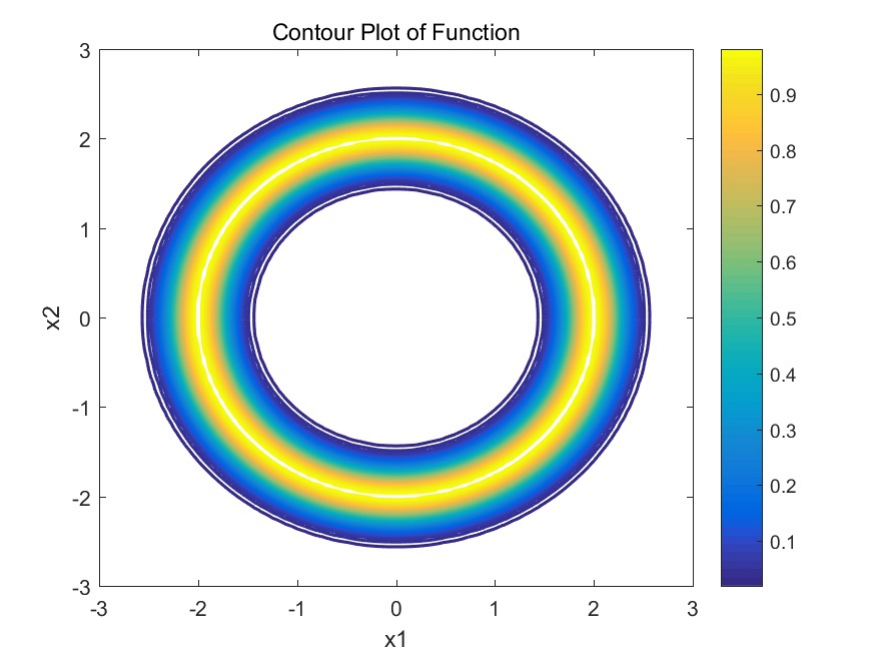}
    \end{subfigure}
    \hspace{-0.5cm} 
    \begin{subfigure}[b]{0.45\textwidth}
        \centering
        \includegraphics[width=\linewidth]{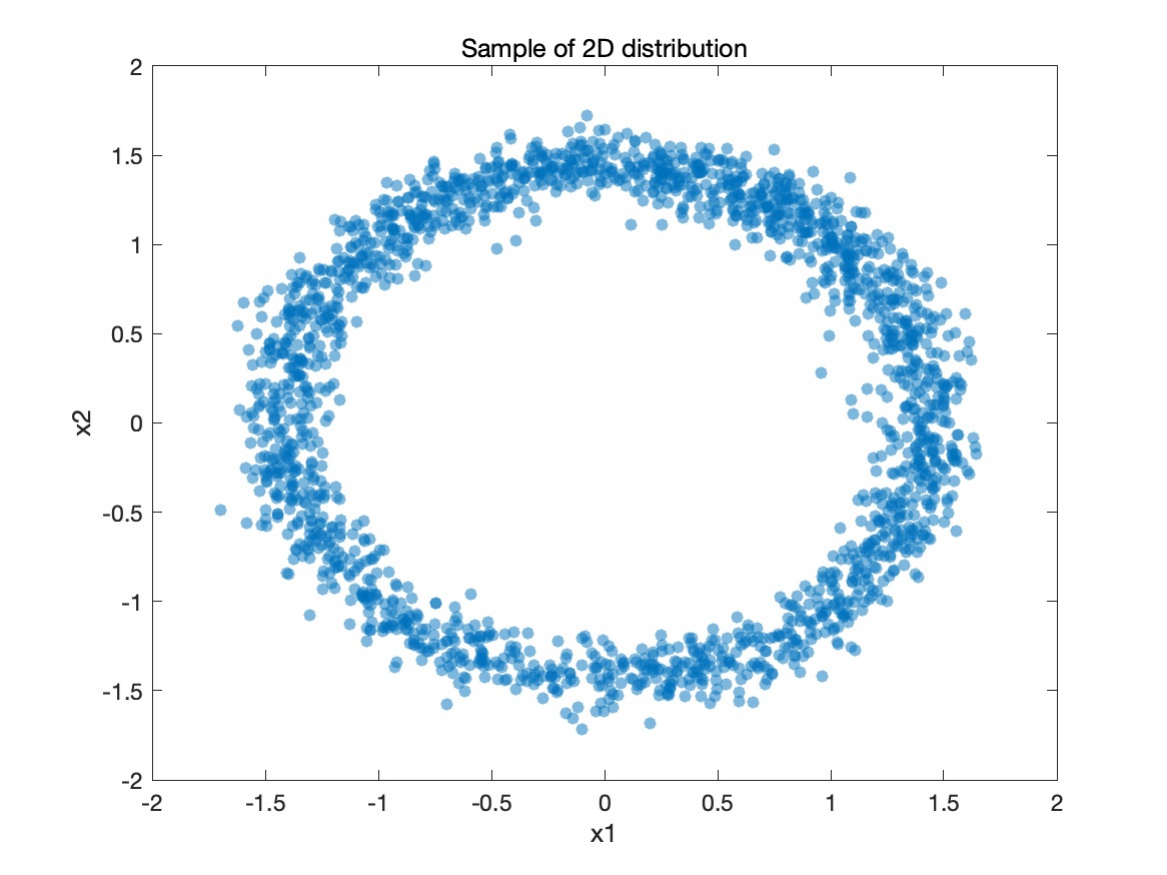}
    \end{subfigure}
    \caption{Sampling from $p_2(x)$ with SFS.}
    \label{figure:ringsdistribution}
    
    \vspace{1em}
    
    \begin{subfigure}[b]{0.45\textwidth}
        \centering
        \includegraphics[width=\linewidth]{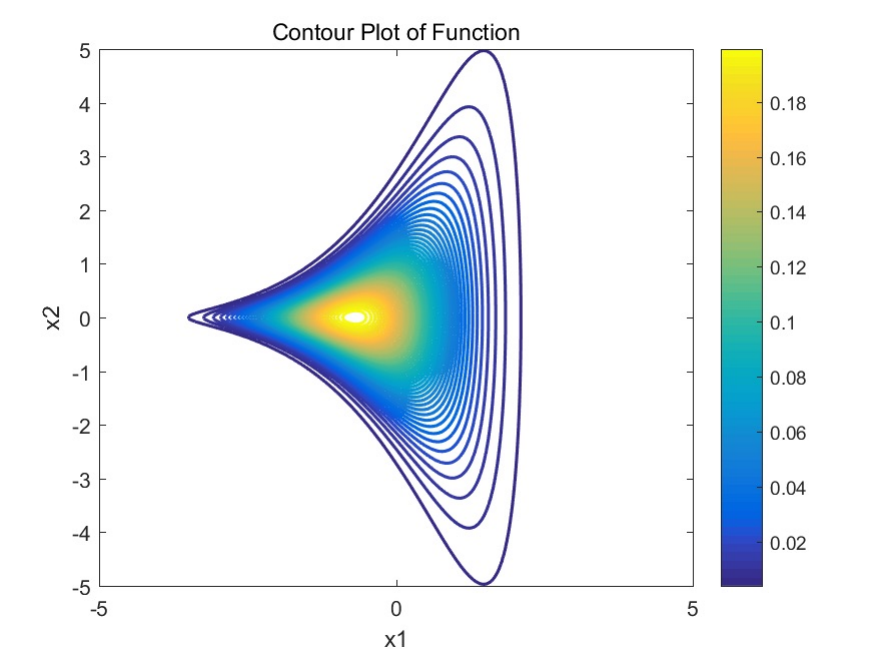}
    \end{subfigure}
    \hspace{-0.5cm} 
    \begin{subfigure}[b]{0.45\textwidth}
        \centering
        \includegraphics[width=\linewidth]{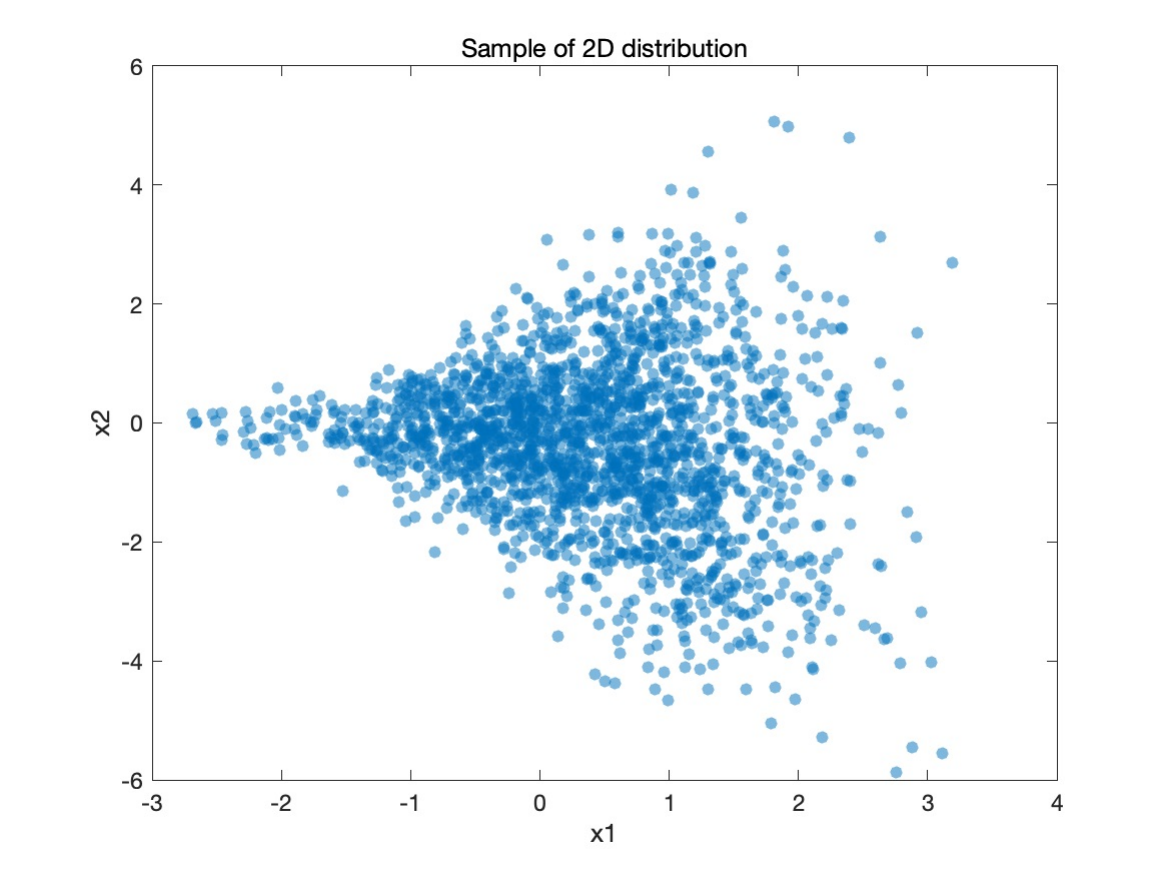}
    \end{subfigure}
    \caption{Sampling from $p_3(x)$ with SFS.}
    \label{figure:gmd_4_density}
\end{figure}

\subsection{Gaussian mixture distributions}
\label{subsection:Gaussian-mixture}
In this subsection, we focus on sampling from one-dimensional, two-dimensional and high-dimensional Gaussian mixture distributions. 
\subsubsection{One-dimensional Gaussian mixture distributions}
First, consider sampling from the following one-dimensional Gaussian mixture distribution,
\begin{align*}
    p_4(x)
    =
    \tfrac{3}{4}
    \mathcal{N}(x ;\alpha_1,\Sigma_1)
    +
    \tfrac{1}{4}
    \mathcal{N}(x ;\alpha_2,\Sigma_2),
\end{align*}
where $\alpha_1, \alpha_2 \in \mathbb{R}$ and $\Sigma_1=\tfrac{1}{5},\Sigma_2=\tfrac{4}{5}$ are the mean and variance of the Gaussian component, respectively.
{ For a fixed step-size $h=10^{-3}$, we simulate $1000$ independent samples using the SFS \eqref{SFS-W2-eq:Euler-scheme-no-mc} with $\beta=1$ up to terminal time 
$T=1$, the overdamped LMC algorithm \eqref{SFS-W2-eq:ULA} and underdamped LMC algorithm \eqref{SFS-W2-eq:under-EM} up to $T = 10.$}
In Figures \ref{figure:gmd_lmc_different_cov_density_1},
the PDF obtained through SFS is depicted by red circles, whereas those computed via the overdamped LMC and underdamped LMC  are plotted by blue stars and green dashed line, respectively.
For reference, the exact density is plotted as solid black line.

%
One can observe that, in the case $\alpha_1=-2, \alpha_2=2$,  the overdamped LMC and SFS both exhibit bimodal distributions. As shown in Figure \ref{figure:gmd_lmc_different_cov_density_1} (a), SFS demonstrates significantly better performance than the overdamped LMC.
%
%
When the distance between two means of the Gaussian components increases (e.g., $\alpha_1=-6, \,\alpha_2=6$),  only samples from SFS can accurately recover the underlying target distribution with two peaks, while the other Langevin algorithms including overdamped and underdamped LMC give a poor performance and collapse on only one mode, as shown in Figure \ref{figure:gmd_lmc_different_cov_density_1} (b).

\begin{figure}[htbp]
    \centering
    \begin{subfigure}{0.45\linewidth}  
        \includegraphics[width=\linewidth]{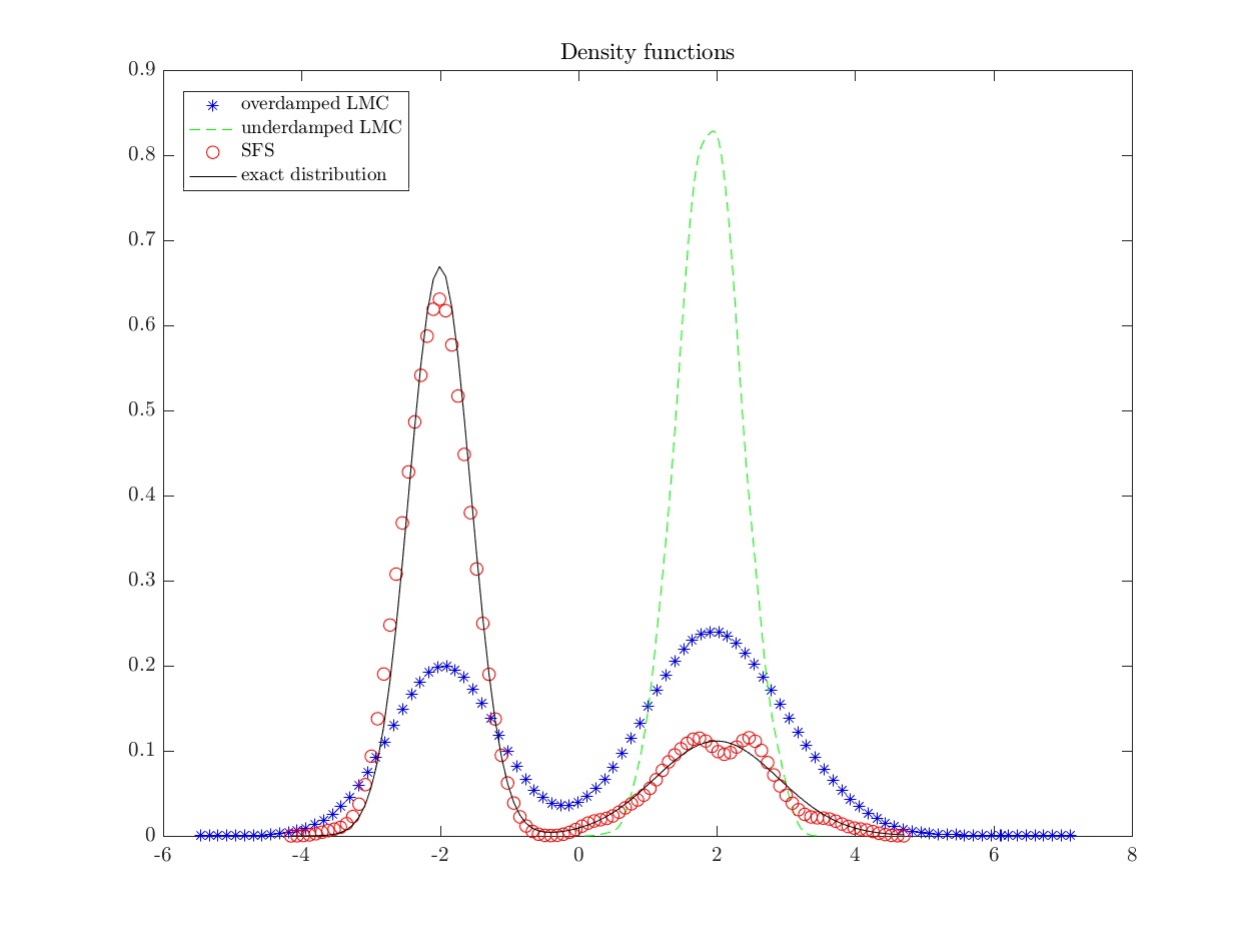} 
        \caption{$\alpha_1=-2,~\alpha_2=2$}	
    \end{subfigure}
    \hspace{-0.5cm} 
    \begin{subfigure}{0.45\linewidth}
        \includegraphics[width=\linewidth]{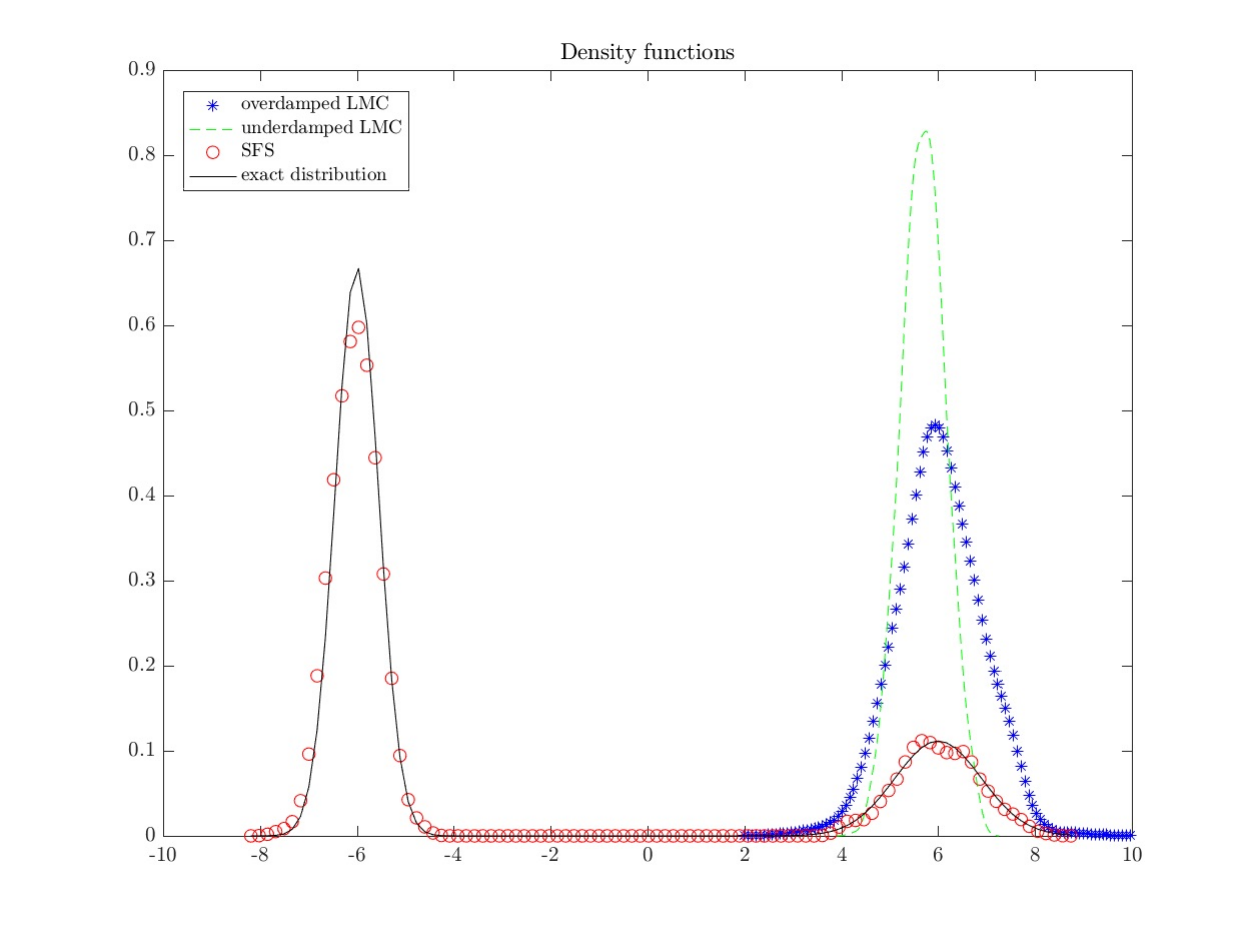}
        \caption{$\alpha_1=-6,~\alpha_2=6$}	
    \end{subfigure}
    \caption{Probability density of the one-dimensional Gaussian mixture distribution.}
    \label{figure:gmd_lmc_different_cov_density_1}
\end{figure}

\subsubsection{Two-dimensional Gaussian mixture distributions}
Next, we consider the following two-dimensional Gaussian mixture distributions,
$$
    p_5(x) 
    =
    \tfrac{1}{\kappa}
    \sum_{i=1}^\kappa
    \mathcal{N}(x;\alpha_i, \Sigma_i),
    \quad
    \kappa \in \mathbb{N},
$$
where $\alpha_i
=\lambda_i(\sin (2(i-1) \pi / \kappa), \cos (2(i-1) \pi / \kappa))$ are the mean value and $\Sigma_i \in 
\mathbb{R}^{2\times2}$ are the two-dimensional covariance matrix.

\textbf{Sampling via SFS and Langevin algorithms:}
We compare the SFS algorithm \eqref{SFS-W2-eq:Euler-scheme-no-mc} with two Langevin sampling algorithms: the overdamped LMC \eqref{SFS-W2-eq:ULA} and the BAOAB method for the underdamped Langevin dynamics \cite{LBNC2013}.
For a uniform step-size $h=10^{-3}$, we simulate $2000$ independent paths.
%
In Figure \ref{fig:gmd_8}, we set $\kappa=8, \, \lambda_1=\lambda_3=\lambda_5=\lambda_7=6,\,\lambda_2=\lambda_4=\lambda_6=\lambda_8=2\sqrt{3}$ and $\Sigma_i=\tfrac{1}{5}\mathbf{I}_2, \, i=1,\cdots,8$.
The top panel presents the true contour plots. The left panel shows both scatter and contour plots of points sampled using SFS \eqref{SFS-W2-eq:Euler-scheme-no-mc} with $\beta=1$ { up to the terminal time $T=1$,} while the middle panel displays corresponding plots generated by overdamped LMC  \eqref{SFS-W2-eq:ULA} { up to the $T=10$.} The right panel shows the sampling results obtained by the BAOAB method { up to $T=10$.}
One can observe that the SFS sampler exhibits significantly better sampling performance than the other two Langevin algorithms, which exhibit only 
four peaks in Figure \ref{fig:gmd_8}.
\begin{figure}[htbp]
    \centering
    \begin{minipage}{\textwidth}
        \centering

        \begin{subfigure}{0.8\textwidth} 
            \centering
            \includegraphics[width=\linewidth, height=5cm, keepaspectratio]{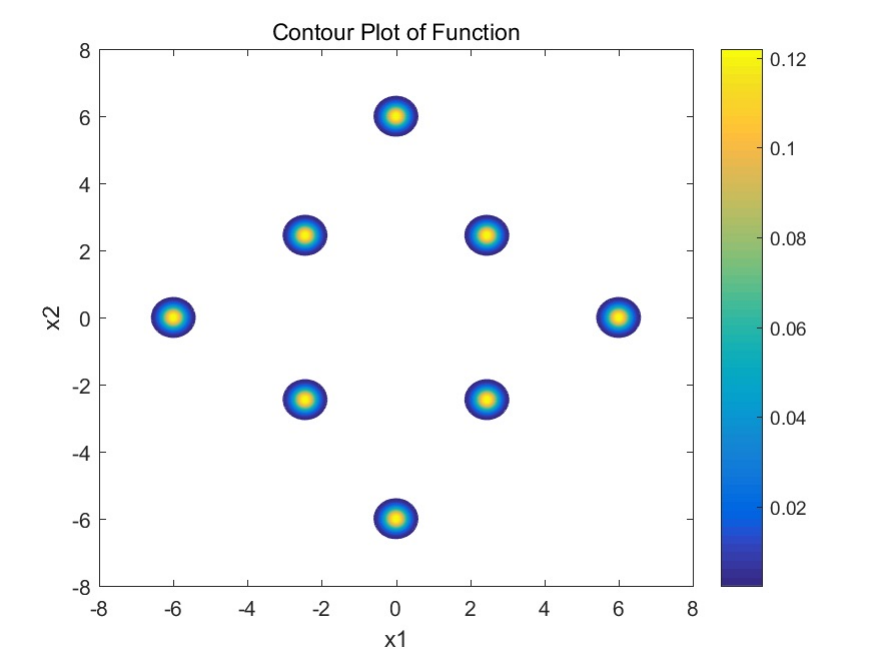} 
            \caption{True contour plot for $p_5$ with $\kappa=8$} 
        \end{subfigure}
        
        \vspace{0.3cm} 
     
        \begin{minipage}{\textwidth}
            \centering
            \begin{subfigure}{0.32\textwidth} 
                \includegraphics[width=\linewidth, height=4.5cm]{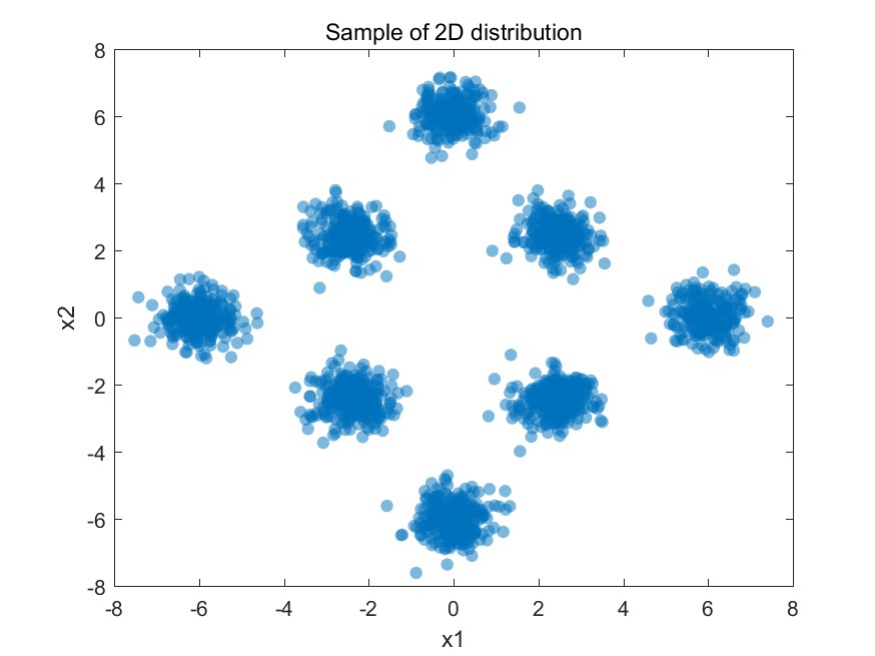} 
                \caption{SFS (scatter)}
            \end{subfigure}
            \hfill
            \begin{subfigure}{0.32\textwidth} 
                \includegraphics[width=\linewidth, height=4.5cm]{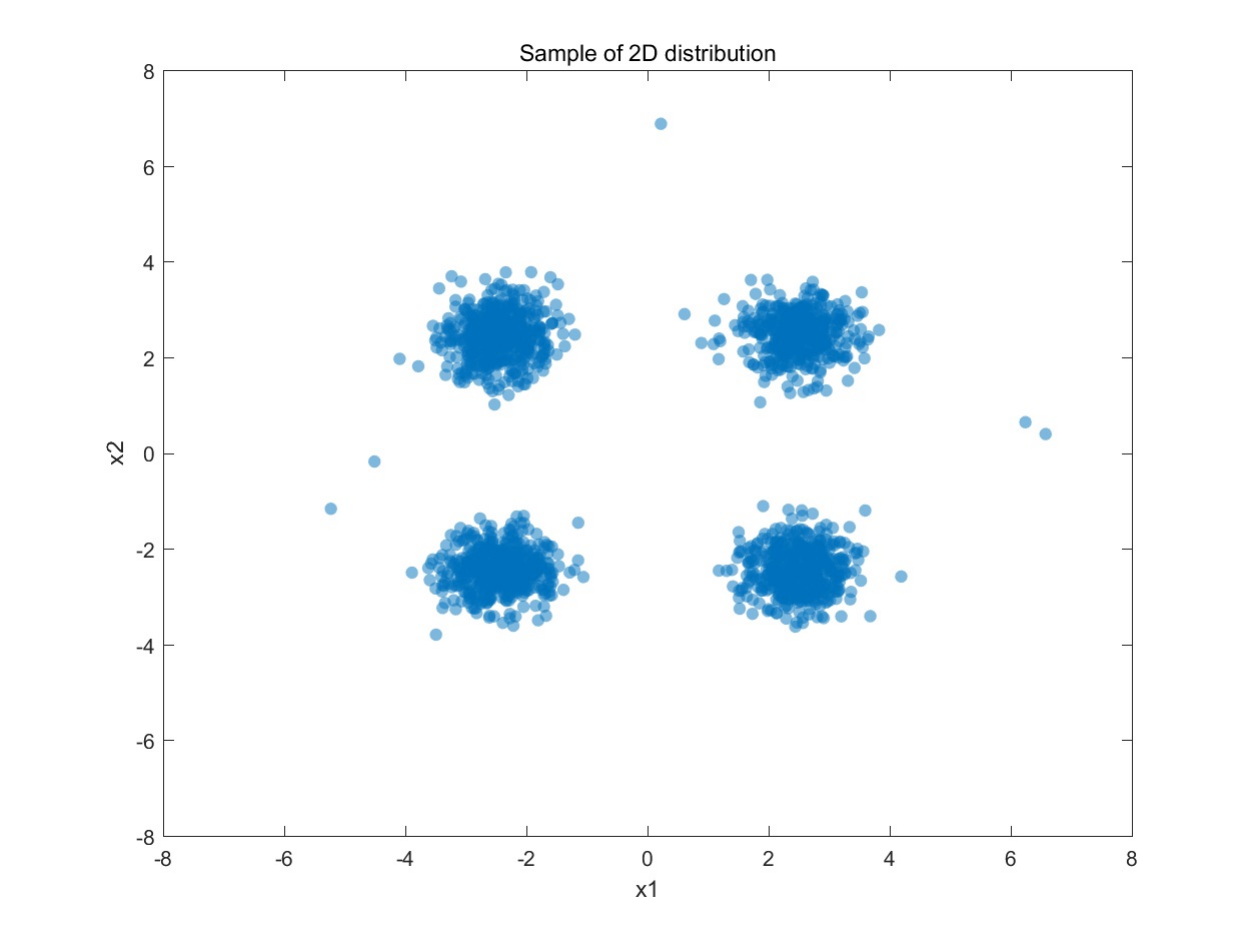}
                \caption{Overdamped LMC (scatter)}
            \end{subfigure}
            \hfill
            \begin{subfigure}{0.32\textwidth} 
                \includegraphics[width=\linewidth, height=4.5cm]{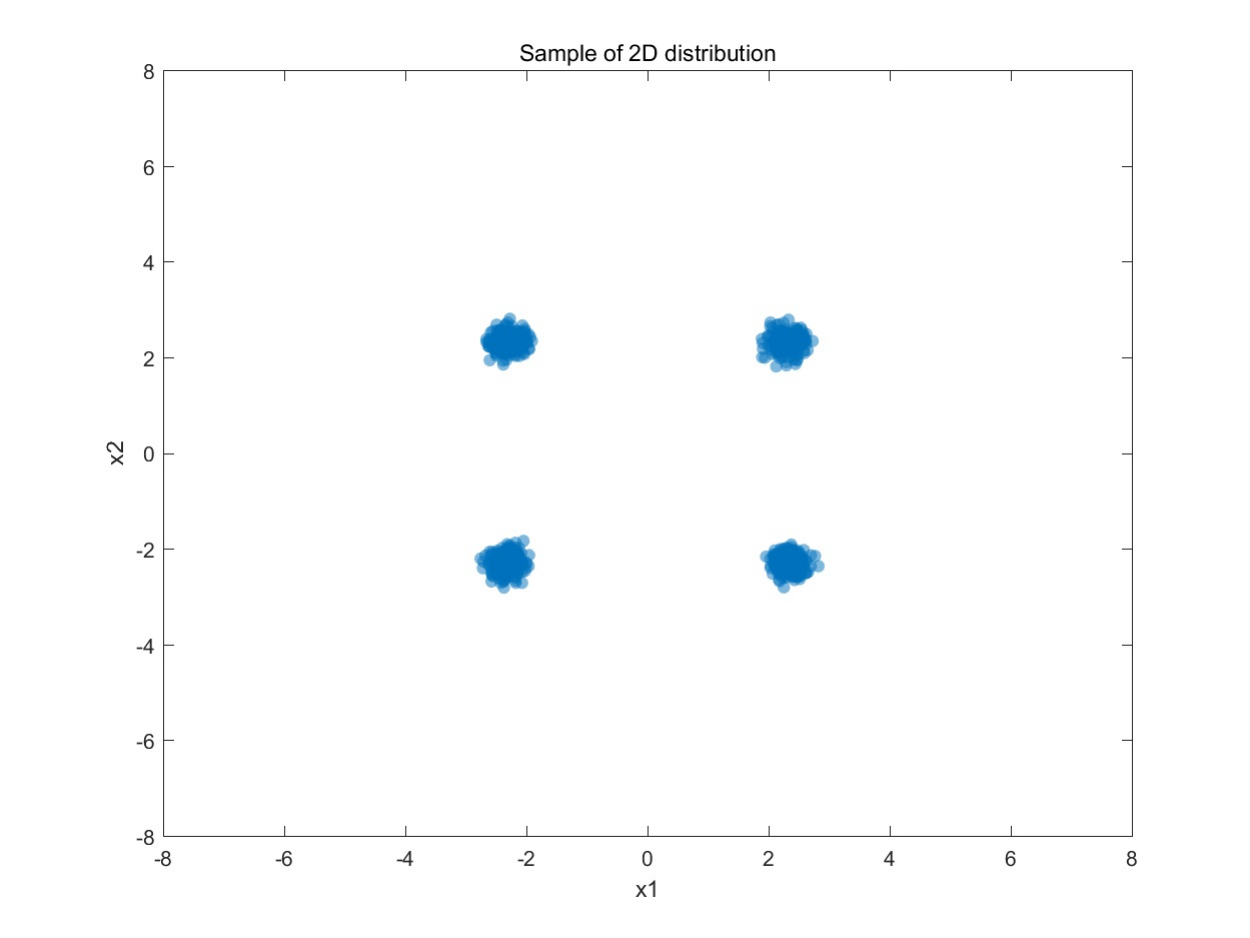} 
                \caption{BAOAB (scatter)} 
            \end{subfigure}
        \end{minipage}

        \vspace{0.3cm}

        \begin{minipage}{\textwidth}
            \centering
            \begin{subfigure}{0.32\textwidth} 
                \includegraphics[width=\linewidth, height=4.5cm]{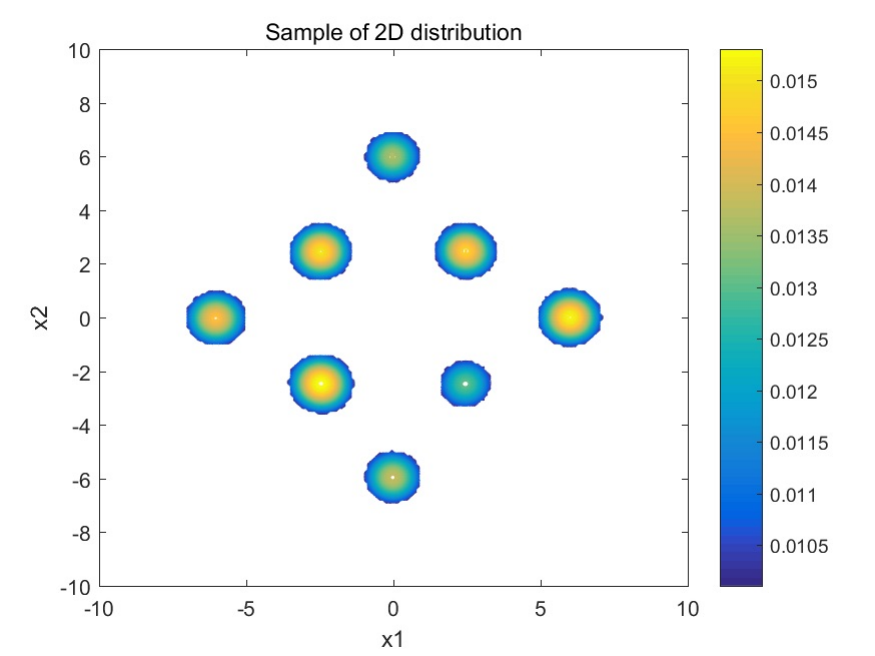} 
                \caption{SFS (contour)}
            \end{subfigure}
            \hfill
            \begin{subfigure}{0.32\textwidth} 
                \includegraphics[width=\linewidth, height=4.5cm]{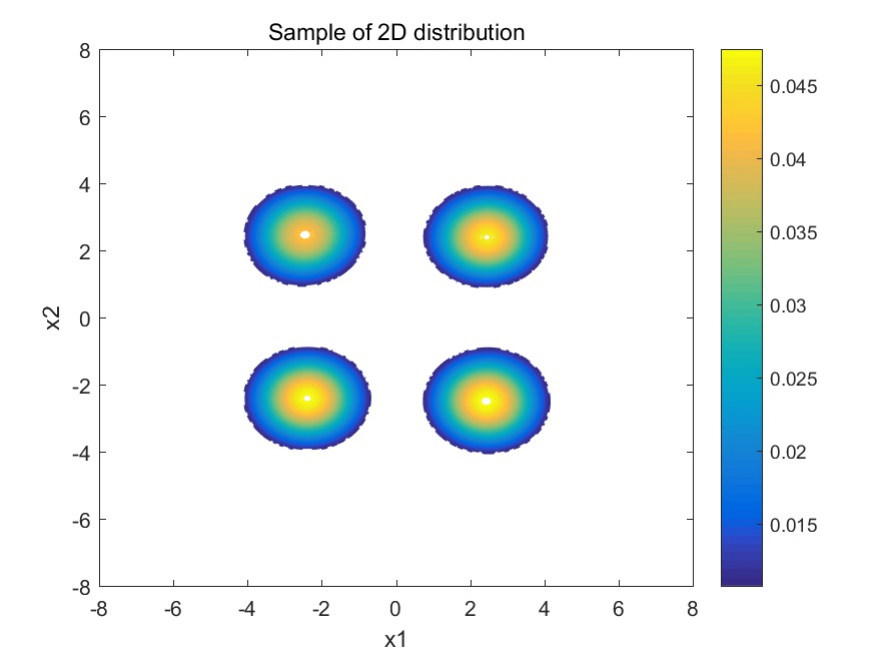}
                \caption{Overdamped LMC (contour)}
            \end{subfigure}
            \hfill
            \begin{subfigure}{0.32\textwidth} 
                \includegraphics[width=\linewidth, height=4.5cm]{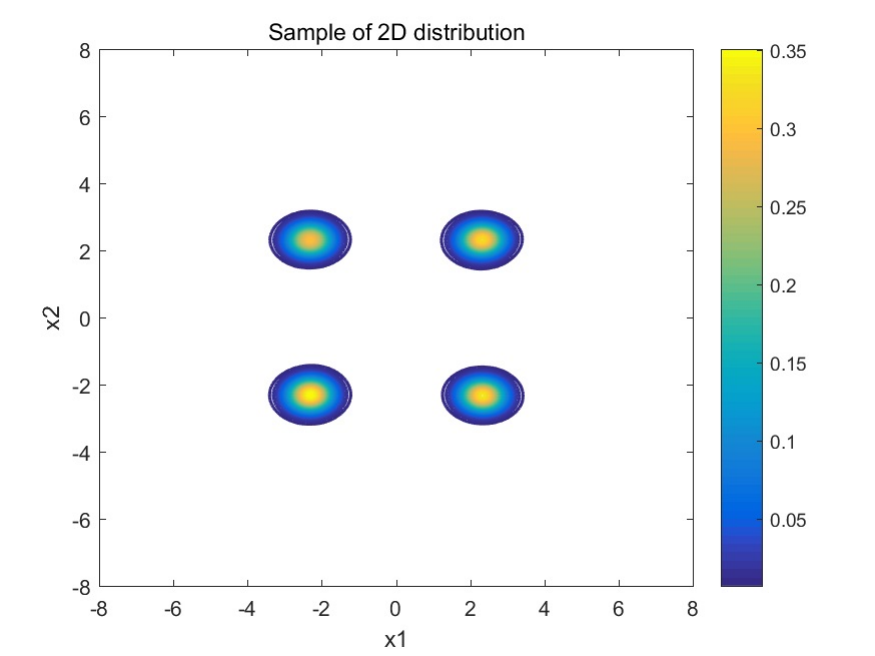} 
                \caption{BAOAB (contour)} 
            \end{subfigure}
        \end{minipage}

        \caption{
           Sampling via SFS, Overdamped LMC, and BAOAB.
        }
        \label{fig:gmd_8}
    \end{minipage}
\end{figure}

\textbf{Sampling via SFS with different temperatures $\beta$:}
To demonstrate the impact of $\beta$, we test the SFS algorithm \eqref{SFS-W2-eq:Euler-scheme-mc} with different temperatures $\beta=1,2,5$, where the drift is approximated by using Monte Carlo methods with $M=200$ samples.
For a uniform time step-size $h=10^{-3}$, we simulate $2000$ independent Markov chains,
for two target distributions: $\kappa=4, \,\lambda_i=6,\,i=1,\cdots,4$ , $\Sigma_1=\Sigma_3=\tfrac{1}{5}\mathbf{I}_2, \Sigma_2=\Sigma_4=\tfrac{2}{5}\mathbf{I}_2$
and $\kappa=8, \, \lambda_1=\lambda_3=\lambda_5=\lambda_7=6,\,\lambda_2=\lambda_4=\lambda_6=\lambda_8=2\sqrt{3}, \,\Sigma_i=\tfrac{1}{5}\mathbf{I}_2, \, i=1,\cdots,8$.
In Figure \ref{fig:2-beta}, the top panel displays the true contour plots and the other ones present scatter plots of SFS with different temperatures $\beta$ (from top to bottom: $\beta =1, 2, 5$). For the case $\beta =1$, the SFS algorithm \eqref{SFS-W2-eq:Euler-scheme-mc} reduces into the SFS algorithm with inexact drift in \cite{MRjiao} and gives reduced modes in sampling. As the temperature $\beta$ increases to  $\beta =2, 5$, the SFS algorithms show significant improvements in sampling performance and generate high-quality samples with the desired numbers of modes, as observed in Figure \ref{fig:2-beta}.
%
\begin{figure}[htbp]
    \centering
    \begin{tabular}{@{}c cc@{}}
    
    \multirow{4}{*}
    &
    \begin{subfigure}[b]{0.4\textwidth}
        \includegraphics[width=\textwidth]{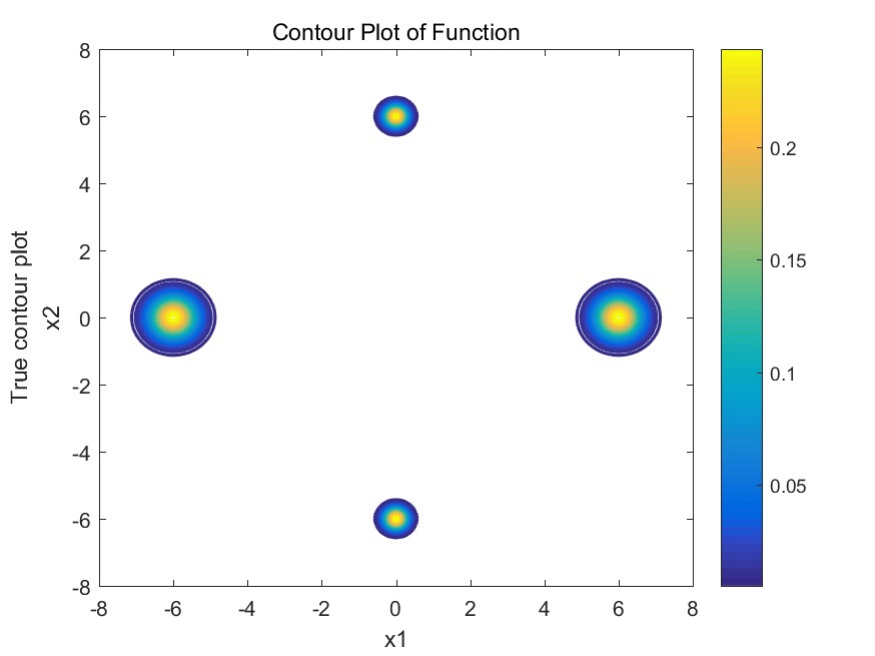}
        \label{fig:sub1}
    \end{subfigure} &
    \begin{subfigure}[b]{0.4\textwidth}
        \includegraphics[width=\textwidth]{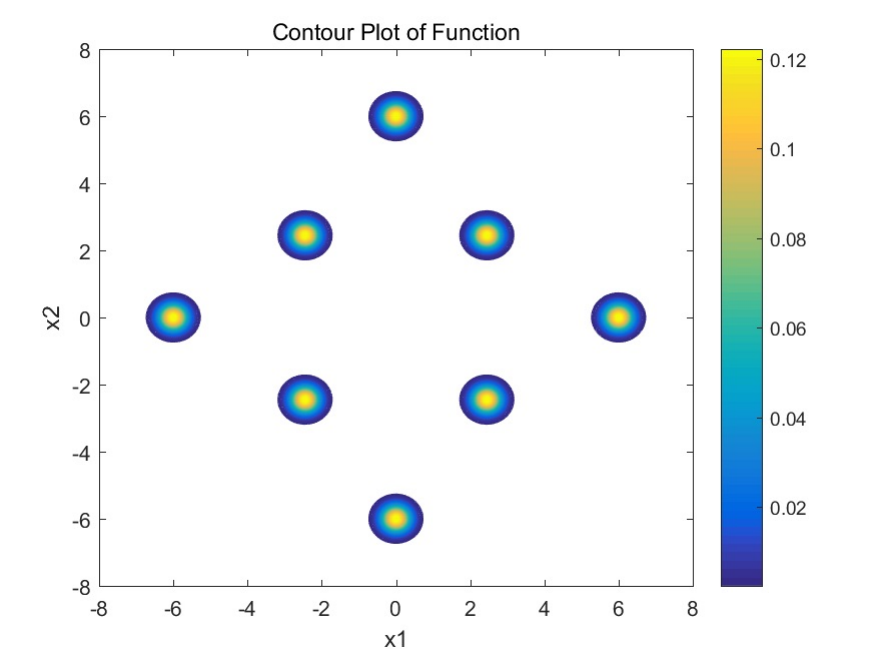}
        \label{fig:sub2}
    \end{subfigure} \\[-9pt] 
    
    &
    \begin{subfigure}[b]{0.4\textwidth}
        \includegraphics[width=\textwidth]{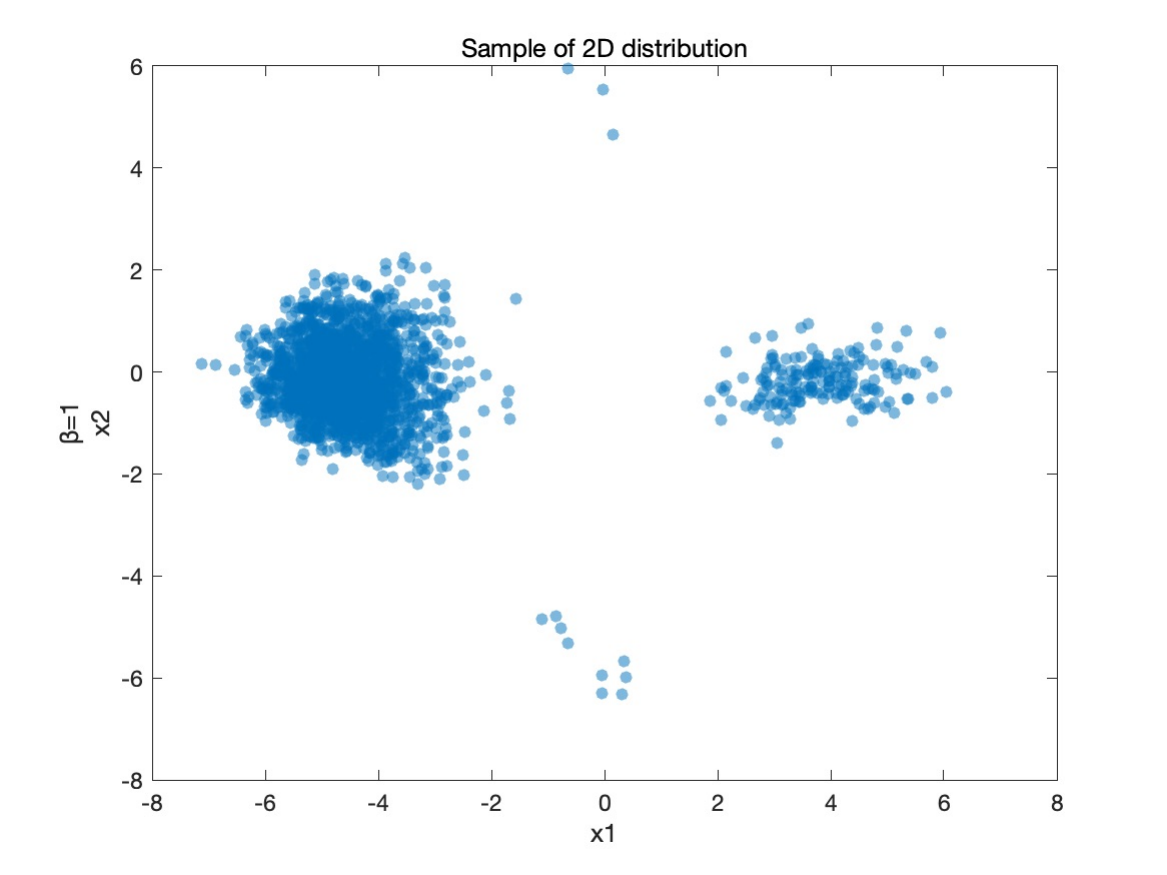}
        \label{fig:sub3}
    \end{subfigure} &
    
    \begin{subfigure}[b]{0.4\textwidth}
        \includegraphics[width=\textwidth]{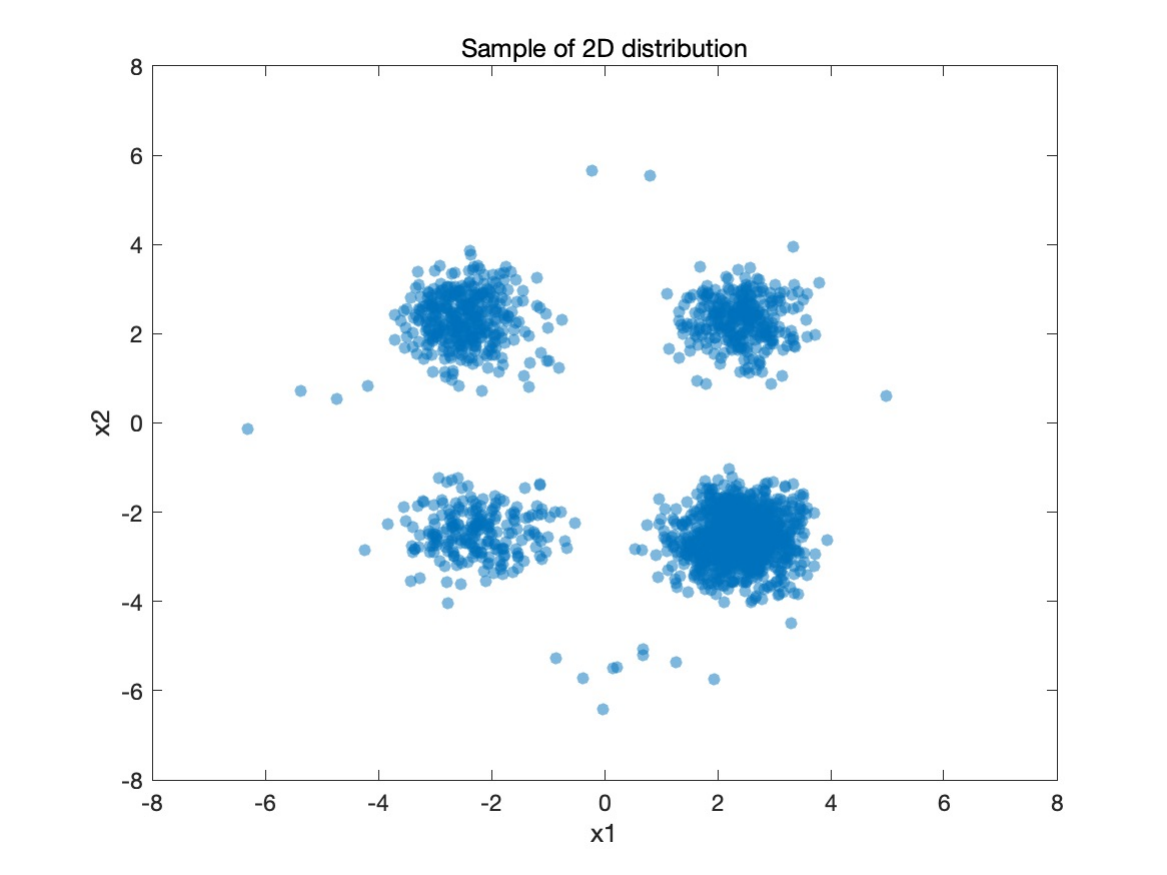}
        \label{fig:sub4}
    \end{subfigure} \\[-9pt]
    
    &
    \begin{subfigure}[b]{0.4\textwidth}
        \includegraphics[width=\textwidth]{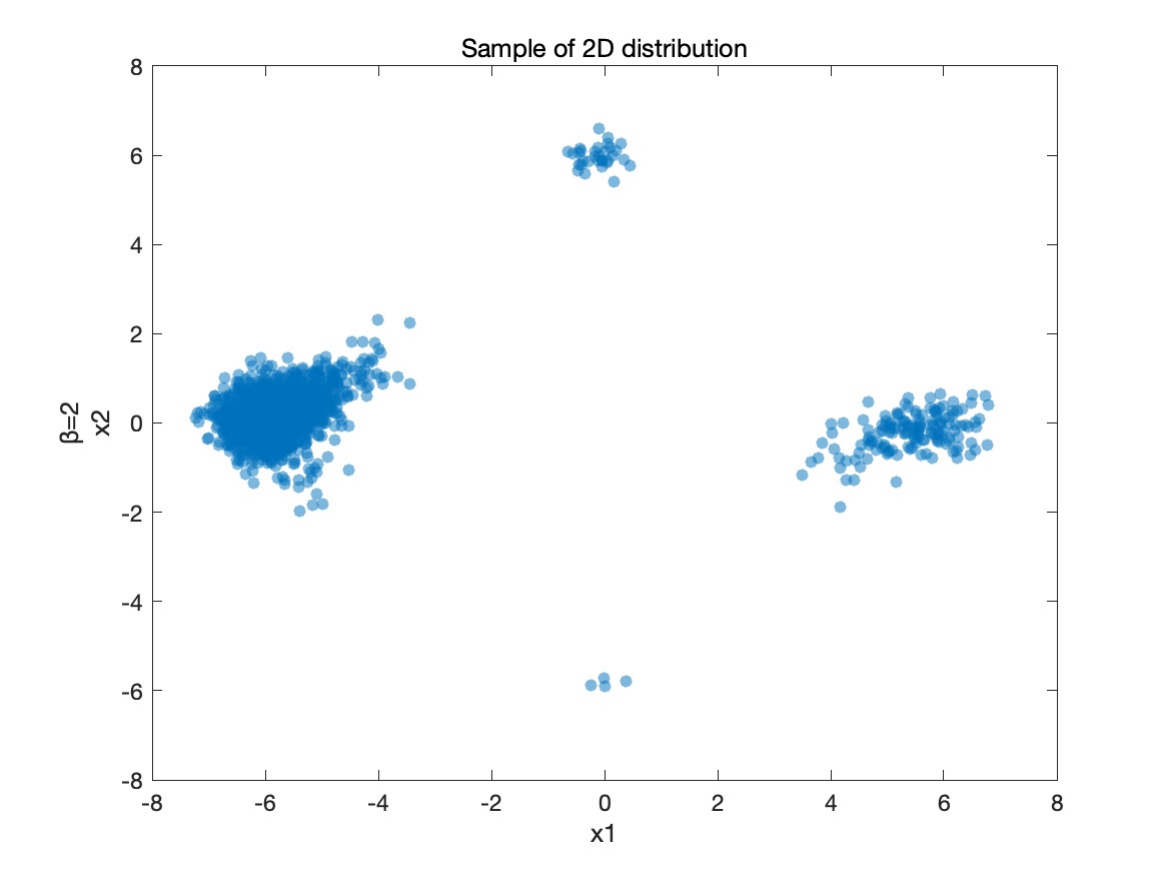}
        \label{fig:sub5}
    \end{subfigure} &
    \begin{subfigure}[b]{0.4\textwidth}
        \includegraphics[width=\textwidth]{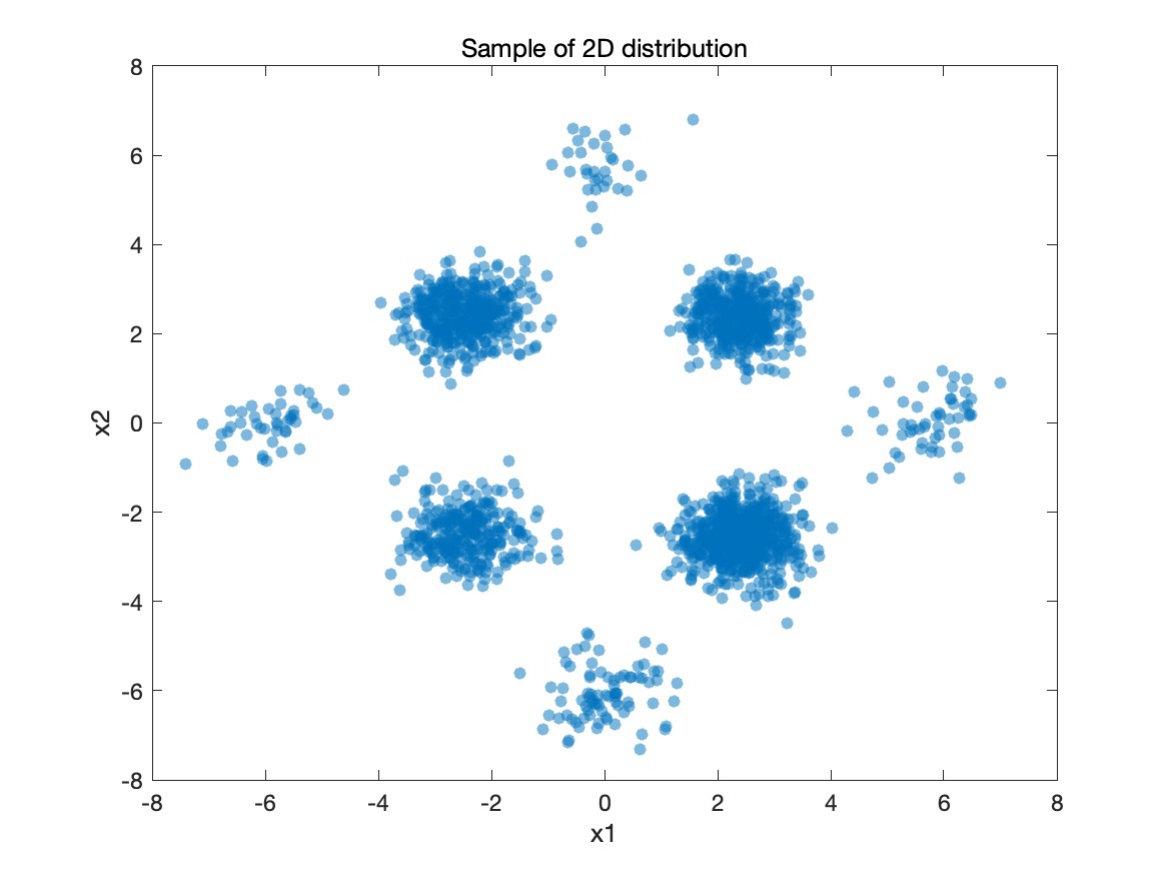}
        \label{fig:sub6}
    \end{subfigure} \\[-9pt]

    &
    \begin{subfigure}[b]{0.4\textwidth}
        \includegraphics[width=\textwidth]{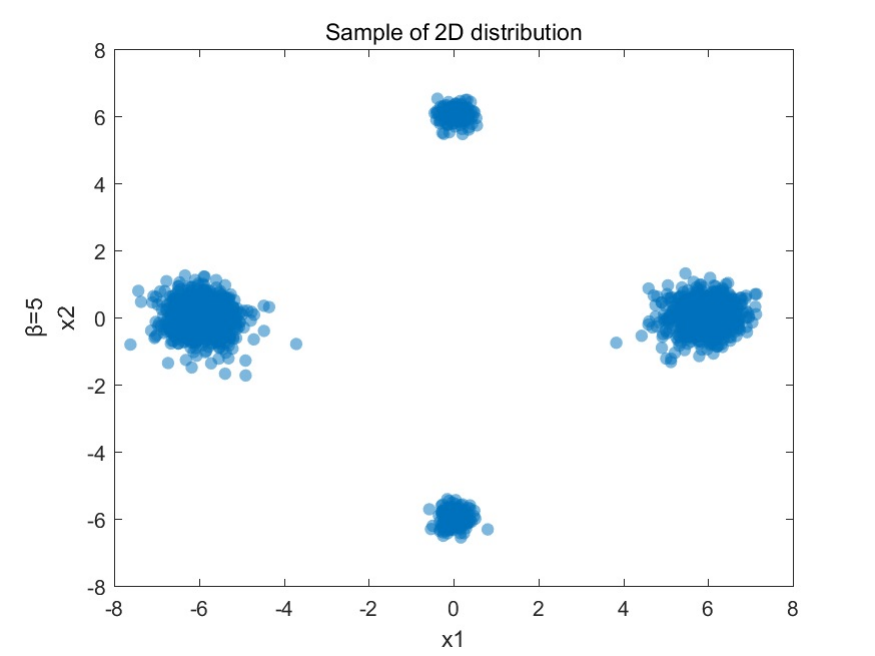}
        \label{fig:sub7}
    \end{subfigure} &
    \begin{subfigure}[b]{0.4\textwidth}
        \includegraphics[width=\textwidth]{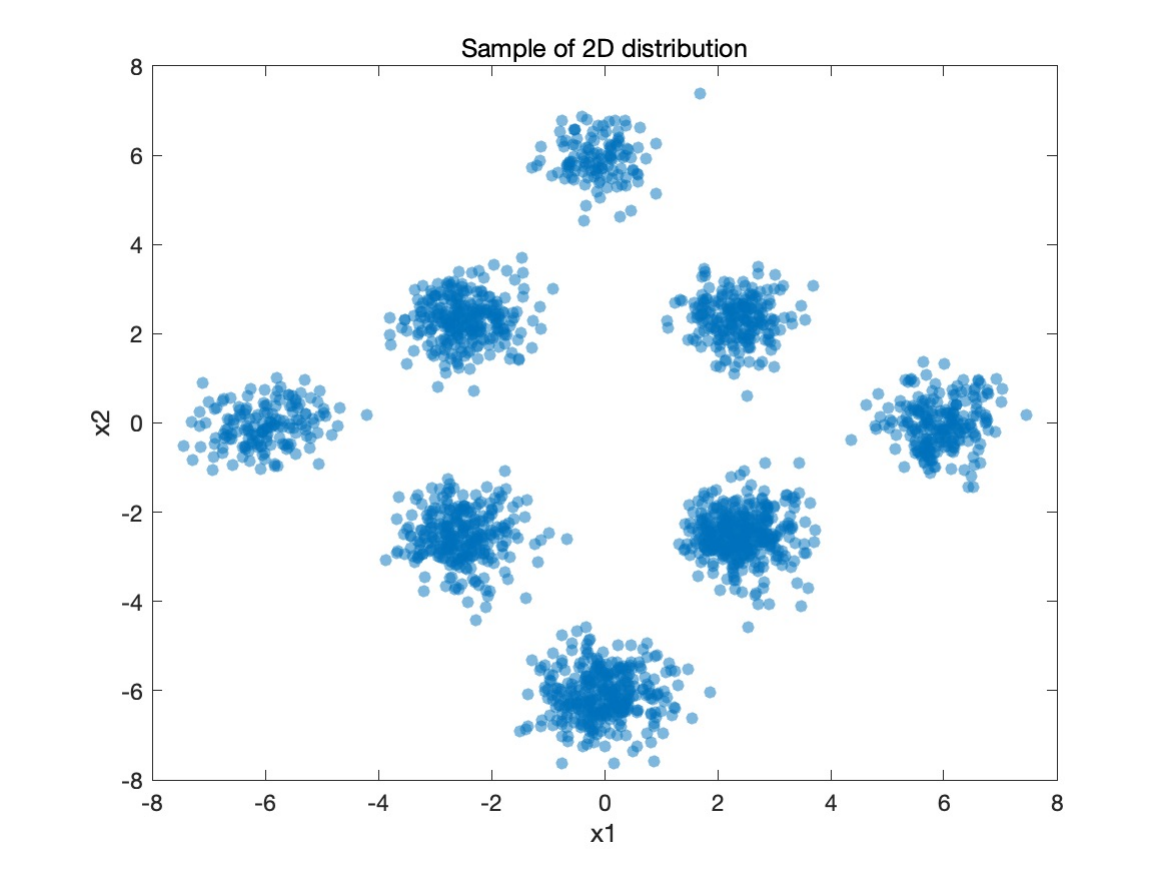}
        \label{fig:sub8}
    \end{subfigure} \\[-9pt]
    
    \end{tabular}
    \caption{Sampling via SFS with different temperatures $\beta$ (from top to bottom: $\beta =1, 2, 5$).}
    \label{fig:2-beta}
\end{figure}

\subsubsection{High-dimensional Gaussian mixture distributions}
In this subsection, we turn to sampling from high-dimensional Gaussian mixture distributions,
\begin{align*}
    p_6(x)
    =
    \theta_1
    \mathcal{N}\left(x ;\alpha_1,\Sigma_1\right)
    +
    \theta_2\,
    \mathcal{N}\left(x ; \alpha_2,\Sigma_2\right),
    \quad \theta_1+\theta_2=1 
    \text { and } 
    0 \leq \theta_1, \theta_2 \leq 1,
\end{align*}
where $\alpha_1, \alpha_2 \in \mathbb{R}^d$ and $\Sigma_1,\Sigma_2 \in \mathbb{R}^{d \times d}$ are the mean and covariance matrix of Gaussian component, respectively.

\textbf{Sampling via SFS with different temperatures $\beta$:}
Given $d=30$ and a uniform time step-size $h=10^{-3}$, we simulate $1000$ independent Markov chains using the SFS algorithm \eqref{SFS-W2-eq:Euler-scheme-no-mc} with different temperatures $\beta$.
By setting $\alpha_1=-6\mathbf{1}_d,\,\alpha_2=8\mathbf{1}_d,\,\Sigma_1=\Sigma_2=\tfrac{1}{4}\mathbf{I}_d$ and $\theta_1=\theta_2=\tfrac{1}{2}$,
Figure \ref{fig:gmd_30_beta} (a) displays the true contour plots and Figure \ref{fig:gmd_30_beta} (b)-(d) present scatter plots of SFS with different temperatures $\beta = 1, 2, 5$. 
As revealed by Figure \ref{fig:gmd_30_beta} (b), sampling via the SFS algorithm \eqref{SFS-W2-eq:Euler-scheme-no-mc} with $\beta=1$ exhibits mode collapse.
By contrast, the SFS algorithm with higher temperatures $\beta=2,5$ gives a much better performance. To be precise, high-quality samples are generated, successfully capturing the two modes of the target distribution, as evidently shown in Figure \ref{fig:gmd_30_beta} (c)-(d).
\begin{figure}[htbp]
    \centering
    \begin{tabular}{@{\hspace{-10pt}}c@{\hspace{-10pt}}c@{}}

    \begin{subfigure}{0.45\textwidth}
        \includegraphics[width=\linewidth]{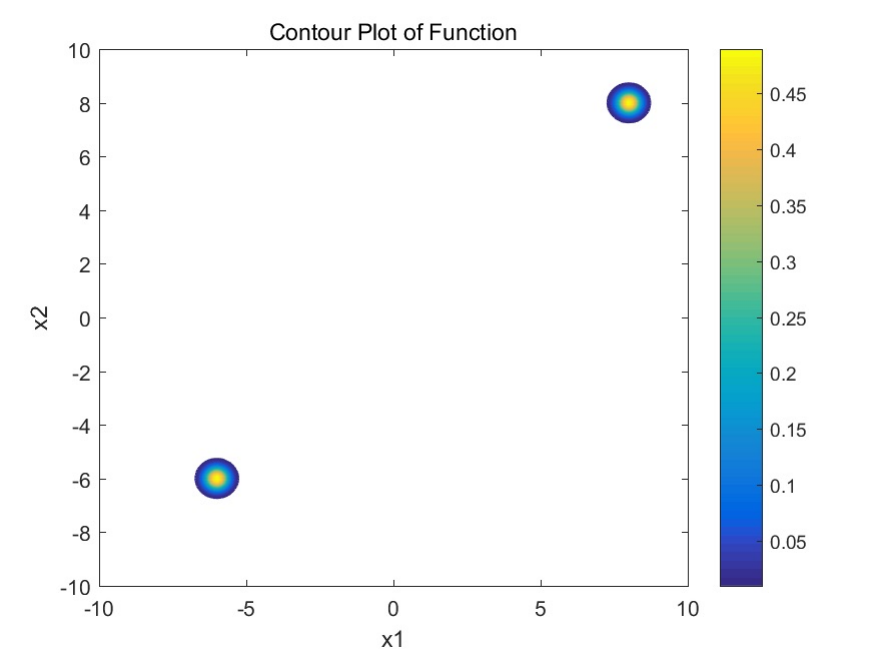} 
        \caption{True contour plot}
    \end{subfigure} &
    \begin{subfigure}{0.45\textwidth}
        \includegraphics[width=\linewidth]{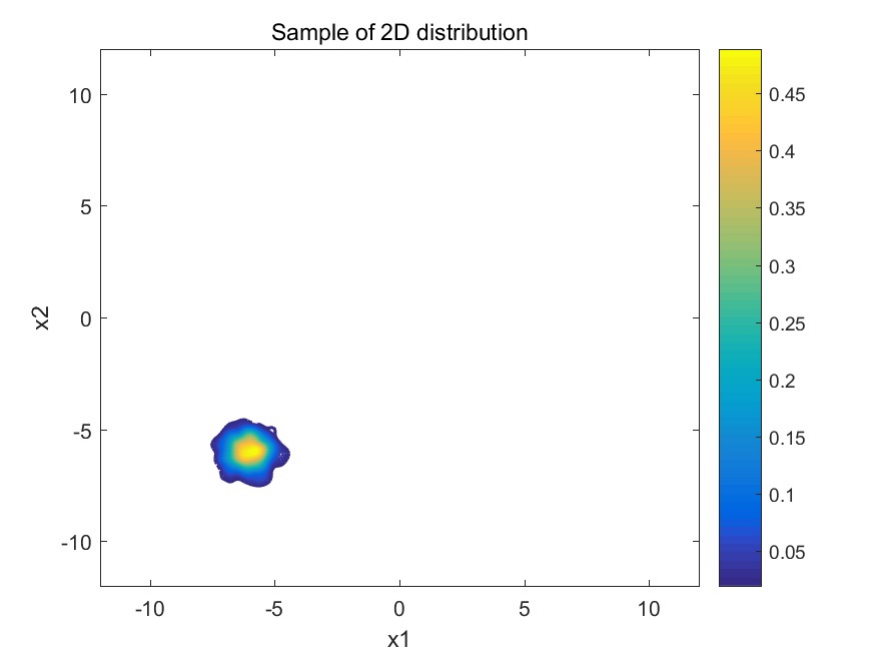}
        \caption{$\beta=1$}
    \end{subfigure} \\
    \\[-10pt]
    \begin{subfigure}{0.45\textwidth}
        \includegraphics[width=\linewidth]{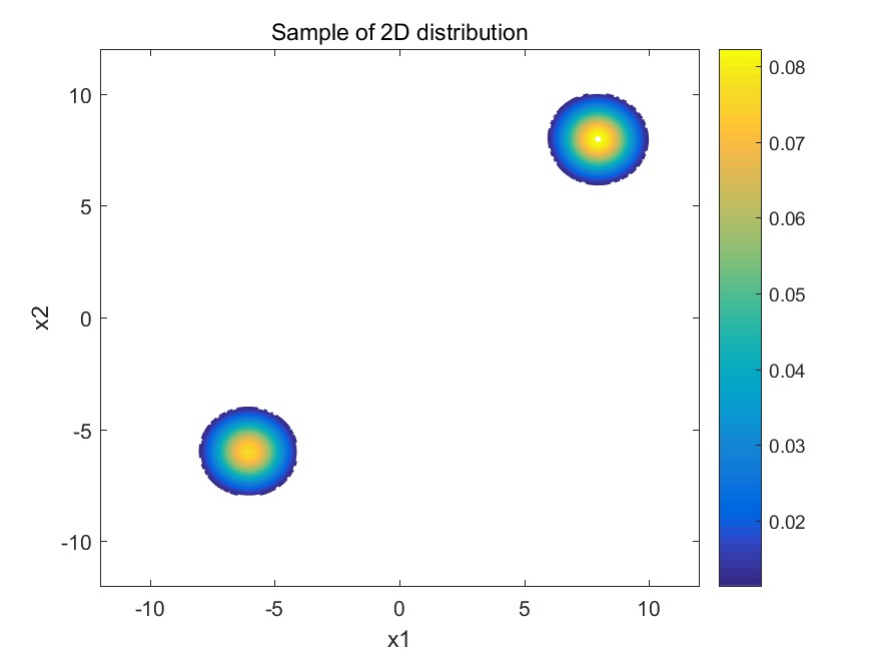}
        \caption{$\beta=2$}
    \end{subfigure} &
    \begin{subfigure}{0.45\textwidth}
        \includegraphics[width=\linewidth]{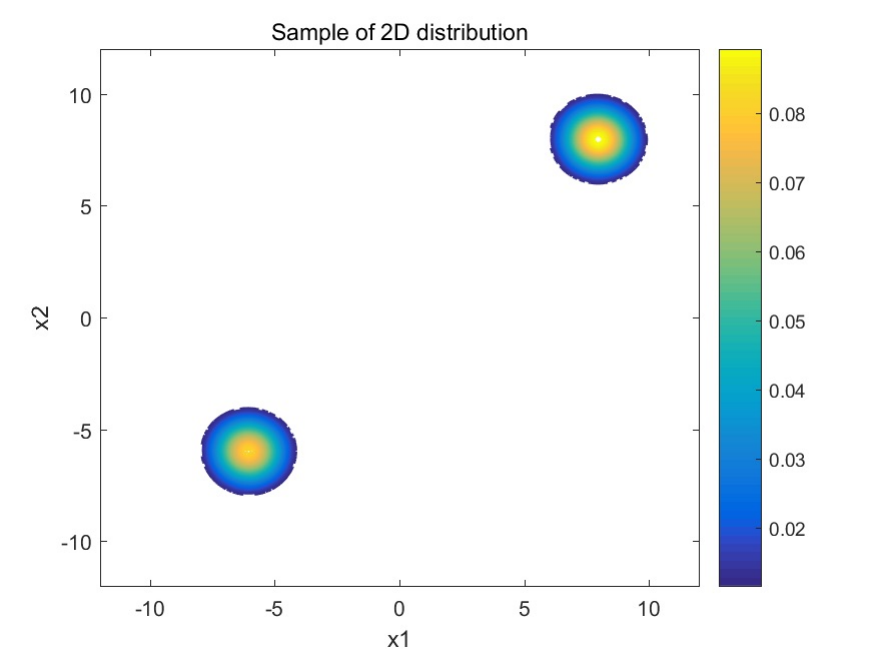}
        \caption{$\beta=5$}
    \end{subfigure}
    
    \end{tabular}
    \caption{Sampling via SFS with different temperatures $\beta = 1, 2, 5$.}
    \label{fig:gmd_30_beta}
\end{figure}

\textbf{Sampling via SFS and Langevin algorithms:}
By fixing the dimension $d=30$ and $\beta=1$, $\theta_1=\tfrac{3}{4},\theta_2=\tfrac{1}{4}$, we compare the SFS algorithm \eqref{SFS-W2-eq:Euler-scheme-no-mc} { at a terminal time $T=1$} with the traditional overdamped LMC algorithm \eqref{SFS-W2-eq:ULA} and with underdamped LMC algorithm \eqref{SFS-W2-eq:under-EM} { both at a terminal time $T=10$,} in the context of high-dimensional sampling. 
For a fixed step-size $h=10^{-3}$, we simulate $1000$ independent Markov chains.
%
In Figure \ref{figure:lmc_ulmc_diff_cov}, the PDF obtained via SFS is plotted using red circles, while those computed by the overdamped LMC and underdamped LMC are depicted using blue stars and green dashed line, respectively. For reference, the exact density curves are also plotted using black solid lines.

%
As clearly illustrated, when the distance between two means of the Gaussian components increases further (e.g., $\alpha_1=-4\cdot\mathbf{1}_d,-6\cdot\mathbf{1}_d,\,\alpha_2=4\cdot\mathbf{1}_d,6\cdot\mathbf{1}_d$), but with different covariances (e.g., $\Sigma_1=\tfrac{1}{5} \mathbf{I}_d,\ \Sigma_2=\tfrac{4}{5} \mathbf{I}_d$), samples from SFS still accurately capture the  multimodality of the target distribution.
By contrast, both the overdamped and underdamped LMC  collapse on one mode, as shown in Figure \ref{figure:lmc_ulmc_diff_cov}.

\textbf{Convergence rate:}
To test the convergence rates of the SFS algorithm \eqref{SFS-W2-eq:Euler-scheme-no-mc}, we fix $\beta=1$ and run the scheme using different step-sizes $h\in\{2^{-5},2^{-6},2^{-7},2^{-8},2^{-9}\}$ in different dimensions $d \in {\{1,2,6,10,20\}}$.
%
%
The exact solution is identified as the numerical one using a fine step-size $h_{ref}=2^{-13}$ and the expectations are approximated by computing averages over $3000$ samples. 
The reference line of order $1$ is also presented, depicted using black solid lines.
Figure \ref{figure:strong_gmd} presents the mean-square convergence rates of the SFS for Gaussian mixture distributions with means $\alpha_1 =-\mathbf{1}_d, -\tfrac{1}{2}\cdot\mathbf{1}_d,~\alpha_2=\mathbf{1}_d, \tfrac{1}{2}\cdot\mathbf{1}_d$ and  identical covariance matrices $\Sigma_1 = \Sigma_2 = \tfrac{4}{5}\mathbf{I}_d$.
From Figure \ref{figure:strong_gmd}, one can observe a mean-square convergence rate close to order $1$, {confirming the previous findings concerning the error estimates for the time discretization.}
\begin{figure}[htbp]
    \centering
    \begin{subfigure}{0.45\linewidth} 
        \includegraphics[width=\linewidth]{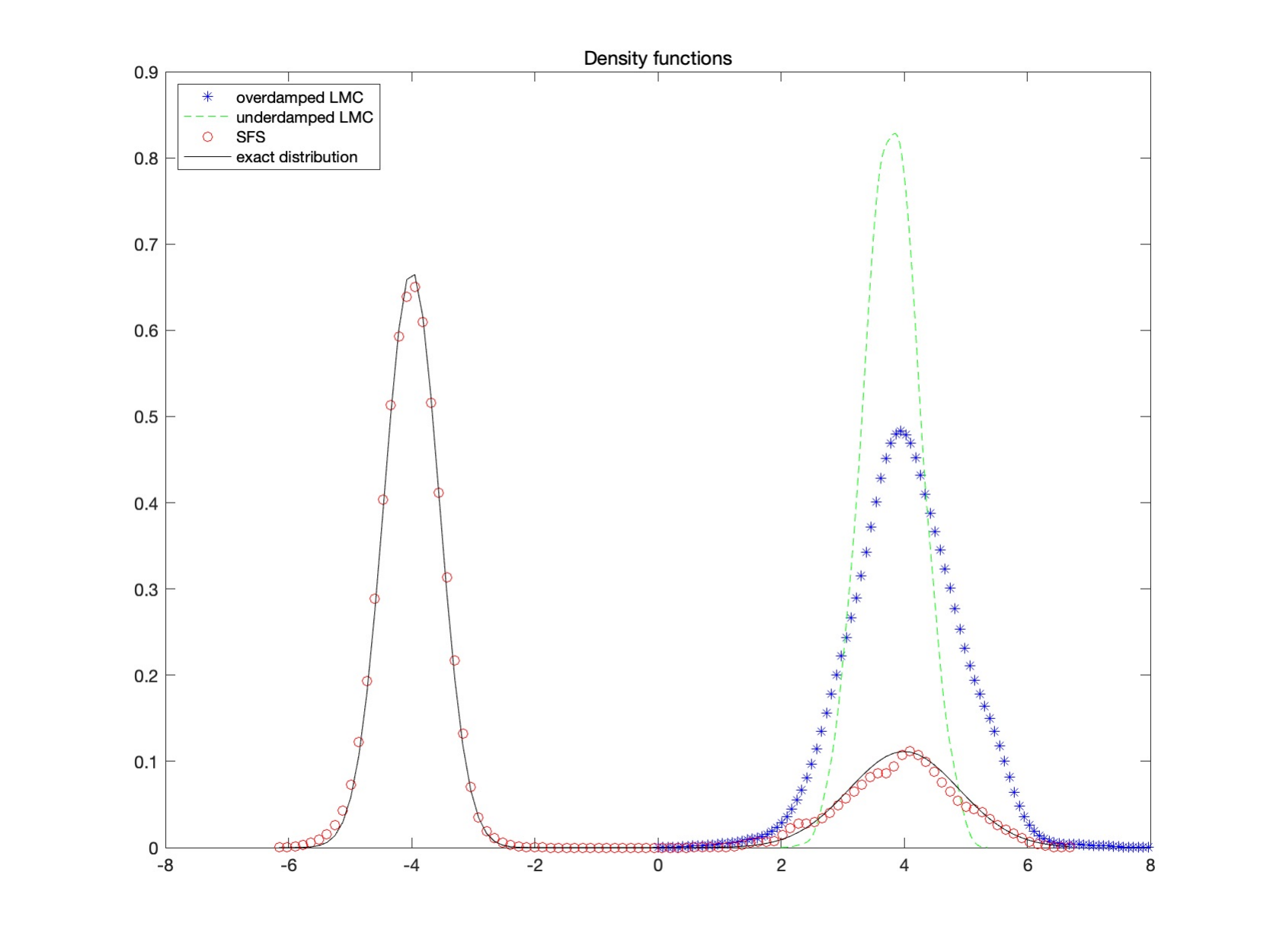} 
        \caption{\centering$\begin{aligned}
            &\alpha_1=-4\cdot\mathbf{1}_d,\ \alpha_2=4\cdot\mathbf{1}_d,\\
            &\Sigma_1=\tfrac{1}{5} \mathbf{I}_d,\ \Sigma_2=\tfrac{4}{5} \mathbf{I}_d
        \end{aligned}$}	
    \end{subfigure}
    \hspace{-0.5cm}
    \begin{subfigure}{0.45\linewidth}
        \includegraphics[width=\linewidth]{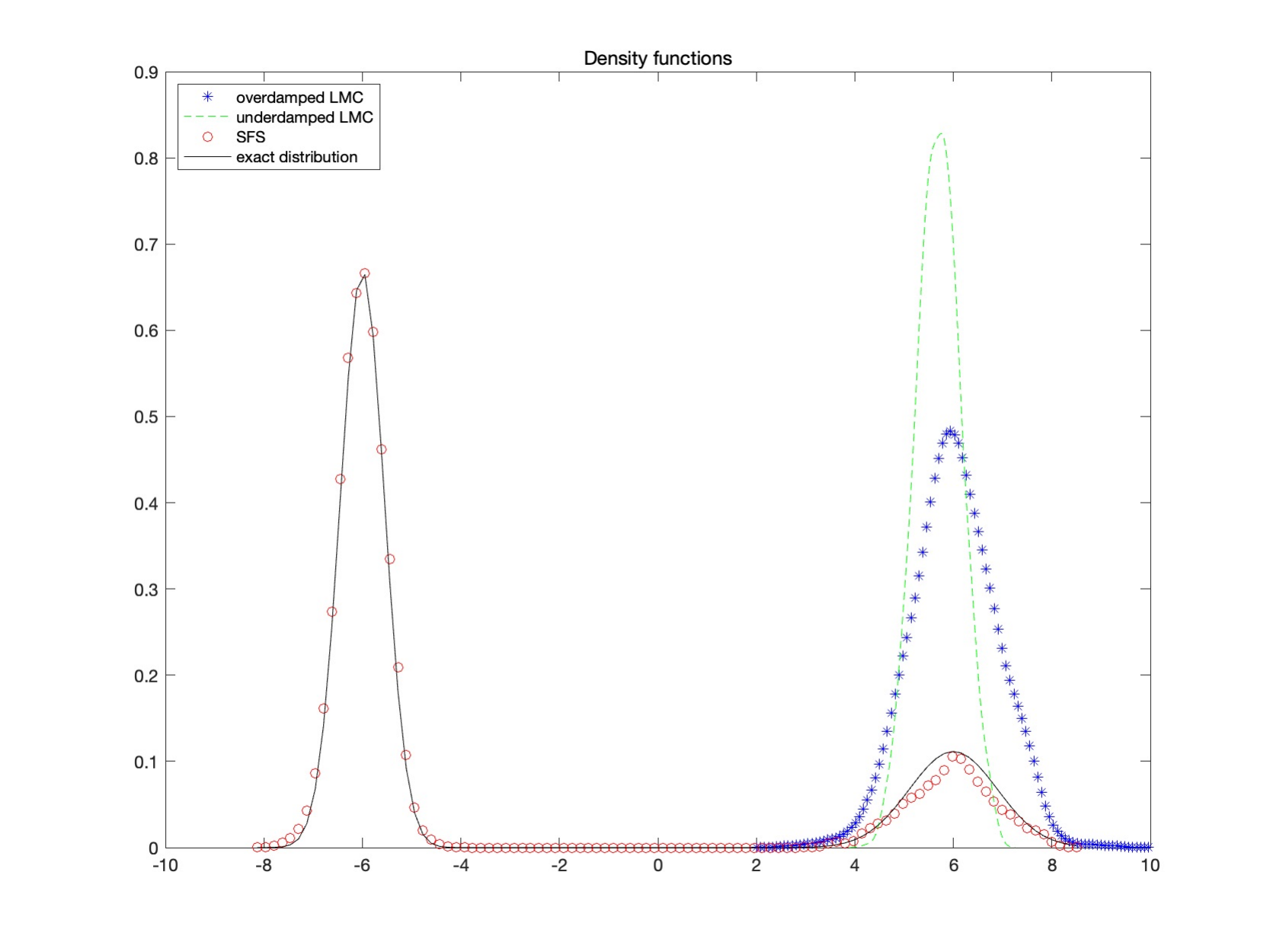}
        \caption{\centering$\begin{aligned}
            &\alpha_1=-6\cdot\mathbf{1}_d,\ \alpha_2=6\cdot\mathbf{1}_d,\\
            &\Sigma_1=\tfrac{1}{5} \mathbf{I}_d,\ \Sigma_2=\tfrac{4}{5} \mathbf{I}_d
        \end{aligned}$}	
    \end{subfigure}
    \caption{Probability density of the first component of the Gaussian mixture distribution.}
    \label{figure:lmc_ulmc_diff_cov}
\end{figure}

\begin{figure}[htbp]
	\centering
	\begin{subfigure}{0.45\linewidth}
		\includegraphics[width=\linewidth]{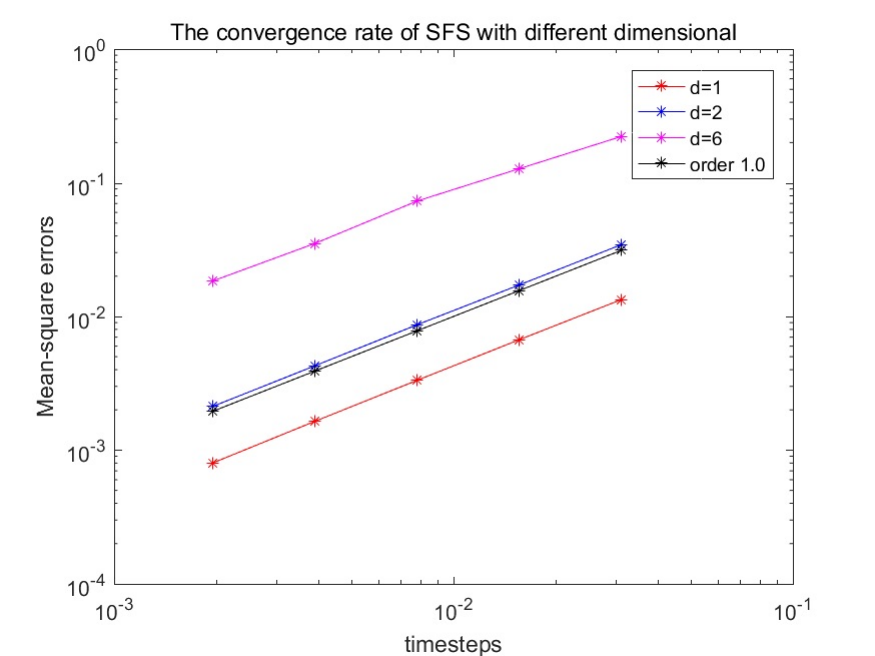}
		\caption{{ $\alpha_1 =-\mathbf{1}_d,~
        \alpha_2 =\mathbf{1}_d$
        \newline
        $\Sigma_1 = \Sigma_2 = \tfrac{4}{5}\mathbf{I}_d$
        }}
		
	\end{subfigure}
	\hspace{-0.5cm}
	\begin{subfigure}{0.45\linewidth}
	\includegraphics[width=\linewidth]{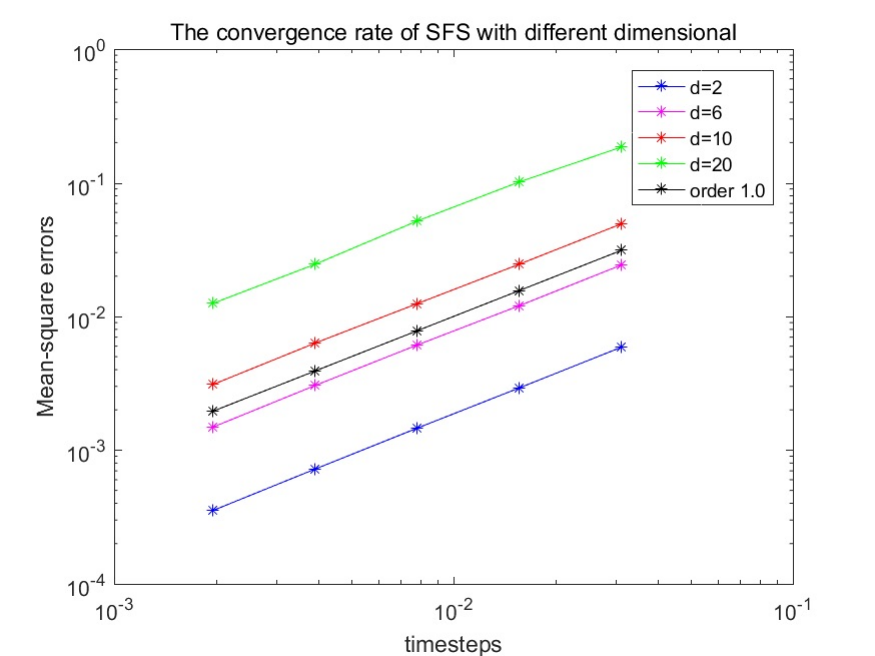}
		\caption{{ $\alpha_1 =-\tfrac{1}{2}\cdot\mathbf{1}_d,~
        \alpha_2 =\tfrac{1}{2}
        \cdot\mathbf{1}_d$
        \newline
        $\Sigma_1 = \Sigma_2 = \tfrac{4}{5}\mathbf{I}_d$
        }}
		
	\end{subfigure}
	\centering
	\caption{Mean-square convergence rates of SFS algorithms for the
 Gaussian mixture distributions.}
\label{figure:strong_gmd}
\end{figure}

\subsection{Bayesian ridge regression}
To illustrate the versatility of SFS in applications, we further examine its performance in a Bayesian ridge regression model,
$$
y=X \eta^*+\varepsilon,
\quad i=1, \ldots, n,
$$
where $X \in \mathbb{R}^{n \times d}$ denotes the covariance matrix, $y \in \mathbb{R}^n$ the response variable, $\eta^*=\left(\eta_1^*, \ldots, \eta_d^*\right)^{T} \in \mathbb{R}^d$ the vector of underlying regression coefficients, and $\varepsilon \sim N\left(0, \sigma_1 \mathbf{I}_n\right)$ the noise. We assume a prior for the coefficients as $\eta \sim N\left(0, \sigma_2 \mathbf{I}_d\right)$, leading to a posterior distribution of
$$
P(\eta \mid y, X) 
\propto 
\exp \left(
-\left(
\frac{\|y-X \eta\|^2}
{2 \sigma_1^2}
+
\frac{\|\eta\|^2}
{2 \sigma_2^2}
\right)
\right).
$$
In our experiments, we set $n = d $ and $X =\mathbf{I}_d$ for simplicity.

%
\textbf{Convergence rate:}
To test the convergence rate, we assign $\beta=1$ and run the SFS algorithm \eqref{SFS-W2-eq:Euler-scheme-mc} for the SDEs in different dimensions $d \in {\{1,2,6\}}$, using different step-sizes $h\in\{2^{-5},2^{-6},2^{-7},2^{-8},2^{-9}\}$.
We consider two cases: $\sigma_1=\sigma_2=0.03$ and $\sigma_1=\sigma_2=0.01 $.
%
%
The exact solutions are identified as the numerical ones using a fine step-size $h=2^{-13}$. 
The reference lines of slope $1$ are also presented, depicted as black solid lines. 
From Figure \ref{figure:strong_brr}, it is easy to detect the mean-square convergence rate close to order $1$, matching the theoretical results.
%

\begin{figure}[htbp]
	\centering
	\begin{subfigure}{0.45\linewidth}
		\includegraphics[width=\linewidth]{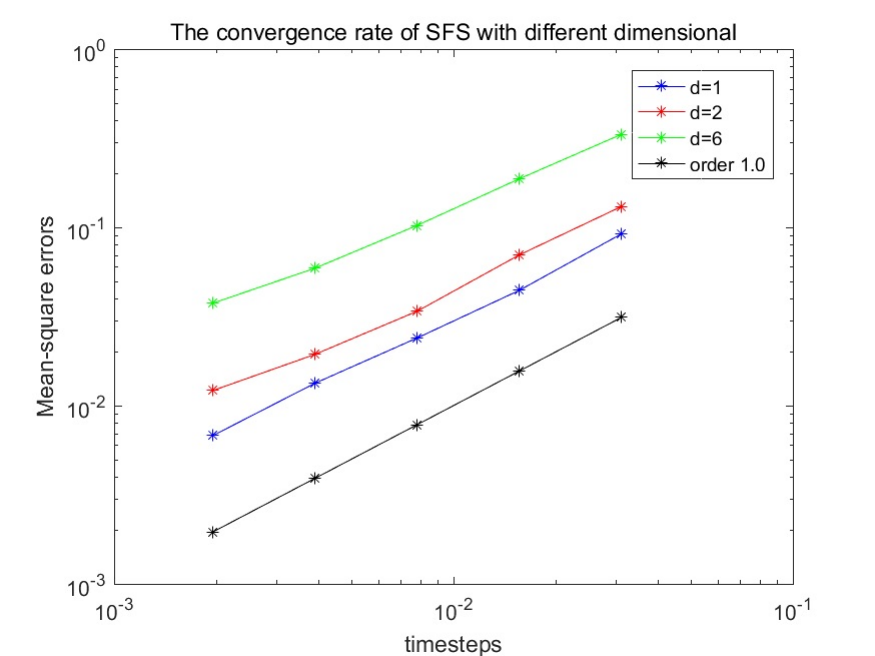}
		\caption{$\sigma_1=\sigma_2=0.03 $}
		
	\end{subfigure}
	\hspace{-0.5cm} 
	\begin{subfigure}{0.45\linewidth}	\includegraphics[width=\linewidth]{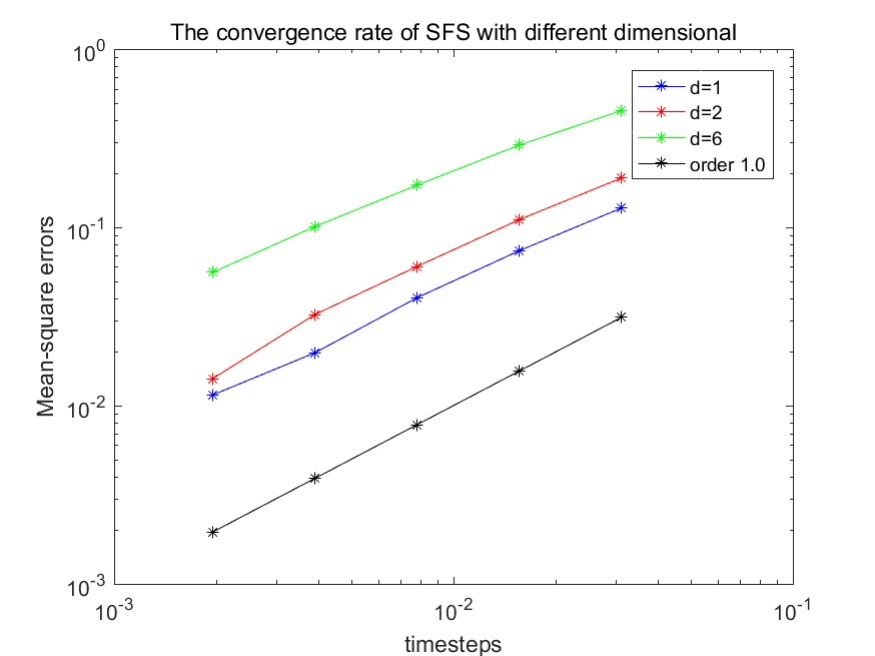}
		\caption{$\sigma_1=\sigma_2=0.01 $}
		
	\end{subfigure}
	\caption{Mean-square convergence rates of SFS algorithm for the
    Bayesian ridge regression.}
        \label{figure:strong_brr}
\end{figure}

\section{Conclusion}
In this work, we provide enhanced error bounds in the $L^2$-Wasserstein distance for two variants of Schr\"odinger-F\"ollmer samplers. 
By carrying out more delicate error estimates, {we derive an enhanced convergence rate of order $\mathcal{O}({ {h}})$ for the time discretization of the Schr\"odinger-F\"ollmer diffusion under certain smoothness conditions on the drift,} significantly improving the existing convergence rate of order $\mathcal{O}(\sqrt{h})$ obtained in \cite{MRjiao}. 
As ongoing projects \cite{Wang2025randomization,Lin2025SRK}, we propose more efficient (higher-order) sampling algorithms based on such Schr\"odinger-F\"ollmer diffusion process with temperatures.

\appendix

\section{Proof of Proposition \ref{SFS-W2-prop:g-satisfy-condition}}
\label{SFS-W2-appendix:g-satisfy-condition}
\begin{proof}
\textbf{Proof.}
Under the given assumptions, the function $g_{\beta}$ is of class $\mathcal{C}^3$, and moreover, $g_{\beta}, \nabla g_{\beta},\nabla^2 g_{\beta}$ are Lipschitz continuous. Then there exists $L_g>0$ such that, for any $ x,y \in \mathbb{R}^d,$
\begin{align}
\label{SFS-W2-eq:g-nabla-is-lipschitz}
        \|\nabla^k g_{\beta}(x)
        -
        \nabla^k g_{\beta}(y)\| 
        \leq &
        L_g
        \|x-y\|,
        \quad
        k=0,1,2.
\end{align}
{ 
For any $x,\alpha_1,\alpha_2, \alpha_3\in \mathbb{R}^d$ satisfying $\|\alpha_1\|=\|\alpha_2\|=\|\alpha_3\|=1$,
in light of definitions of the operator norm \eqref{SFS-W2-eq:operator-norm-def} and the directional derivatives \eqref{SFS-W2-eq:def-dir-der} one can deduce} 
\begin{align*}
        \|\nabla Q_{1-t}^{\beta} g_{\beta}(x)\|
        & =
        \sup_{\|\alpha_1\|=1}
        \big\|
        \nabla_{\alpha_1} Q_{1-t}^{\beta} g_{\beta}(x)
        \big\|\\
        & =
        \sup_{\|\alpha_1\|=1}
        \left\|
        \lim_{\varepsilon \rightarrow 0} 
        \frac{Q_{1-t}^{\beta}g_{\beta}\left(x+\varepsilon \alpha_1\right)
        -
        Q_{1-t}^{\beta}g_{\beta}(x)}
        {\varepsilon}
        \right\|\\
        & \leq
        \sup_{\|\alpha_1\|=1}
        \lim_{\varepsilon \rightarrow 0}
        \frac{
        \mathbb{E}
        \big[
        \|g_{\beta}(x+\varepsilon \alpha_1 +\sqrt{(1-t)\beta}\,\xi)
        -
        g_{\beta}
        (x+\sqrt{(1-t)\beta}\,\xi)\|
        \big]}
        {\varepsilon}\\
        & \leq
        L_g,
\end{align*}
\begin{align*}
        \|\nabla^2 Q_{1-t}^{\beta} g_{\beta}(x)\|
        & =
        \sup_{
        \|\alpha_1\|=\|\alpha_2\|=1}
        \big\|
        \nabla_{\alpha_2}
        \nabla_{\alpha_1} 
        Q_{1-t}^{\beta} g_{\beta}(x)
        \big\|\\
        & =
        \sup_{
        \|\alpha_1\|=\|\alpha_2\|=1}
        \big\|
        \nabla_{\alpha_2}
        \big\langle
        \nabla 
        Q_{1-t}^{\beta} g_{\beta}(x),
        \alpha_1
        \big \rangle
        \big\|\\
        & =
        \sup_{
        \|\alpha_1\|=\|\alpha_2\|=1}
        \left\|
        \lim_{\varepsilon \rightarrow 0} 
        \frac{
        \big\langle
        \nabla 
        Q_{1-t}^{\beta}
        g_{\beta}(x+\varepsilon \alpha_2),
        \alpha_1
        \big \rangle
        - 
        \big\langle
        \nabla
        Q_{1-t}^{\beta}g_{\beta}(x),
        \alpha_1
        \big \rangle}
        {\varepsilon}
        \right\|\\
        & \leq
        \sup_{\|\alpha_2\|=1}
        \lim_{\varepsilon \rightarrow 0}
        \frac{
        \big\|\nabla
        Q_{1-t}^{\beta} g_{\beta}(x+\varepsilon \alpha_2)
        -
        \nabla
        Q_{1-t}^{\beta} g_{\beta}(x)
        \big\|} 
        {\varepsilon}\\
        & \leq
        \sup_{\|\alpha_2\|=1}
        \lim_{\varepsilon \rightarrow 0}
        \frac{
        \mathbb{E}
        \big[
        \|\nabla g_{\beta}(x+\varepsilon \alpha_2+\sqrt{(1-t)\beta}\,\xi)
        -
        \nabla g_{\beta}(x+\sqrt{(1-t)\beta}\,\xi)\|
        \big]}
        {\varepsilon}\\
        & \leq
        L_g,
\end{align*}
and
\begin{align*}
        &
        \|\nabla^3 Q_{1-t}^{\beta} g_{\beta}(x)\|\\
        & =
        \sup_{
        \|\alpha_1\|
        =\|\alpha_2\|=\|\alpha_3\|
        =1}
        \big\|
        \nabla_{\alpha_3}
        \nabla_{\alpha_2}
        \nabla_{\alpha_1}
        Q_{1-t}^{\beta} g_{\beta}(x)
        \big\|\\
        & =
        \sup_{
        \|\alpha_1\|
        =\|\alpha_2\|=\|\alpha_3\|
        =1}
        \bigg\|
        \nabla_{\alpha_3}
        \Big \langle
        \nabla
        \Big(
        \big \langle
        \nabla
        Q_{1-t}^{\beta} g_{\beta}(x),
        \alpha_1
        \big\rangle
        \Big),
        \alpha_2
        \Big \rangle
        \bigg\|\\
        & =
        \sup_{
        \|\alpha_1\|
        =\|\alpha_2\|=\|\alpha_3\|
        =1}
        \left\|
        \lim_{\varepsilon \rightarrow 0} 
        \frac{
        \Big \langle
        \nabla
        \Big(
        \big \langle
        \nabla
        Q_{1-t}^{\beta} g_{\beta}(x+\varepsilon \alpha_3),
        \alpha_1
        \big\rangle
        \Big),
        \alpha_2
        \Big \rangle
        - 
        \Big \langle
        \nabla
        \Big(
        \big \langle
        \nabla
        Q_{1-t}^{\beta} g_{\beta}(x),
        \alpha_1
        \big\rangle
        \Big),
        \alpha_2
        \Big \rangle}
        {\varepsilon}
        \right\|\\
        & \leq
        \sup_{
        \|\alpha_1\|=\|\alpha_3\|=1}
        \lim_{\varepsilon \rightarrow 0} 
        \frac{
        \left\|
        \nabla 
        \Big(
        \big \langle
        \nabla
        Q_{1-t}^{\beta} g_{\beta}(x+\varepsilon \alpha_3),
        \alpha_1
        \big\rangle
        \Big)
        -
        \nabla
        \Big(
        \big \langle
        \nabla
        Q_{1-t}^{\beta} g_{\beta}(x),
        \alpha_1
        \big\rangle
        \Big)
        \right\|}
        {\varepsilon}\\
        & \leq
        \sup_{\|\alpha_3\|=1}
        \lim_{\varepsilon \rightarrow 0}
        \frac{
        \big\|\nabla^2
        Q_{1-t}^{\beta} g_{\beta}(x+\varepsilon \alpha_3)
        -
        \nabla^2
        Q_{1-t}^{\beta} g_{\beta}(x)
        \big\|} 
        {\varepsilon}\\
        & \leq
        \sup_{\|\alpha_3\|=1}
        \lim_{\varepsilon \rightarrow 0}
        \frac{
        \mathbb{E}
        \big[
        \|\nabla^2 g_{\beta}(x+\varepsilon \alpha_3+\sqrt{(1-t)\beta}\,\xi)
        -
        \nabla^2 g_{\beta}(x+\sqrt{(1-t)\beta}\,\xi)\|
        \big]}
        {\varepsilon}\\
        & \leq
        L_g.
\end{align*}
Since $g_{\beta}$ has a lower bound greater than $0$, i.e.,
$
    g_{\beta} \geq \rho >0,
$
one can infer that for any $x, v_1 \in \mathbb{R}^d$ and $t\in [0,1]$,
\begin{align*}
        \|D f_{\beta}(x,t)v_1\|
        = &
        \Bigg\|
        \frac{
        \beta
        \nabla^2
        Q_{1-t}^{\beta} g_{\beta}(x)
        Q_{1-t}^{\beta} g_{\beta}(x)
        -
        \beta
        \nabla 
        Q_{1-t}^{\beta} g_{\beta}(x)
        \nabla Q_{1-t}^{\beta} g_{\beta}(x)^T
        }
        { {(Q^{\beta}_{1-t} g_{\beta}(x))^2}}
        \cdot v_1
        \Bigg\|\\
        \leq &
        \Bigg\|
        \frac{
        \beta
        \nabla^2
        Q_{1-t}^{\beta} g_{\beta}(x)}
        {Q_{1-t}^{\beta} g_{\beta}(x)}
        \cdot
        v_1
        \Bigg\|
        +
        \Bigg\|
        \frac{
        \beta
        \nabla 
        Q_{1-t}^{\beta} g_{\beta}(x)
        \nabla Q_{1-t}^{\beta} g_{\beta}(x)^T}
        { {(Q^{\beta}_{1-t} g_{\beta}(x))^2}}
        \cdot
        v_1
        \Bigg\|\\
        \leq &
        \left(
        1+
        \frac{L_g}{\rho}
        \right)
        \frac{L_g \beta}{\rho}        
        \|v_1\|. 
\end{align*}
In the same manner, we show, for any $ x, v_1, v_2\in \mathbb{R}^d$ and $t\in [0,1]$,
\begin{align*}
        \|D^2 f_{\beta}(x,t)(v_1,v_2)\|
        & \leq 
        \Bigg\|
        \frac{
        \beta
        \nabla
        \big(
        \nabla^2
        Q_{1-t}^{\beta} g_{\beta}(x)v_1
        \big)
        v_2
        }
        {Q_{1-t}^{\beta} g_{\beta}(x)}
        \Bigg\|
        +
        \Bigg\|
        \frac{
        \beta
        \big(
        \nabla^2 
        Q_{1-t}^{\beta} g_{\beta}(x)v_1
        \nabla Q_{1-t}^{\beta} g_{\beta}(x)^T
        \big) 
        v_2}
        { {(Q^{\beta}_{1-t} g_{\beta}(x))^2}}
        \Bigg\|\\
        & \quad +
        \Bigg\|
        \frac{
        \beta
        \nabla 
        \big(
        \nabla Q_{1-t}^{\beta} g_{\beta}(x)
        \nabla Q_{1-t}^{\beta} g_{\beta}(x)^T v_1
        \big)
        v_2}
        { {(Q^{\beta}_{1-t} g_{\beta}(x))^2}}
        \Bigg\|\\
        & \quad +
        2\beta
        \Bigg\|
        \frac{
        \big(
        \nabla Q_{1-t}^{\beta} g_{\beta}(x)
        \nabla Q_{1-t}^{\beta} g_{\beta}(x)^T v_1
        \nabla Q_{1-t}^{\beta} g_{\beta}(x)^T 
        \big)
        v_2}
        { {(Q^{\beta}_{1-t} g_{\beta}(x))^3}}
        \Bigg\|\\
        & \leq 
        \frac{
        \beta
        \big\|
        \nabla^3 Q_{1-t}^{\beta} g_{\beta}(x)
        \big)
        \big\|}
        {\rho}
        \cdot \|v_1\|
        \cdot \|v_2\|
        \\
        & \quad +
        \frac{
        \beta
        \big\|
        \nabla^2 Q_{1-t}^{\beta} g_{\beta}(x)
        \big\|
        \cdot
        \big\|
        \nabla Q_{1-t}^{\beta} g_{\beta}(x)
        \big\|}
        {\rho^2}
        \cdot \|v_1\|
        \cdot \|v_2\|
        \\
        & \quad +
        \frac{
        \beta
        \big\|
        \nabla 
        \big(
        \nabla Q_{1-t}^{\beta} g_{\beta}(x)
        \nabla 
        Q_{1-t}^{\beta}
        g_{\beta}(x)^T v_1
        \big)
        \big\|}
        {\rho^2}
        \cdot \|v_2\|\\
        & \quad +
        2 \beta
        \frac{
        \big\|
        \nabla Q_{1-t}^{\beta} g_{\beta}(x) 
        \big\|^3}
        {\rho^3}
        \cdot \|v_1\|
        \cdot \|v_2\|\\
        & \leq
        \left(
        1+
        \frac{3L_g}{\rho}
        +
        \frac{2 L_g^2}{\rho^2}
        \right)
        \frac{L_g \beta}{\rho}
        \|v_1\|
        \cdot\|v_2\|. 
\end{align*}
Regarding $\|\partial_t f_{\beta}(x,t)\|$,
using the fact that $g_{\beta}, \nabla g_{\beta}$ are Lipschitz and the definition of the operator norm \eqref{SFS-W2-eq:operator-norm-def}, one can similarly get 
\begin{align}
\label{SFS-W2-eq:g_nabla_g_bound}
    \|\nabla g_{\beta}(x)\|
    \leq 
    L_g,
    \quad 
    \|\nabla^2 g_{\beta}(x)\|
    \leq 
    L_g,
    \quad \forall x \in \mathbb{R}^d.
\end{align}
Combining the triangle inequality and \eqref{SFS-W2-eq:g_nabla_g_bound}, we arrive at
\begin{align*}
        \big\|
        \partial_t f_{\beta}(x, t)
        \big\|
        & =  
        \Bigg\|
        \frac{
        -
        \beta^2
        \mathbb{\,E}
        [\nabla^2 g_{\beta}(x+\sqrt{(1-t)\beta}\,\xi)\,\xi]
        \mathbb{E}
        [g_{\beta}(x+\sqrt{(1-t)\beta}\,\xi)]
        }
        {2
        \big(
        \mathbb{E}
        [g_{\beta}(x+\sqrt{(1-t)\beta}\,\xi)]
        \big)^2
        \sqrt{(1-t)\beta}
        }\\
        & \quad +
        \frac{
        \beta^2
        \mathbb{\,E}
        [\nabla g_{\beta}(x+\sqrt{(1-t)\beta}\,\xi)]
        \mathbb{E}
        [
        \langle
        \nabla g_{\beta}(x+\sqrt{(1-t)\beta}\,\xi),
        \xi
        \rangle]
        }
        {2
        \big(
        \mathbb{E}
        [g_{\beta}(x+\sqrt{(1-t)\beta}\,\xi)]
        \big)^2
        \sqrt{(1-t)\beta}}
        \Bigg\|\\
        & \leq  
        \frac{L_g \beta^2}
        {2 \rho \sqrt{(1-t)\beta}}
        \mathbb{E}
        \big[\|\xi\|\big]
        + 
        \frac{L_g^2
        \beta^2}
        {2 \rho^2 \sqrt{(1-t)\beta}}
        \mathbb{E}
        \big[\|\xi\|\big]\\
        & \leq
        \Big(
        1+
        \frac{L_g}
        {\rho}
        \Big)
        \,
        \frac{L_g\beta^{3/2}}
        {2 \rho }
        \,
        d^\frac{1}{2}
        \frac{1}{\sqrt{1-t}}.
\end{align*}
The proof is completed.
\end{proof}

\section{Proof of Proposition \ref{SFS-W2-prop:mixing-time}}
\label{SFS-W2-appendix:mixing-time}
\begin{proof}
\textbf{Proof.}
Given an error tolerance $\epsilon>0$, Theorem \ref{SFS-W2-thm:main-rerult-mc} { tells that, for $M$ being large enough and $h$ being small enough such that
\begin{align}
\label{SFS-W2-eq:mixing-time-two-term}
    C d h 
    \leq 
    \frac{\epsilon}{2}, 
    \quad 
    C \sqrt{\frac{d}{M}}
    \leq 
    \frac{\epsilon}{2},
\end{align}
one can arrive at}
\begin{align*}
        \mathcal{W}_2
        \big(
        \mathcal{L}aw
        (
        \widetilde{Y}^M_1
        ), 
        \mu
        \big)
        \leq 
        \epsilon.
\end{align*}
{ 
Rearranging the first inequality of \eqref{SFS-W2-eq:mixing-time-two-term} gives  
\begin{align*}
    N=\frac{1}{h}
    \geq 
    \frac{2Cd}{\epsilon}.
\end{align*}}
The second part of inequality \eqref{SFS-W2-eq:mixing-time-two-term} requires
\begin{align*}
    M 
    \geq 
    \frac{4C^2d}{\epsilon^2}.
\end{align*}
{Therefore, the number of evaluations of $g_{\beta}$ is of order $\mathcal{O}(\tfrac{d^2}{\epsilon^3}).$}
This completes the proof of the proposition.
\end{proof}

{ 
\section{Proof of Proposition \ref{sfs-w2-prop:ass-v}}
\label{sfs-w2-app:ass-v}
\begin{proof}
\textbf{Proof.}
Since $V \in \mathcal{C}^3$ satisfies \eqref{sff-w2-eq:v-condition} under the given assumptions, it follows that
\begin{align}
\big\|
g_{\beta}(x)
-
g_{\beta}(y)
\big\|
& =
\bigg\|
\int_0^1
\nabla 
g_{\beta}
\big(
y+r(x-y)
\big)
(x-y)
\mathrm{d} r
\bigg\|
\nonumber\\
& \leq
\big\|
\nabla 
g_{\beta}
\big(
y+r(x-y)
\big)
\big\|
\cdot
\|x-y\|
\nonumber\\
& \leq
\sup_{x\in\mathbb{R}^d}
\big\|
\beta^{-1}
x
-
\nabla V(x)
\big\|
\cdot
\exp
\big(
-V(x)
+
\tfrac{\|x\|^2}
{2\beta}
\big)
\cdot
\|x-y\|
\nonumber\\
& \leq
L_g \|x-y\|,
\quad
\forall \, x,y \in \mathbb{R}^d,
\end{align}
where the first equality is obtained by employing the Taylor expansion. In the same manner,
\begin{align}
\big\|
\nabla g_{\beta}(x)
-
\nabla
g_{\beta}(y)
\big\|
& =
\bigg\|
\int_0^1
\nabla^2 
g_{\beta}
\big(
y+r(x-y)
\big)
(x-y)
\mathrm{d} r
\bigg\|
\nonumber\\
& \leq
\big\|
\nabla^2 
g_{\beta}
\big(
y+r(x-y)
\big)
\big\|
\cdot
\|x-y\|
\nonumber\\
& \leq
\sup_{x\in\mathbb{R}^d}
\big\|
\beta^{-1}
x
-
\nabla V(x)
\big\|^2
\cdot
\exp
\big(
-V(x)
+
\tfrac{\|x\|^2}
{2\beta}
\big)
\cdot
\|x-y\|
\nonumber\\
& \quad +
\sup_{x\in\mathbb{R}^d}
\big\|
\beta^{-1}
\mathbf{I}_d
-
\nabla^2 V(x)
\big\|
\cdot
\exp
\big(
-V(x)
+
\tfrac{\|x\|^2}
{2\beta}
\big)
\cdot
\|x-y\|
\nonumber\\
& \leq
L_g \|x-y\|,
\quad
\forall \, x,y \in \mathbb{R}^d.
\end{align}
The Lipschitz continuity of $\nabla^2 
g_{\beta}$ is proved similarly and will not be repeated here for brevity.
The proof is completed.

\end{proof}}
\bibliography{reference}

\end{document}